\newtheorem{Theorem}{Theorem}[section]
\newtheorem{Definition}[Theorem]{Definition}
\newtheorem{Proposition}[Theorem]{Proposition}
\newtheorem{Lemma}[Theorem]{Lemma}
\newtheorem{Corollary}[Theorem]{Corollary}
\newtheorem{Remark}[Theorem]{Remark}
\newtheorem{Hypothesis}[Theorem]{Hypothesis}
\def\oo{\mathaccent23}
\def\R{\mathbb R}
\def\N{\mathbb N}
\def\Z{\mathbb Z}
\def\E{\mathbb E}
\def\P{\mathbb P}
\def\eps{\varepsilon}
\def\ds{\displaystyle}
\newcommand{\esssup}{\operatorname{ess\,sup}}
\newcommand{\Tr}{\operatorname{Tr}}
\newcommand{\one}{1\mkern -4mu\mathrm{l}}
\title[BV functions in Hilbert spaces]{\bf BV functions in Hilbert spaces}
\author[G. Da Prato]{Giuseppe Da Prato}
\address{Scuola Normale Superiore\\
Piazza dei Cavalieri, 7\\ 
56126 Pisa, Italy}
\email{g.daprato@sns.it}
\author[A. Lunardi]{Alessandra Lunardi}
\address{
Dipartimento di Scienze Matematiche, Fisiche e Informatiche \\
Universit\`a di Parma\\
Parco Area delle Scienze, 53/A\\
43124 Parma, Italy}
\email{alessandra.lunardi@unipr.it}
\subjclass[2010]{28C20, 26E15, 49Q15}
\keywords{Infinite dimensional analysis, measures in Hilbert spaces, bounded variation functions.}
\begin{document}

 \begin{abstract}  
We study $BV$ functions in a Hilbert space $X$  endowed with a probability measure $\nu$, assuming that $\nu$ is Fomin differentiable along suitable directions. We establish basic characterizations, and we apply the general theory to relevant examples, including invariant measures of some stochastic PDEs.

 \end{abstract}

\maketitle

 \tableofcontents 
  

\section{Introduction}

In this paper we develop the theory of bounded variation ($BV$) functions  in a separable real Hilbert space $X$ endowed with a Borel probability  measure $\nu$. 
The very definition of $BV$ functions relies on integration by parts formulae; therefore we have to assume that $\nu$ is Fomin differentiable along suitable directions. We recall 
that $\nu$ is Fomin differentiable along $h\in X$ if there exist $\beta \in L^1(X, \nu)$ such that for every $\varphi\in C^1_b(X)$ (the space of the bounded Fr\'echet differentiable functions from $X$ to $\R$ with bounded gradient) we have
$$\int_X \langle \nabla \varphi,h \rangle\,d\nu = \int_X \varphi \,\beta\, d\nu .$$
In this case the function $-\beta$ is called logarithmic derivative of $\nu$ along $h$. 

The subspace of all the elements $h$ such that $\nu$ is Fomin differentiable along $h$ plays an important role in the study of the properties of $\nu$. 
Here we assume that it contains the range of a bounded operator. More precisely, we assume 
that
there exists $R\in \mathcal L(X)$ such that the following hypothesis holds, 
\begin{Hypothesis}
\label{h1'}
  For any  $z\in X$ there exists $v_z\in \cap_{p>1}L^p(X,\nu)$ such that  
\begin{equation}
\label{e1a}
 \int_X\langle R\nabla \varphi,z \rangle\,d\nu = \int_X v_z\varphi  \,d\nu, \quad \varphi\in C^1_b(X). 
\end{equation}
\end{Hypothesis}
This means that $\nu$ is Fomin differentiable along $R^*(X)$, with logarithmic derivatives belonging to all spaces $L^p(X, \nu)$. This hypothesis also let us  introduce Sobolev spaces and generalized gradients, since it allows to show 
that the operator 
$R\nabla : D(R\nabla)= C^1_b(X) \mapsto L^p(X, \nu;X)$ is closable in $L^p(X, \nu)$, for every $p\in [1, +\infty)$. The space $W^{1,p}(X, \nu)$ is defined as the
domain of the closure $M_p$ of such operator. 
Of course, the spaces  $W^{1,p}(X, \nu)$ and the operators $M_p$ depend on $R$. However, since $R$ is fixed once and for all,  we do not emphasize this dependence. 

The assumption that the functions $v_z$ belong to all $L^p $ spaces is made  to have well defined $W^{1,p}$ spaces for every $p$; moreover in the most important and treatable examples we have 
$v_z\in \cap_{p>1}L^p(X,\nu)$ for every $z$. We could assume that $v_z\in  L^{p_0}(X,\nu)$
just for some $p_0>1$ and in this case most of  the statements of the paper should be modified accordingly.  

The Sobolev spaces inherit formula \eqref{e1a} and its variants. In particular, for every $u\in W^{1,p}(X, \nu)$ with $p>1$ we have 
\begin{equation}
\label{parti_introd}
 \int_X u (\langle R\nabla \varphi,z \rangle - v_z\varphi)\,d\nu= - \int_X \langle M_pu, z\rangle \varphi  \,d\nu, \quad z\in X, \;\varphi\in C^1_b(X).
 \end{equation}
This holds also for $p=1$ for certain measures $\nu$ such as Gaussian measures, but in general for   $u\in W^{1,1}(X, \nu)$ the product $uv_z$ may not belong to  $L^1(X,\nu)$, because $v_z\notin L^{\infty}(X, \nu)$ and embedding theorems guaranteeing that $uv_z\in L^1(X,\nu)$ are not available. 

The right hand side of \eqref{parti_introd} can be seen as the negative integral of $\varphi$ with respect to the real measure $m_z:= \langle M_pu, z\rangle  \nu$. The notion of $BV$ function comes from  a generalization of \eqref{parti_introd}: 
 given $u\in L^1(X, \nu)$ such that $uv_z\in L^1(X, \nu)$ for every $z\in X$, we say that $u$ belongs to $BV(X, \nu)$ if there exists a Borel $X$-valued vector measure $m$ such that, setting $m_z(B) := \langle m(B), z\rangle$ for every $z\in X$ and for every Borel set $B\subset X$, we have 
\begin{equation}
\label{defBVintro}
 \int_X u (\langle R\nabla \varphi,z \rangle - v_z\varphi)\,d\nu= - \int_X  \varphi  \,dm_z, \quad z\in X, \;\varphi\in C^1_b(X).
 \end{equation}
Hypothesis \ref{h1'} yields that good vector fields with finite dimensional range, $F(x)= \sum_{i=1}^n f_i(x) z_i$ with $f_i\in C^1_b(X)$ and $z_i\in X$, belong to the domain of the adjoint operator $M^*_p$ for $p>1$, and $M^*_pF(x) = \sum_{i=1}^n (\langle R\nabla f_i,z_i \rangle - v_{z_i}f_i)$. Since $M_p$ plays the role of a (generalized, stretched) gradient, $M^*_p$ plays the role of the negative divergence. 
Denoting by $\widetilde{C}^1(X, X)$ the space of such vector fields, the total variation of the $X$-valued measure $M_pu \,\nu$ is given by 
$$V(u) :=    \sup \bigg\{ \int_X u \,M^*_p F \, d\nu: \; F\in \widetilde{C}^1_b(X ,X), \; \|F(x)\| \leq 1\; \forall x\in X\bigg\}. $$

The right hand side is meaningful for every $u\in L^p(X, \nu)$ with $p>1$, and, more generally, for every $u\in L^1(X, \nu)$ such that $uv_z\in L^1(X, \nu)$ for every $z\in X$. 
A natural basic question is whether, given any $u\in L^p(X, \nu)$ with $p>1$,  $V(u)<+\infty$ is equivalent to $u\in BV(X, \nu)$. 
While  it is not difficult  to see that  if $u\in BV(X, \nu)$  then $V(u)<+\infty$,    if $X$ is infinite dimensional the converse  is a tough question. 
To give a positive answer, it is sufficient to prove the following theorem. 

\begin{Theorem}
\label{Th:BVz_intro}
Fix any $z\in X$. Let $u\in L^1(X, \nu)$ be such that $uv_z\in L^1(X, \nu)$ and 
$$V_z(u) :=    \sup \bigg\{ \int_X u (\langle R\nabla \varphi,z \rangle - v_z\varphi)\, d\nu: \; \varphi \in C^1_b(X), \; \|\varphi\|_{\infty}  \leq 1 \bigg\} <+\infty . $$
Then  there exist a real Borel measure $m_z$ such that \eqref{defBVintro} holds. 
\end{Theorem}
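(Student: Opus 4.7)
The plan is to apply a Riesz-type representation argument. Since $X$ is not locally compact, I would proceed by finite-dimensional approximation. Set $L(\psi):=\int_X u(\langle R\nabla\psi,z\rangle-v_z\psi)\,d\nu$ for $\psi\in C^1_b(X)$; by hypothesis $|L(\psi)|\le V_z(u)\|\psi\|_\infty$, and the goal is to produce a finite signed Borel measure $m_z$ with $L(\varphi)=-\int_X\varphi\,dm_z$ for every $\varphi\in C^1_b(X)$.

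First, fix an orthonormal basis $\{e_k\}_{k\in\N}$ of $X$, let $X_n:=\mathrm{span}(e_1,\dots,e_n)$, and let $P_n$ be the orthogonal projection onto $X_n$. The functional $L_n(\varphi):=L(\varphi\circ P_n)$ on $C^1_b(X_n)$ inherits $|L_n(\varphi)|\le V_z(u)\|\varphi\|_\infty$. Since $X_n\cong\R^n$ is locally compact, I would extend $L_n$ from $C^1_b(X_n)$ to $C_0(X_n)$ by mollifying any $\varphi\in C_c(X_n)$ to $\varphi_\eps=\varphi\ast\rho_\eps\in C^\infty_c(X_n)$ (so that $\|\varphi_\eps-\varphi\|_\infty\to 0$ and $L_n(\varphi_\eps)$ is Cauchy), and then invoke the Riesz--Markov--Kakutani theorem to obtain a finite signed Borel measure $\mu_n$ on $X_n$, with $|\mu_n|(X_n)\le V_z(u)$, representing $L_n$ on $C_0(X_n)$. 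A cutoff argument using $\chi_R\in C^1_c(X_n)$ with $\chi_R\to 1$ and $\|\nabla\chi_R\|_\infty\to 0$, combined with $u,uv_z\in L^1(X,\nu)$, would extend the representation to all of $C^1_b(X_n)$; and for $m\ge n$, testing against cylindrical $\varphi\circ\pi_n^m$ (with $\pi_n^m\colon X_m\to X_n$ the natural projection) would give the compatibility that $\mu_n$ is the pushforward of $\mu_m$ under $\pi_n^m$.

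Next, I would push each $\mu_n$ forward to $X$ via the inclusion $X_n\hookrightarrow X$, obtaining finite signed Borel measures $\tilde\mu_n$ on $X$ with $|\tilde\mu_n|(X)\le V_z(u)$. For any $\psi\in C^1_b(X)$, $P_nx\to x$ and $P_n(\nabla\psi(P_nx))\to\nabla\psi(x)$ pointwise with uniform bounds, so dominated convergence (using $u,uv_z\in L^1(X,\nu)$) gives
$$\int_X\psi\,d\tilde\mu_n\;=\;L(\psi\circ P_n)\;\longrightarrow\;L(\psi).$$

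The hard part will be to extract from $\{\tilde\mu_n\}$ a genuine countably additive signed Borel measure $m_z$ on the infinite-dimensional Polish space $X$: uniform bounded total variation alone does not yield weak compactness in this setting. I expect the decisive tool to be a Prokhorov-type tightness argument for $\{|\tilde\mu_n|\}$ exploiting the compatibility of the marginals together with the Radon regularity of $\nu$; as an alternative, one could identify $X$ with a Borel subset of $\R^\N$ through the chosen basis and invoke a Kolmogorov-type extension principle for signed measures of uniformly bounded total variation along the projective system $(\mu_n,\pi_n^m)$, then verify that the extension is concentrated on $X$. Once such $m_z$ is in hand, the limit displayed above identifies it (up to sign) as the measure required in \eqref{defBVintro}.
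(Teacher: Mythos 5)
Your finite--dimensional reduction is sound as far as it goes: the functionals $L_n(\varphi)=L(\varphi\circ P_n)$ are bounded by $V_z(u)\|\varphi\|_\infty$, the Riesz--Markov theorem on $X_n\cong\R^n$ produces measures $\mu_n$ with $|\mu_n|(X_n)\le V_z(u)$, the cutoff argument extends the representation to $C^1_b(X_n)$, and the family $(\mu_n)$ is projectively consistent. The genuine gap is exactly where you say "the hard part will be": neither of the two tools you propose closes it. A Kolmogorov-type extension principle for \emph{signed} measures of uniformly bounded total variation is false in general: the consistent family $(\mu_n)$ defines a finitely additive set function on the cylinder algebra, and writing $\mu_n=\rho_n\,\Sigma_n$ with respect to a dominating consistent family of positive measures, the densities $(\rho_n)$ form an $L^1$-bounded martingale, which converges a.e.\ but need not converge in $L^1$; countable additivity of the limit can fail. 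Likewise, Prokhorov tightness of $\{|\tilde\mu_n|\}$ does not follow from $\sup_n|\tilde\mu_n|(X)\le V_z(u)$ together with consistency of the marginals --- establishing that tightness is essentially equivalent to the theorem itself (this is precisely why, as the introduction notes, the absence of a Riesz theorem on the non-locally-compact $X$ makes the statement nontrivial). So the proposal restates the problem at its critical juncture rather than solving it.

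The paper supplies the missing compactness substitute by localizing in $X$ rather than projecting onto finite dimensions. If $u$ vanishes outside $B(0,r)$, one first shows the sharpened bound $|T_z\varphi|\le V_z(u)\,\|\varphi\|_{L^\infty(B(0,r))}$; then $B(0,r)$, equipped with the \emph{weak} topology, is a compact metrizable space on which the restrictions of cylindrical $C^1_b$ functions are dense in $C_w(B(0,r))$ by Stone--Weierstrass, so the classical Riesz theorem applies there and yields the measure directly (the weak and norm Borel $\sigma$-algebras coincide, so it is a genuine Borel measure on $X$). The general case is then reduced to this one by a $C^1$ partition of unity $\{\eta_k\}$ subordinate to the annuli $\{k-1<\|x\|<k+1\}$, writing $u=\sum_k u\eta_k$ and proving the summability $\sum_k|m_k|(X)\le V_z(u)+1+2\|u\|_{L^1(X,\nu)}\|\theta'\|_\infty$ by testing against a single function $\varphi=\sum_k\eta_k\varphi_k$ built from near-optimal test functions for each piece. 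If you want to salvage your projective-limit scheme, you would need to inject an analogous localization/tightness input; without it the extraction of $m_z$ from $(\tilde\mu_n)$ does not go through.
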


Indeed, if $V(u) <+\infty$ then $V_z(u) <+\infty$ for every $z$, and having the measures $m_z$ at our disposal, a vector measure $m$ such that $m_z(B) = \langle m(B), z\rangle$ for every $z\in X$ and for every Borel set $B$ may be constructed by a  natural procedure. 

However, this slightly simplified problem is hard, too. The assumption $V_z(u)<+\infty$ means that the linear operator
$$T_{u,z}:D(T_{u,z}) :=C^1_b(X)\mapsto \R, \quad T_{u,z}\varphi := \int_X u (\langle R\nabla \varphi,z \rangle - v_z\varphi)\, d\nu, $$
is bounded in the $L^{\infty}$ norm, and therefore it has a linear bounded extension to $BUC(X)$, the closure of $C^1_b(X)$ in the sup norm topology. However, since  $X$ is not locally compact, no version of the Riesz representation Theorem is available and it is not obvious that $T_{u,z}$ may be represented through a measure $m_z$.  

The problem of finding $m_z$ was solved by Fukushima several years ago, by a very complicated procedure that works in a much more  general context of Hausdorff topological vector spaces, and that substantially relies on a change of topology  earlier used in the theory of Dirichlet forms  (\cite{Fu,FOT}). In the  case that $\nu$ is a nondegenerate Gaussian measures in a separable Banach space, for  $u$ in a suitable Orlicz space an independent  much simpler proof that exploited the properties of Gaussian measures was given in \cite{AMMP}.  

Here, for general measures satisfying Hypothesis \ref{h1'},  we take advantage of our Hilbert space setting  to give a much simpler and self-contained proof of Theorem \ref{Th:BVz_intro}. We solve the problem in two steps. In the first step we assume that $u$ vanishes outside some ball $B(0,R)$. In this case we find that $|T_{u,z}\varphi| \leq V_z(u)\|\varphi\|_{L^{\infty}(B(0,R))}$ for every $\varphi\in C^1_b(X)$. 
We endow $X$ with  the weak topology,  that lets $B(0,R)$ be compact. The restriction of $T_{u,z}$ to the (restrictions to $B(0,R)$) of the cylindrical smooth functions$\footnote{namely,  functions of the type  $\varphi(x) =f( \langle x, x_1\rangle, \ldots , \langle x, x_n\rangle)$ for some $f\in C^1_b(\R^n)$ and $x_1, \ldots, x_n\in X$. }$ is therefore a bounded operator in the sup norm, defined in a dense set of the space $C_w(B(0,R))$ of the weakly continuous functions in $B(0,R)$. It has a bounded extension to the whole $C_w(B(0,R))$, which has an integral representation, $\varphi\mapsto \int_{B(0,R)} \varphi\, d\mu$ for some real Borel measure $\mu$ on $B(0,R)$,  by the Riesz Theorem. Since the Borel sets with respect to the weak topology coincide with the  Borel sets with respect to the norm topology, $\mu$ is in fact a Borel measure in $B(0,R)$ with respect to the norm topology. It is extended in a trivial way to all the Borel sets in $X$, and the extension $m_z$ satisfies  \eqref{defBVintro}. 

In the second step we use a $C^1$ partition of $1$ associated to the covering $\{ \oo B(0, k+1)\setminus B(0, k-1):\; k\in \N\} \cup \oo B(0, 1)$ of $X$, 
to write  any $u$ as the series $u= \sum_{k=0}^{\infty} u_k(x)$, where $u_0$ vanishes for $\|x\|\geq 1$ and $u_k$ vanishes for $\|x\|\leq k-1$ and for $\|x\|\geq k+1$, for $k\in \N$. 
If   $V_z(u) <+\infty$, for every $k$ we have $V_z(u_k) <+\infty$, and by the first step there are real Borel measures $m_{z,k}$ such that $T_{u_k,z}\varphi = \int_X \varphi \,dm_{z,k}$ for every $\varphi\in C^1_b(X)$. Patching together the measures $m_{z,k}$ we find that $m_z:= \sum_{k=1}^{\infty} m_{z,k}$ is a well defined real measure such that  \eqref{defBVintro} holds, and that  satisfies $|m_z(X)| = V_z(u)$. 

If the operator $R$ is very good, namely $R=R^*$ is one to one and there exists an orthonormal basis $\{e_k:\; k\in\N\}$ of $X$ consisting  of eigenvectors of $R$, other Sobolev spaces and spaces of $BV$ functions may  be defined. Indeed, in this case every partial derivative $\partial \varphi/\partial e_k : C^1_b(X)\mapsto L^p(X, \nu)$ is closable as an unbounded operator in $L^p(X, \nu)$ for $p\geq 1$, and therefore for any bounded nonnegative sequence $(\alpha_k)$ the operator $\varphi\mapsto \sum_{k=1}^{\infty} \alpha_k (\partial \varphi/\partial e_k )e_k: C^1_b(X)\mapsto L^p(X, \nu)$ is closable as an unbounded operator in $L^p(X, \nu)$ for $p\geq 1$. 
The domain of its closure, endowed with the graph norm,  is still a Sobolev space.   The case $\alpha_k=1$ for each $k$ is  of particular interest, since the above operator is just the gradient from $C^1_b(X)$ to $L^p(X, \nu)$. The domain of its closure $\nabla_p$  is called $W^{1,p}_0(X, \nu)$ and it is continuously embedded in $W^{1,p}(X, \nu)$, while in general $W^{1,p}(X, \nu)$ is not contained in $W^{1,p}_0(X, \nu)$. 
For every $u\in W^{1,p}_0(X, \nu)$ with $p>1$ the integration by parts formula \eqref{parti_introd} gives 
$$ \int_X u (\langle \nabla \varphi,y \rangle - v_{R^{-1}y}\varphi)\,d\nu= - \int_X \langle \nabla_p u, R^{-1}y\rangle \varphi  \,d\nu, \quad y \in X, \;\varphi\in C^1_b(X), $$
and, again, the right hand side can be seen as minus the integral of $\varphi$
 with respect to the real measure $\langle \nabla_p u,  R^{-1}y\rangle  \nu$. The corresponding notion of bounded variation  function is the following: for every  $u\in L^1(X, \nu)$ such that $uv_z\in L^1(X, \nu)$ for each $z\in X$, we say that $u$  belongs to $BV_0(X, \nu)$ if there exists a Borel $X$-valued vector measure $m_0$ such that  
\begin{equation}
\label{defBVintro_0}
 \int_X u (\langle  \nabla \varphi,y \rangle - v_{R^{-1}y}\varphi)\,d\nu= - \int_X  \varphi  \,d\langle m_0, y\rangle , \quad y\in X, \;\varphi\in C^1_b(X).
 \end{equation}
If $u\in BV_0(X, \nu)$, then $u\in BV(X, \nu)$ and the measure $m$ is just $Rm_0$. Using again Theorem \ref{Th:BVz_intro}, we show that 
if $u\in L^p(X, \nu)$ for some $p>1$, then $u\in BV_0(X, \nu)$ iff
$$V_0(u):= \sup   \bigg\{ \int_X u \,\nabla^* _pF \, d\nu: \; F\in \widetilde{C}^1_b(X ,X), \; \|F(x)\| \leq 1\; \forall x\in X\bigg\},  $$
 where $\nabla^*_p$ is the adjoint operator of $\nabla_p$, and for $F\in \widetilde{C}^1_b(X ,X)$, $F(x) = \sum_{i=1}^n f_iz_i$ we have $ \nabla^* _pF=
 \sum_{i=1}^n (\langle \nabla f_i, z_i\rangle - v_{R^{-1}z_i}f_i)$. 

If $\nu$ is a centered nondegenerate Gaussian measure, it is natural to choose $R= Q^{1/2}$, where $Q$ is the covariance of $\nu$, so that the range of $R$ is the Cameron-Martin space consisting of all $h\in X$ such that $\nu$ is Fomin differentiable along $h$. Since $R$ is compact and self-adjoint, there exists an orthonormal basis of $X$ consisting of eigenvectors of $R$, so that both spaces $BV(X, \nu)$ and $BV_0(X, \nu)$ are meaningful. 
$BV$ functions for Gaussian measures in separable Banach spaces were introduced in \cite{F} and subsequently studied   in \cite{FH,AMMP}. Although the notations of such papers are different from ours, our notion of $BV$ functions coincides with theirs.   $BV_0$ functions were considered in the Hilbert space setting; our notion of $BV_0$ functions coincides  with the one of \cite{AmDaPa10}, which was introduced in the last section of \cite{AMMP}. The paper  \cite{RZZ1} deals with a class of $BV$ functions for Gaussian measure in Hilbert spaces, depending on  a Hilbert space $H_1\subset X$; our notion of $BV$ and $BV_0$ functions coincide  with the ones of  \cite{RZZ1} with the choices $H_1=Q^{1/2}(X)$ and $H_1=X$, respectively.

Still in the case of Gaussian measures, an elegant characterization of $BV$ functions  through Ornstein-Uhlenbeck semigroups is available. Precisely,  in \cite{FH} it was shown that if $u$ belongs to the Orlicz space $Y= L(\log L)^{1/2}(X, \nu)$, then $u\in BV(X, \nu) $ if and only if 
$$\liminf_{t\to 0} \int_X \|M_1T(t)u\|\,d\nu <+\infty$$
where $T(t)$ is the classical Ornstein-Uhlenbeck semigroup, see Section 5. An analogous characterization for $BV_0$ functions was obtained in \cite{AmDaPa10} through another Ornstein-Uhlenbeck semigroup. 

If there exists a smoothing semigroup of operators $T(t)$ in $L^p(X, \nu)$ for some $p\geq 1$ having good commutation properties with partial derivatives, 
we obtain similar results for our general measures $\nu$ (Section 3.2). 

As in the finite dimensional case, if a characteristic function $\one_B$ belongs to $BV(X, \nu)$, we say that $B$ has finite perimeter, and $|m|(X)$ is called perimeter of $B$. 
Establishing whether a given Borel set $B$ has finite perimeter is not an easy task. We prove that every halfspace $H_{a,r}:=  \{ x\in X:\; \langle x, a\rangle <r\}$ with $a\in X$ and $r\in \R$ has finite perimeter, and we give a formula to compute its perimeter. Relying on the construction of surface measures of \cite{TAMS}, we show that if $g$ is a smooth enough function satisfying suitable nondegeneration assumptions, 
the sublevel sets $ \{ x\in X:\; g(x) <r\}$ have finite perimeter for every $r\in \R$. If $g$ belongs just to $W^{1,1}(X, \nu)$ we can only prove that for almost all $r\in \R$ the  set $ \{ x\in X:\; g(x) <r\}$ has finite perimeter. 

Our general theory may be applied to Gaussian measures in Hilbert spaces, in which case we find again the results of the above mentioned papers \cite{F,FH,AMMP,AmDaPa10}. 
We find new results for weighted Gaussian measures $\nu= e^{-2U}\gamma$, where $U$ is a convex $C^1$ function with Lipschitz continuous gradient and $\gamma $ is a centered nondegenerate Gaussian measure. Again it is convenient to choose $R=Q^{1/2}$, where $Q$ is the covariance of $\gamma$. We prove that if $u\in L^2(X, \gamma)$ then 
\begin{equation}
\label{equv_intro}
u\in BV(X, \nu) \Longleftrightarrow \liminf_{t\to 0} \int_X \|M_2T(t)u\|\,d\nu <+\infty, 
\end{equation}
where $T(t)$ is the semigroup generated by the self-adjoint operator $K$ associated to the quadratic form
\begin{equation}
\label{K_intro}
(u,v)\mapsto \int_X \langle  \nabla_2u, \nabla_2v\rangle \,d\nu, \quad u, \, v\in W^{1,2}_0(X, \nu). 
\end{equation}
%
%
$T(t)$ is obviously smoothing, since it is an analytic semigroup that maps $L^2(X, \nu)$ into the domain of $K$ which is contained in $ W^{1,2}_0(X, \nu)$. 
Here we prove a commutation formula of independent interest,  
\begin{equation}
\label{comm_intro}
\frac{\partial T(t)f }{\partial e_k} (x) - e^{-t/2\lambda_k}T(t)\left( \frac{\partial f }{\partial e_k} \right)(x) = - \int_0^t e^{-(t-s)/2\lambda_k}(T(t-s) \langle D^2U(\cdot )e_k,   \nabla T(s)f (\cdot ))(x)\rangle  \,ds , 
\end{equation}
that holds for  $t>0$, $f \in C^1_b(X)$,  and any orthonormal basis $\{e_k:\; k\in \N\}$ of $X$ consisting of eigenvectors of $Q$,  with $Qe_k =\lambda_k e_k$. 
Such a formula is a key tool for the above characterization. As it frequently happens in infinite dimensional analysis, its formal derivation  is easy but its proof is complicated, and it is deferred to the Appendix. 

We also give a specific example, in which the assumption that $U$ has Lipschitz continuous gradient is not satisfied. Namely, we consider the case where  $\nu$ is the invariant measure of a stochastic reaction-diffusion equation, 
\begin{equation}
\label{reazdiff_intro}
dX(t)=[AX(t)- f(X(t))]dt+ dW(t),
\end{equation}
where $A$ is  the realization of the second order derivative with Dirichlet boundary condition in $X:=L^2(0, 1)$, and the nonlinearity $f:\R\mapsto \R$ is an increasing polynomial with degree $d>1$, and $W(t)$  is  any cylindrical $X$-valued Wiener process. We have  $\nu= e^{-2U}\gamma/\int_Xe^{-2U}d\gamma$, where $\gamma $ is the centered Gaussian measure with covariance $Q= (-2A)^{-1}$, and 
 $U(x)=   \int_0^{1} \Phi(x(\xi))d\xi$, $\Phi$ being any primitive of $f$.   This function is defined  $\gamma$-a.e., namely   in $L^{d+1}(0,1)$. We know from \cite{DPL} that $\gamma( L^q(0,1)) = 1$ for every $q\geq 2$, and   $U\in   W^{2,p}(X, \gamma)\cap   W^{1,p}_0(X, \gamma)$ for every $p\geq 1$. 
We show that the semigroup $T(t)$ defined as before, through the quadratic form \eqref{K_intro}, 
 is an extension to $L^2(X, \nu)$ of the transition semigroup of equation  \eqref{reazdiff_intro}, 
and we get estimates on  the commutators between $T(t)$ and partial derivatives approximating $U$ by its Yosida approximations $U_{\alpha}$,  and using \eqref{comm_intro} for the corresponding semigroups $T_{\alpha}(t)$. As a result,  we get the same characterization as in the general smooth case, namely  we prove that \eqref{equv_intro} holds for  $u\in L^2(X, \nu)$. 

Our last example concerns a class of product measures $\nu$  in $X$  that are not Gaussian nor weighted Gaussian measures. As a consequence of the already mentioned result of  \cite{TAMS}, for such measures the characteristic functions of balls centered at the origin belong to $BV(X, \nu)$, so that all such balls have finite perimeter. Here we show that in the particular case $X= L^2(0,1)$, for every $q>2$ the $L^q$ ball $\{ x\in L^q(0,1):\; \|x\|_{L^q(0,1)}< r\}$ has finite perimeter for a e. $r>0$.

\section{Notation and preliminaries}

Throughout the paper we assume that Hypothesis \ref{h1'} holds. It yields that 
 for every $p\geq 1$ the linear mapping $X\mapsto L^p(X, \nu)$, $z\mapsto v_z$, is closed and therefore continuous. Consequently, there exist $C_p>0$
such that 
\begin{equation}
\label{vz}
\|v_z\|_{L^p(X,\nu)}\leq C_p\|z\|, \quad z\in X. 
\end{equation}

In this section we collect notation and  results (mainly taken from  \cite{TAMS}) that will be used later. 

\subsection{General notation.}

We consider  a separable Hilbert space  $X$ with norm $\|\cdot\|$ and scalar product $\langle\cdot, \cdot\rangle$,  endowed with a Borel   probability measure $\nu$. For every $r>0$ and $x_0\in X$ we denote by $B(x_0, r)$ the closed ball centered at $0$ with radius $r$. 

Fixed any orthonormal basis $\{e_k: \; k\in \N\}$ of $X$, we denote by  $P_n$  the orthogonal projection 
\begin{equation}
\label{Pn}
P_nx := \sum_{k=1}^n \langle x, e_k\rangle e_k, \quad x\in X. 
\end{equation}

For $p>1$ we set as usual $p' = p/(p-1)$.

\subsection{Spaces of continuous and differentiable functions. }

For Fr\'echet differentiable functions $\varphi: X\mapsto \R$ we denote by  $\nabla \varphi(x)$ the  gradient of $\varphi $  at $x$, and by $\partial_z\varphi(x) = \langle \nabla \varphi(x), z\rangle$ its derivative along $z$, for every $z\in X$. 

By $C_b(X)$   we mean the space of all real continuous  and bounded mappings   $\varphi:  X\to \R$, endowed with the sup norm $\|\cdot \|_{\infty}$. Moreover, $C^1_b(X)$  is  the subspace of $C_b(X)$  of all continuously Fr\'echet  differentiable functions, with bounded   gradient. 

The space of the cylindrical functions $\mathcal{FC}^1_b(X)$ is the set of all functions $f:X\mapsto \R$ of the type 
$f(x) = \varphi (\langle x, z_1\rangle, \ldots \langle x, z_n\rangle)$, where $\varphi\in C^1_b(\R^n)$ and $z_k\in X$ for $k=1, \ldots, n$. 

We shall also consider special classes of vector fields, consisting of vector fields with values in a finite dimensional subspace of $X$, and marked by a tilde $\,\widetilde{}\;$. For every subspace $Y$ of $X$ we set
\begin{equation}
\label{C1btilde}
\widetilde{C}^{1 }_b(X, Y) := \{ F = \sum_{i=1}^n f_i\,z_i, \; n\in \N, \; f_i\in C^{1}_b(X), \; z_i\in Y\}, 
\end{equation}
\begin{equation}
\label{FC1btilde}
\widetilde{\mathcal{FC}}^{1 }_b(X, Y) := \{ F = \sum_{i=1}^n f_i\,z_i, \; n\in \N, \; f_i\in \mathcal{FC}^{1}_b(X), \; z_i\in Y\}. 
\end{equation}

 We shall use the following approximation lemma.

\begin{Lemma}
\label{Le:approx}
Let $\{e_k: \; k\in \N\}$ be any any orthonormal basis  of $X$, and let $P_n$ be defined by \eqref{Pn}. 
\begin{itemize}
\item[(i)] For every $\varphi\in C^1_b(X)$ the sequence $(\varphi_n):= (\varphi\circ P_n)$ converges pointwise to $\varphi$, $\partial \varphi_n/\partial e_k$ converges pointwise to $\partial \varphi/\partial e_k$ for each $k\in \N$, and $\|\varphi_n\|_{\infty}\leq \|\varphi\|_{\infty}$, $\|\partial \varphi_n/\partial e_k\|_{\infty} \leq \|\partial \varphi/\partial e_k\|_{\infty}$, 
$\sup_{x\in X} \|\nabla \varphi_n(x)\| \leq \sup_{x\in X} \|\nabla \varphi(x)\| $. 
\item[(ii)] For every $\varphi\in C_b(X)$ there exists a two-index sequence $(\varphi_{k,n})$ of   $\mathcal{FC}^1_b(X)$ functions such that 
$$\lim_{k\to \infty} \lim_{n\to \infty} \varphi_{k,n}(x) = \varphi(x); \quad |\varphi_{k,n}(x)|  \leq \|\varphi\|_{\infty}, \qquad  x\in X. $$
\item[(iii)] Let $r>0$. For every $\varphi\in C_b(X)$ such that $\varphi(x) =0$ for $\|x\|\geq r$, there exists a two-index sequence $(\widetilde{\varphi}_{k,n})$ of   $C^1_b(X)$ functions such that $\widetilde{\varphi}_{k,n}(x) = 0$ for $\|x\|\geq r$ and
$$\lim_{k\to \infty} \lim_{n\to \infty} \widetilde{\varphi}_{k,n}(x) = \varphi(x), \:\forall x\in X; \quad \|\widetilde{\varphi}_{k,n}\|_{\infty} \leq \|\varphi\|_{\infty}. $$
\end{itemize}
\end{Lemma}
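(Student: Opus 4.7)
For part (i), the pointwise convergences and sup bounds all follow from the chain rule applied to the composition $\varphi_n=\varphi\circ P_n$. Since $P_n x\to x$ in $X$ for every $x$ and $\varphi$ is continuous, $\varphi_n(x)\to\varphi(x)$ and the bound $\|\varphi_n\|_\infty\le\|\varphi\|_\infty$ is immediate. The chain rule gives $\nabla\varphi_n(x)=P_n\nabla\varphi(P_n x)$, which is bounded in norm by $\|\nabla\varphi(P_n x)\|\le\sup_{x\in X}\|\nabla\varphi(x)\|$ since $\|P_n\|_{\mathcal L(X)}\le 1$. For each fixed $k$, one has $P_n e_k=e_k$ as soon as $n\ge k$, so $\partial\varphi_n/\partial e_k(x)=(\partial\varphi/\partial e_k)(P_n x)$ for such $n$, which tends to $(\partial\varphi/\partial e_k)(x)$ by continuity of $\nabla\varphi$; the bound $\|\partial\varphi_n/\partial e_k\|_\infty\le\|\partial\varphi/\partial e_k\|_\infty$ holds for all $n$ (trivially when $k>n$, since in that case $P_n e_k=0$).

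For part (ii), I would combine finite-dimensional projection with Friedrichs-type mollification. Given $\varphi\in C_b(X)$ and $k\in\N$, define $\psi_k:\R^k\to\R$ by $\psi_k(y_1,\ldots,y_k):=\varphi(y_1 e_1+\cdots+y_k e_k)$; this is continuous with $\|\psi_k\|_\infty\le\|\varphi\|_\infty$. Let $(\rho_n)$ be a standard nonnegative $C^\infty$ mollifier on $\R^k$ with $\int_{\R^k}\rho_n=1$ and support shrinking to $\{0\}$, and set $\psi_{k,n}:=\psi_k*\rho_n$. Then $\psi_{k,n}\in C^1_b(\R^k)$ (the gradient bound coming from $\nabla\psi_{k,n}=\psi_k*\nabla\rho_n$), with $\|\psi_{k,n}\|_\infty\le\|\psi_k\|_\infty$ and $\psi_{k,n}\to\psi_k$ pointwise by continuity of $\psi_k$. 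Defining $\varphi_{k,n}(x):=\psi_{k,n}(\langle x,e_1\rangle,\ldots,\langle x,e_k\rangle)$ gives a sequence in $\mathcal{FC}^1_b(X)$ with $|\varphi_{k,n}|\le\|\varphi\|_\infty$; letting first $n\to\infty$ yields $\varphi(P_k x)$, and letting $k\to\infty$ yields $\varphi(x)$ by continuity of $\varphi$ together with $P_k x\to x$.

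For part (iii), I truncate the approximations from (ii) by a smooth radial cutoff. Pick a sequence $r_n\uparrow r$ with $0<r_n<r$, and $\eta_n\in C^1(\R)$ with $0\le\eta_n\le 1$, $\eta_n(s)=1$ for $s\le r_n^2$ and $\eta_n(s)=0$ for $s\ge r^2$. Since $x\mapsto\|x\|^2$ is smooth on $X$ with derivative $2x$, the function $\chi_n(x):=\eta_n(\|x\|^2)$ lies in $C^1_b(X)$, vanishes on $\{\|x\|\ge r\}$, and equals $1$ on $\{\|x\|\le r_n\}$. Then $\widetilde\varphi_{k,n}:=\chi_n\,\varphi_{k,n}$ lies in $C^1_b(X)$, vanishes for $\|x\|\ge r$, and satisfies $\|\widetilde\varphi_{k,n}\|_\infty\le\|\varphi\|_\infty$. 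For the iterated limit: if $\|x\|\ge r$ both sides vanish (since $\varphi(x)=0$ by hypothesis and $\chi_n(x)=0$); if $\|x\|<r$ then $\chi_n(x)=1$ for all $n$ large enough (since eventually $r_n\ge\|x\|$), so $\widetilde\varphi_{k,n}(x)=\varphi_{k,n}(x)$ for such $n$, and the conclusion reduces to (ii). No step presents a real obstacle; the only delicate choice is to compose the cutoff with $\|x\|^2$ (rather than $\|x\|$) so as to preserve $C^1$ regularity at the origin.
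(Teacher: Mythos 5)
Your proof is correct and follows essentially the same route as the paper: the chain rule for (i), finite-dimensional projection combined with mollification for (ii), and multiplication by a radial $C^1$ cutoff composed with $\|x\|^2$ for (iii). The only difference is a cosmetic relabelling of which index governs the projection dimension versus the mollification scale (and the cutoff parameter in (iii)), which does not affect the argument.
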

\begin{proof}
Statement (i) is easily proved, noticing that $\partial \varphi_n/\partial e_k (x)= \partial \varphi/\partial e_k(P_nx)$ for $k\leq n$, and $\partial \varphi_n/\partial e_k (x)$ $=0$ for $k>n$. 

To prove Statement (ii), first of all we approach $\varphi$ by $\varphi \circ P_n$. In its turn, $\varphi \circ P_n$ is approached by
$$\varphi_{k,n}(x) := \int_{\R^n} \varphi \bigg( P_nx + \frac{1}{k}\sum_{j=1}^{n}\xi_k e_k\bigg) \rho_n(\xi) d\xi, $$
where $\rho_n $ is any smooth function supported in the unit ball of $\R^n$, such that  $\int_{\R^n} \rho_n(\xi) d\xi =1$. Statement (ii) follows. 

To prove Statement (iii) we fix a sequence of functions $\theta_k\in C^1(\R)$ such that 
$$\theta_k(\xi) = 1 \;\mbox{for}\; \xi\leq \bigg(r-\frac{1}{k}\bigg)^2, \quad \theta_k(\xi) = 0 \;\mbox{for}\; \xi \geq r^2, \quad \|\theta_k\|_{\infty} =1, $$
and we modify the sequence $(\varphi_{k,n})$ of Statement 1 setting 
$\widetilde{\varphi}_{k,n}(x) := \varphi_{k,n}(x) \eta_k(x)$, where $ \eta_k(x)= \theta_k(\|x\|^2)$. 
\end{proof}

\begin{Remark}
\label{Rem:normaL^1}
As a consequence of Lemma \ref{Le:approx}(ii), for every  $f\in L^1(X, \nu)$ we have
$$\|f\|_{L^1(X, \nu)} = \sup \bigg\{ \int_X f\,\varphi\,d\nu:\; \varphi\in C^1_b(X ), \,\|\varphi\|_{\infty} \leq 1\bigg\}. $$
Moreover, 
$$\|F\|_{L^1(X, \nu;X)} = \sup \bigg\{ \int_X \langle F, \Phi\rangle\,d\nu:\;\Phi \in \widetilde{C}^1_b(X ,X), \,\|\Phi\|_{\infty} \leq 1\bigg\}. $$
These equalities will be used later. 
\end{Remark}

\subsection{Sobolev spaces}

\begin{Proposition}
For every $p\in [1, +\infty)$ and for every 
 $z\in X$, the operator $C^1_b(X) \mapsto L^p(X, \nu)$, $\varphi\mapsto \langle R\nabla \varphi, z\rangle$ is closable in $L^p(X, \nu)$. Therefore,  the operator
$$ R\nabla : D(R\nabla):= C^1_b(X) \mapsto L^p(X, \nu;X)
$$
is closable   in $L^p(X, \nu)$. 
\end{Proposition}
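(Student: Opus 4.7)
The plan is to establish closability of the scalar operator $\varphi\mapsto \langle R\nabla\varphi, z\rangle$ in $L^p(X,\nu)$ first, and then deduce the vector-valued version by testing against a countable dense subset of $X$.

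The first step is a pre-adjoint identity. For $\varphi,\psi\in C^1_b(X)$ the product $\varphi\psi$ lies in $C^1_b(X)$, so \eqref{e1a} applied to $\varphi\psi$ together with the product rule yields
\begin{equation*}
\int_X \psi\,\langle R\nabla\varphi, z\rangle\,d\nu \;=\; \int_X \varphi\,\bigl(v_z\psi - \langle R\nabla\psi, z\rangle\bigr)\,d\nu.
\end{equation*}
Now assume $(\varphi_n)\subset C^1_b(X)$, $\varphi_n\to 0$ in $L^p(X,\nu)$, and $g_n:=\langle R\nabla\varphi_n, z\rangle\to g$ in $L^p(X,\nu)$; we must show $g=0$. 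For $p>1$ this follows quickly: for any $\psi\in C^1_b(X)$ the left-hand side tends to $\int_X \psi g\,d\nu$, while the quantity $h_\psi:=v_z\psi-\langle R\nabla\psi, z\rangle$ belongs to $L^{p'}(X,\nu)$ (since $v_z\in L^{p'}$ by Hypothesis \ref{h1'} when $1<p'<\infty$, and $\psi$, $\langle R\nabla\psi, z\rangle$ are bounded), so the right-hand side tends to $0$. Hence $\int_X \psi g\,d\nu=0$ for every $\psi\in C^1_b(X)$; combined with Lemma \ref{Le:approx}(ii) and standard approximation of bounded Borel functions this forces $g=0$.

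The main obstacle is the case $p=1$, where $v_z$ need not belong to $L^\infty$ and $h_\psi$ is no longer a legitimate $L^\infty$ pairing. To remedy this I would pass to a subsequence so that $\varphi_n\to 0$ a.e., fix a smooth truncation $\tau_M\in C^1(\R)$ with $\tau_M(t)=t$ in a neighbourhood of $0$, $|\tau_M|\le 2M$, and $\tau_M'$ bounded, and apply the pre-adjoint identity to $\tau_M(\varphi_n)\in C^1_b(X)$. Then $\langle R\nabla\tau_M(\varphi_n), z\rangle=\tau_M'(\varphi_n)\,g_n$ still converges to $g$ in $L^1(X,\nu)$ by Vitali's theorem ($g_n$ is equi-integrable because convergent in $L^1$, and $\tau_M'(\varphi_n)\to 1$ boundedly a.e.), so the left-hand side of the identity still converges to $\int_X \psi g\,d\nu$. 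On the right-hand side $\tau_M(\varphi_n)$ is uniformly bounded by $2M$ and tends to $0$ a.e., and $h_\psi\in L^1(X,\nu)$ because $v_z\in L^q(X,\nu)$ for some $q>1$ on a probability space; dominated convergence then forces the right-hand side to $0$, again yielding $g=0$. Finally, the vector-valued closability is immediate: if $R\nabla\varphi_n\to F$ in $L^p(X,\nu;X)$ with $\varphi_n\to 0$ in $L^p$, then $\langle R\nabla\varphi_n, z\rangle\to \langle F, z\rangle$ in $L^p(X,\nu)$ for every $z\in X$, so the scalar case gives $\langle F, z\rangle=0$ a.e.\ for each such $z$, and taking $z$ in a countable dense subset of $X$ shows $F=0$ a.e.
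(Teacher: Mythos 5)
Your proposal is correct and follows essentially the same route as the paper: for $p>1$ the duality $v_z\in L^{p'}$ settles it at once, and for $p=1$ both arguments pass to an a.e.\ convergent subsequence, compose with a bounded $C^1$ truncation that is the identity near $0$ (your $\tau_M$ is the paper's $\theta$), and use dominated convergence in the integration-by-parts identity to conclude the limit vanishes. The only cosmetic differences are that you phrase the convergence $\tau_M'(\varphi_n)g_n\to g$ via Vitali where the paper splits the difference directly, and that you reduce the vector-valued case to the scalar one via a countable dense set of directions where the paper tests against an orthonormal basis.
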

\begin{proof} Let us consider the case  $p=1$. 

Let  $f_n\in C^1_b(X)$ be such that  $f_n\to 0$ in $L^1(X, \nu)$, and  $R\nabla f_n\to G$ in $L^1(X, \nu; X)$. We have to show that $G=0$. 
Without loss of generality, we may assume that 
$f_n(x)\to 0$ for a.e. $x\in X$. 

Fix a function $\theta \in C^1_b(\R)$ such that 
$$\theta(0) =0, \quad \theta'(0)=1. $$
For every $n\in \N$,  $\theta \circ f_n \to \theta(0) =0$ a.e., and  $|\theta \circ f_n(x)|\leq \|\theta\|_{\infty}$ for each $x\in X$, so that  $\theta \circ f_n \to 0$ in $L^1(X, \nu)$. 
Moreover,  $\nabla (\theta \circ f_n)  = ( \theta' \circ f_n)\nabla f_n$, so that  $R\nabla (\theta \circ f_n)  =  ( \theta' \circ f_n)R\nabla f_n$. For every $n\in \N$ we have
$$\| R\nabla (\theta \circ f_n) -G\|_{L^1(X, \nu; X)} \leq \|( \theta' \circ f_n)( R\nabla f_n-G)\|_{L^1(X, \nu; X)} +    \| (( \theta' \circ f_n)-1)G\|_{L^1(X, \nu; X)}. $$
The first addendum in the right hand side does not exceed $\|\theta'\|_{\infty} \|R\nabla f_n-G\|_{L^1(X, \nu; X)} $, so that it vanishes as $n\to \infty$. The second addendum vanishes too by the Dominated Convergence Theorem, since $( \theta' \circ f_n)-1$ converges to $0$ a.e, and  $\| ( \theta' \circ f_n)(x)-1)G(x)\| \leq (\|\theta'\|_{\infty}  +1)\|G(x)\|$ for every $n$. 
Therefore, $(R\nabla (\theta \circ f_n))$ converges to  $G$ in $L^1(X, \nu; X)$. 

Now we fix any orthonormal basis $\{e_k:\; k\in \N\}$ of $X$. For every $k\in\N$ we have
$$ \int_X \langle R\nabla  (\theta \circ f_n), e_k\rangle  \,\psi\, d\nu = \int_X (\theta \circ f_n)\left( - \langle R\nabla \psi, e_i\rangle  + v_{e_k} \psi\right)  .$$
Letting $n\to\infty$, since $(R\nabla (\theta \circ f_n))$ converges to  $G$ in $L^1(X, \nu; X)$, the left hand side converges to $\int_X\langle G, e_i\rangle \psi\, d\nu$. The right hand side converges to $0$, since $\theta \circ f_n \to  =0$ a.e. and $| (\theta \circ f_n)\ ( - \langle R\nabla \psi, e_i\rangle  + v_{e_k} \psi )| \leq $
 $\|\theta\|_{\infty}(\|R\|_{\mathcal L(X)}  \|\nabla  \psi \|_{\infty} + |v_{e_k}(x)|) \|\psi\|_{\infty}$. Consequently, 
$$\int_X\langle G, e_k\rangle \,\psi \, d\nu =0, \quad k\in \N $$
so that $\langle G, e_k\rangle =0$ a.e. for every $k\in \N$. 

For $p>1$ the proof is a simplification of this one, since $ v_{e_k}$ belongs to $L^{p'}(X, \nu)$ for every $k$ and the argument works using the functions $f_n$ instead of $\theta \circ f_n$. 
 \end{proof}

%
%
\begin{Definition}
\label{defSobolev}
For $p\in [1, +\infty)$ we denote by $M_p$ the closure of $R\nabla$   in $L^p(X,\nu)$, and by $W^{1,p}(X, \nu)$ its domain.  
$W^{1,p}(X, \nu)$ is a Banach space with the graph norm, 
\begin{equation}
\label{graphnorm}
\|f\|_{W^{1,p}(X, \nu)} = \bigg( \int_X|f(x)|^p\nu(dx)\bigg)^{1/p} +  \bigg( \int_X\|M_pf(x)\|^p\nu(dx)\bigg)^{1/p}. 
\end{equation}
For every $f\in W^{1,p}(X, \nu)$ and $z\in X$ we set $ \langle M_pf, z\rangle :=\partial f/\partial R^*z$. 
\end{Definition}
 
So, any $\varphi\in W^{1,p}(X, \nu)$ is the $L^p(X, \nu)$-limit of a sequence $(\varphi_n) \subset C^1_b(X)$, such that $(R\nabla \varphi_n)$ is convergent  sequence in $L^p(X, \nu; X)$. The approximating sequence may be taken in ${\mathcal FC}^1_b(X)$,  since, in its turn,    each $\varphi\in C^1_b(X)$ may be approximated  in the above norm 
by a sequence of elements of ${\mathcal FC}^1_b(X)$,  by Lemma \ref{Le:approx}(i).

We refer to   \cite[Sect. 2]{TAMS} for general properties of the $W^{1,p}$ spaces, of the operators $M_p$ and of their adjoint operators. 
In particular, we recall that the dual  spaces  $(L^p(X, \nu))'$,  $(L^{p}(X,\nu;X))'$ are canonically identified with $L^{p'}(X,\nu)$, 
 $L^{p'}(X,\nu;X)$ respectively, with $p'=p/(p-1)$ for $p>1$, $1'=+\infty$ (e.g., \cite{DU}). We denote  by $M_p^*: D(M_p^*)\subset 
L^{p'}(X,\nu;X)\to L^{p'}(X,\nu)$  the adjoint of $M_p$. 
So, we have
\begin{equation}
\label{e1m}
\int_X \langle M_p\varphi,F  \rangle\,d\nu=\int_X \varphi\,M_p^*(F)\,d\nu,\quad \varphi\in D(M_p),\;F\in D(M_p^*).
\end{equation}
We notice that 
Hypothesis \ref{h1'} implies  that  for every 
 $z\in X$   the constant vector field $F_z(x) := z$ belongs 
to $D(M_p^*)$ for every $p>1$, and the function $X\mapsto D(M_p^*)$, $z\mapsto F_z$, is continuous. 
  Indeed, \eqref{e1a} and the definition of $M_p$ yield
 \begin{equation}
\label{e2m}
   \int_X  \langle M_p\varphi,z\rangle\, d\nu= \int_X\varphi\, v_z\,d\nu , \quad \varphi \in D(M_p), 
\end{equation}
%
%
%
so that $F_z\in D(M^*_p)$ and $M^*_p F_z = v_z$ for every $z\in X$. However, the functions $v_z$ are not essentially bounded and $F_z$ does not belong to 
$D(M^*1)$, in general.  

 If $u\in W^{1,p}(X, \nu)$ and $\varphi\in C^1_b(X)$, the product $u\varphi $ belongs to $W^{1,p}(X, \nu)$ and we have
 $M_p(u\varphi) = M_pu\,\varphi + u \,R\nabla \varphi$. So, if $p>1$,  \eqref{e2m} yields 
 \begin{equation}
\label{parti}
   \int_X u  \langle M_p\varphi,z\rangle  d\nu =   \int_X  (v_z u\,\varphi - \langle M_pu,z\rangle \varphi) d\nu, \quad z\in X, 
\end{equation}
which can be considered as an integration by parts formula. If $p=1$ the right hand side of  \eqref{parti}
is not meaningful, in general. For certain measures, such as Gaussian or suitably weighted Gaussian measures, 
$W^{1,1}(X, \nu)$ is continuously embedded in an Orlicz space $Y\subset L^{1}(X, \nu)$ such that all the functionals $u\mapsto \int_X v_zu\,d\nu$ are well defined and belong to $Y'$, and  \eqref{parti}
holds  for every $u\in W^{1,1}(X, \nu)$. 

Moreover, for $p>q\geq 1$ we have 
$$W^{1,p}(X, \nu)\subset W^{1,q}(X, \nu),\; M_p f= M_qf, \quad f\in W^{1,p}(X, \nu), $$
$$D(M_q^*) \subset D(M_p^*), \; M_q^* F = M_p^*F, \quad F\in D(M_q^*). $$
Therefore, to simplify notation, for functions $f\in \cap_{p\geq 1}W^{1,p}(X, \nu)$ we set $Mf:=M_pf$ for every $p\geq 1$, and for vector fields 
$F\in  \cap_{p>1}D(M_p^*)$ we set $M^*F:= M_p^*F$ for every $p>1$. 

Accordingly to the notation of subsection 1.1, for any subspace $Y\subset X$ we set
\begin{equation}
\label{W1ptilde}
\widetilde{W}^{1,p}(X, \nu;Y) := \{ F = \sum_{i=1}^n f_i\,z_i: \; n\in \N, \; f_i\in W^{1,p}(X, \nu), \; z_i\in Y\}  . 
\end{equation}
For every $p>1$, the vector fields in $\widetilde{W}^{1,p'}(X, \nu;X)$ belong to $D(M^*_p)$, and for $F= \sum_{i=1}^n f_iz_i $ we have 
 \begin{equation}
\label{divW}
M^*_pF  =  \sum_{i=1}^n (- \langle M_{p'}f_i, z_i\rangle + v_{z_i} f_i).  =  \sum_{i=1}^n (- \langle R\nabla f_i, z_i\rangle + v_{z_i} f_i) . 
\end{equation}
Therefore,  the vector fields in $ \widetilde{C}^1_b(X, X)$  belong to $D(M^*_p)$ for every $p>1$, and if 
$F= \sum_{i=1}^n f_iz_i \in  \widetilde{C}^1_b(X, X)$   we have
 \begin{equation}
\label{div}
M^*F  =  \sum_{i=1}^n (- \langle M_{p'}f_i, z_i\rangle + v_{z_i} f_i) =  \sum_{i=1}^n (- \langle R\nabla f_i, z_i\rangle + v_{z_i} f_i) . 
\end{equation}
Formula \eqref{div} yields 
 \begin{equation}
\label{Th:div}
\int_X \langle M_p \varphi, F\rangle d\nu =   \int_X \varphi \, M_p^*F \,d\nu =   \int_X  \sum_{i=1}^n \varphi (- \langle R\nabla f_i, z_i\rangle + v_{z_i} f_i)  \,d\nu, \quad \varphi\in  W^{1,p}(X, \nu). 
\end{equation}

The following properties of Sobolev spaces will be used later in the paper.

\begin{Lemma}
\label{Le:Sobolev}
Let $1<p <\infty$. 
\begin{itemize}
\item[(i)] For every $\varphi\in W^{1,p}(X, \nu) \cap L^{\infty}(X, \nu)$ and $\delta >0$ there exists a sequence of $C^1_b(X)$ functions $(\varphi_n)$ such that 
$\|\varphi_n\|_{\infty} \leq (1+\delta)\|\varphi\|_{\infty}$ and $\varphi_n\to \varphi$ in $W^{1,p}(X, \nu)$. 
\item[(ii)] If $\varphi$, $\psi\in W^{1,p}(X, \nu) \cap L^{\infty}(X, \nu)$, the product $\varphi\psi$ belongs to $W^{1,p}(X, \nu) $, and $M_p(\varphi\psi) = \varphi M_p \psi +  \psi M_p\varphi$. 
\end{itemize}
\end{Lemma}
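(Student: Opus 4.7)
My plan is to prove (i) by composing a standard approximating sequence with a smooth bounded truncation, and then deduce (ii) from (i) via the Leibniz rule for $C^1$ functions combined with the closedness of $M_p$.

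For part (i), set $M:=\|\varphi\|_{\infty}$. Fix once and for all a function $\theta\in C^1(\R)$ with $\theta(t)=t$ for $|t|\leq M$, $|\theta(t)|\leq(1+\delta)M$ for every $t$, $\|\theta'\|_{\infty}\leq 1$, and $\theta'$ continuous with $\theta'(t)=1$ for $|t|\leq M$. By definition of $W^{1,p}(X,\nu)$, choose $(\psi_n)\subset C^1_b(X)$ with $\psi_n\to\varphi$ in $L^p(X,\nu)$ and $R\nabla\psi_n\to M_p\varphi$ in $L^p(X,\nu;X)$, and pass to a subsequence so that $\psi_n\to\varphi$ a.e. Define $\varphi_n:=\theta\circ\psi_n$. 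Then $\varphi_n\in C^1_b(X)$, $\|\varphi_n\|_{\infty}\leq(1+\delta)M$, and since $\theta$ acts as the identity on the (essential) range of $\varphi$, $\varphi_n\to\varphi$ a.e.\ and, by dominated convergence, in $L^p(X,\nu)$. For the gradients, the chain rule gives $R\nabla\varphi_n=(\theta'\circ\psi_n)R\nabla\psi_n$, so
\begin{equation*}
\|R\nabla\varphi_n - M_p\varphi\|_{L^p(X,\nu;X)}\leq \|\theta'\|_{\infty}\|R\nabla\psi_n-M_p\varphi\|_{L^p(X,\nu;X)}+\|(\theta'\circ\psi_n-1)M_p\varphi\|_{L^p(X,\nu;X)}.
\end{equation*}
The first term tends to $0$ by construction. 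For the second, since $\theta'$ is continuous and equals $1$ on $[-M,M]$, and $\psi_n\to\varphi$ a.e.\ with $|\varphi|\leq M$, we have $\theta'\circ\psi_n\to 1$ a.e., while the integrand is dominated by $(1+\|\theta'\|_{\infty})\|M_p\varphi\|\in L^p$, so dominated convergence finishes the job.

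For part (ii), apply (i) (with any fixed $\delta>0$) to both $\varphi$ and $\psi$, obtaining $(\varphi_n),(\psi_n)\subset C^1_b(X)$ with uniformly bounded sup norms, converging to $\varphi,\psi$ in $W^{1,p}(X,\nu)$ and, after a subsequence, a.e. On $C^1_b(X)$ we have the pointwise Leibniz rule $R\nabla(\varphi_n\psi_n)=\psi_n\,R\nabla\varphi_n+\varphi_n\,R\nabla\psi_n$. The product $\varphi_n\psi_n$ lies in $C^1_b(X)$ and converges to $\varphi\psi$ in $L^p(X,\nu)$ by splitting $\varphi_n\psi_n-\varphi\psi=\varphi_n(\psi_n-\psi)+(\varphi_n-\varphi)\psi$ and using the uniform $L^\infty$ bounds. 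Similarly,
\begin{equation*}
\|\psi_n R\nabla\varphi_n-\psi M_p\varphi\|_{L^p}\leq\|\psi_n\|_{\infty}\|R\nabla\varphi_n-M_p\varphi\|_{L^p}+\|(\psi_n-\psi)M_p\varphi\|_{L^p},
\end{equation*}
where the first term vanishes by the uniform bound on $\|\psi_n\|_{\infty}$, and the second term vanishes by dominated convergence (using the a.e.\ convergence of $\psi_n$ and the integrable majorant $(1+\delta)\|\psi\|_{\infty}\|M_p\varphi\|$). Symmetrically, $\varphi_n R\nabla\psi_n\to \varphi M_p\psi$ in $L^p(X,\nu;X)$. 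By closedness of $M_p$ we conclude that $\varphi\psi\in W^{1,p}(X,\nu)$ and $M_p(\varphi\psi)=\varphi M_p\psi+\psi M_p\varphi$.

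The only subtle point is in (i): the truncation $\theta$ must both control the sup norm of $\varphi_n$ and not destroy the limit of the gradient on the level set $\{|\varphi|=M\}$, which is why $\theta'$ is required to equal $1$ on the whole closed interval $[-M,M]$ (and to extend continuously). Once $\theta$ is chosen this way, everything reduces to two routine applications of dominated convergence, and part (ii) is then an easy consequence combined with the $C^1$ product rule.
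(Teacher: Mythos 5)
Your proof is correct and follows essentially the same route as the paper: truncate an approximating $C^1_b$ sequence by composing with a bounded $C^1$ function $\theta$, handle the gradient via the chain rule and two applications of dominated convergence, and deduce (ii) from (i) by the Leibniz rule and closedness of $M_p$. The only cosmetic difference is that the paper takes $\theta$ equal to the identity on the slightly larger interval $[-(1+\delta/2)\|\varphi\|_\infty,(1+\delta/2)\|\varphi\|_\infty]$, so that $\theta'\circ f_n=1$ eventually a.e., whereas you rely on continuity of $\theta'$ at the endpoints of $[-M,M]$; both work.
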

\begin{proof}
To prove Statement (i) we consider a $C^1$ function $\theta:\R\mapsto \R$ such that $\|\theta\|_{\infty} = (1+\delta)\|\varphi\|_{\infty}$ and 
$$  \theta(\xi ) = \left\{ \begin{array}{ll}
(1+\delta)\|\varphi\|_{\infty}, & \xi \geq (1+2\delta)\|\varphi\|_{\infty}, 
\\
\xi, & |\xi| \leq (1+\delta /2)\|\varphi\|_{\infty}, 
\\
(-1-\delta)\|\varphi\|_{\infty}, & \xi\leq (-1-2\delta)\|\varphi\|_{\infty}. 
\end{array} \right. $$
Let $(f_n)$ be a sequence of $C^1_b(X)$ functions that converges to $\varphi$ in $W^{1,p}(X, \nu)$ and almost everywhere, and set 
$$\varphi_n:= \theta \circ f_n, \quad n\in \N. $$
Then $\varphi_n \in C^1_b(X)$ and  $\|  \varphi_n\|_{\infty} \leq (1+\delta)\|\varphi\|_{\infty}$. Moreover, since  $f_n(x) \to \varphi(x) $ as $n\to \infty$ for a.e. $x$, we have 
$|f_n(x)| \leq (1+\delta/2 )\|\varphi\|_{\infty}$ for $n$ large enough, so that $ \varphi_n(x) = f_n(x)\to \varphi(x)$ as $n\to \infty$ for a.e. $x$. Since $|  \varphi_n(x) -\varphi(x)|\leq (2+\delta ) \|\varphi\|_{\infty}$ for a.e. $x\in X$, by the Dominated Convergence Theorem we obtain 
$\lim_{n\to \infty}  \varphi_n = \varphi$ in $L^{p}(X, \nu)$. 
Moreover, $\nabla \varphi_n = (\theta'\circ f_n)\nabla f_n$, so that 
$$\|R\nabla  \varphi_n - R\nabla \varphi\|_{L^{p}(X, \nu; X)} \leq \| ((\theta'\circ f_n)-1)R\nabla \varphi\|_{L^{p}(X, \nu; X)} + \| (\theta'\circ f_n)(R\nabla \varphi - R\nabla f_n)\|_{L^{p}(X, \nu; X)}. $$
As before, for a.e. $x\in X$ we have $\theta'(f_n(x)) = 1$ for large enough $n$, so that $(\theta'\circ f_n)(x)-1\to 0$ as $n\to \infty$, and by the Dominated Convergence Theorem 
$\lim_{n\to \infty}\| ((\theta'\circ f_n)-1)R\nabla \varphi\|_{L^{p'}(X, \nu; X)} =0$. Since also $\lim_{n\to \infty} \| (\theta'\circ f_n)(R\nabla \varphi - R\nabla f_n)\|_{L^{p}(X, \nu; X)}=0$, 
Statement (i) follows. 

\vspace{3mm}

To prove Statement (ii) we use (i), approaching $f$ and $g$ in $W^{1,p}(X, \nu)$ and $\nu$-a.e. by sequences of $C^1_b(X)$ functions $(f_n)$, $(g_n)$ such that $\|f_n\|_{\infty} \leq 2\|f\|_{\infty}$, $\|g_n\|_{\infty} \leq 2\|g\|_{\infty}$. The product $f_ng_n$ converges to $fg$ in $L^p(X, \nu)$ by the Dominated Convergence Theorem; moreover we have $R\nabla (f_ng_n) = f_n R\nabla g_n + g_n R\nabla f_n$, and
$$\|f_n R\nabla g_n - fM_pg\|_{L^p(X, \nu;X)} \leq \|(f_n-f) M_pg\|_{L^p(X, \nu;X)} + \|f( R\nabla g_n -  M_pg)\|_{L^p(X, \nu;X)} $$
where the first addendum in the right hand side vanishes as $n\to \infty$ again by the Dominated Convergence Theorem, and the second addendum vanishes too. 
Similarly, $\|g_n R\nabla f_n - gM_pf\|_{L^p(X, \nu;X)}\to 0$ as $n\to \infty$. Therefore, $R\nabla (f_ng_n)\to fM_pg+gM_pf$ in  $L^p(X, \nu;X)$, and (ii) is proved. 
 \end{proof}

\begin{Remark}
\label{Sobolevgenerali}
{\em If    there exists an orthonormal basis $\{e_k:\; k\in \N\}$ of $X$ contained in  $R^*(X)$,  it is possible to define different classes of Sobolev spaces. Indeed, in this case 
each partial derivative $\partial /\partial e_k:C^1_b(X)\mapsto L^p(X, \nu)$ is closable in $L^p(X, \nu)$ (since   $\partial \varphi/\partial e_k = \langle R\nabla \varphi, z\rangle$ for any $z$ such that $R^*z=e_k$), and therefore for any bounded nonnegative sequence $(\alpha_k)$ the operator 
$C^1_b(X)\mapsto L^p(X, \nu;X)$, $\varphi\mapsto \sum_{k^1}^{\infty} (\alpha_k \partial \varphi/\partial e_k) e_k$ is closable in $L^p(X, \nu)$. 
The domain of the closure $M_{(\alpha_k)}$, endowed with the graph norm,  is still a Sobolev space. Given two sequences $(\alpha_k)$, $(\beta_k)$, the respective Sobolev spaces have equivalent norms iff there exists $C\geq 1$ such that $C^{-1} \beta_k \leq \alpha_k \leq C\alpha_k$, for every $k\in \N$. 
The case $\alpha_k=1$ for each $k$ is  of particular interest, since $M_{(\alpha_k)}$ is just the closure of the gradient  in $L^p(X, \nu)$.  We shall consider it in Subsection \ref{subs:other}.}
\end{Remark}

In the sequel we shall consider also Sobolev spaces of order $2$. To define them, we use the following easy lemma. We recall that ${\mathcal L}_2 (X)$ is the subspace of 
$\mathcal L (X)$ consisting of the Hilbert-Schmidt operators, endowed with the norm $\|L\|_{{\mathcal L}_2 (X)} := \left( \sum_{h, k=1}^{\infty} \langle Le_h, e_k\rangle^2\right)^{1/2}$, where $\{e_k: \; k\in \N\}$ is any orthonormal basis of $X$.

\begin{Lemma}
\label{Le:chiusura}
For every $p>1$ the operator 
$${\mathcal D}^2_R:D({\mathcal D}^2_R)= {\mathcal FC}^2_b(X)\subset L^p(X, \nu) \mapsto (L^p(X, \nu;X) \times L^p(X, \nu; \mathcal L _2(X))), $$
$${\mathcal D}^2_R\varphi(x) := (R\nabla \varphi, RD^2\varphi(x)R^*)$$ 
is closable in $L^p(X, \nu)$. 
\end{Lemma}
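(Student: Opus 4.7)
The plan is to follow the standard closability recipe: take a sequence $(\varphi_n) \subset \mathcal{FC}^2_b(X)$ with $\varphi_n \to 0$ in $L^p(X,\nu)$ such that $\mathcal{D}^2_R\varphi_n \to (G,H)$ in $L^p(X,\nu;X) \times L^p(X,\nu;\mathcal{L}_2(X))$, and show $G = 0$ and $H = 0$.

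First, the component $G = 0$ comes for free from the preceding proposition: since $R\nabla\varphi_n \to G$ in $L^p(X,\nu;X)$ while $\varphi_n \to 0$ in $L^p(X,\nu)$, the already-proved closability of $R\nabla$ gives $G = 0$. So the real work is the second component.

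For $H = 0$, I fix an orthonormal basis $\{e_k : k\in\N\}$ of $X$ and show that each scalar entry $H_{h,k} := \langle He_h, e_k\rangle$ vanishes $\nu$-a.e. The crucial observation is the ``$R\nabla$ of $R\nabla$'' identity: for $\varphi\in\mathcal{FC}^2_b(X)$ the function $g := \langle R\nabla\varphi, e_h\rangle = \partial_{R^*e_h}\varphi$ again lies in $\mathcal{FC}^1_b(X)$, and
$$\langle RD^2\varphi\, R^*e_h,\, e_k\rangle = \langle R\nabla g,\, e_k\rangle.$$
Applying Hypothesis \ref{h1'} to the product $g_n\psi$ for $\psi\in\mathcal{FC}^1_b(X)$ and $g_n:=\langle R\nabla\varphi_n, e_h\rangle$, the Leibniz rule gives the integration by parts formula
$$\int_X \langle RD^2\varphi_n R^* e_h, e_k\rangle\,\psi\,d\nu = \int_X g_n\bigl(v_{e_k}\psi - \langle R\nabla\psi, e_k\rangle\bigr)\,d\nu.$$

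Now I pass to the limit $n\to\infty$. The left-hand side converges to $\int_X H_{h,k}\psi\,d\nu$ since the $(h,k)$-component of $RD^2\varphi_n R^*$ converges to $H_{h,k}$ in $L^p(X,\nu)$ and $\psi$ is bounded. The right-hand side converges to zero: indeed $g_n \to 0$ in $L^p(X,\nu)$ because $R\nabla\varphi_n \to G = 0$, and the factor $v_{e_k}\psi - \langle R\nabla\psi, e_k\rangle$ belongs to $L^{p'}(X,\nu)$ since $v_{e_k}\in L^{q}(X,\nu)$ for every $q>1$ by Hypothesis \ref{h1'} and $\psi, \nabla\psi$ are bounded. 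Thus $\int_X H_{h,k}\psi\,d\nu = 0$ for every $\psi\in\mathcal{FC}^1_b(X)$, and density of this class in $L^{p'}(X,\nu)$ forces $H_{h,k} = 0$ a.e. Doing this for every $h,k$ and using $\|H\|_{\mathcal{L}_2(X)}^2 = \sum_{h,k}H_{h,k}^2$ yields $H = 0$.

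The only delicate point is the second integration by parts, which is what makes the argument work in two stages (first use closability of $R\nabla$ to identify $G = 0$, then use Hypothesis \ref{h1'} once more to push the second derivative off $\varphi_n$); the $L^p$-integrability of $v_{e_k}$ for all $p$ built into Hypothesis \ref{h1'} is exactly what is needed to make the remainder term manageable when $p$ is close to $1$.
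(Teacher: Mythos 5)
Your proposal is correct and follows essentially the same route as the paper's proof: both identify the first component as zero from the already-established closability of $R\nabla$, and both kill the second component entry by entry by testing $\langle H e_h, e_k\rangle$ against a dense class of smooth functions and integrating by parts once via Hypothesis \ref{h1'}, using that $v_{e_k}\in L^{p'}(X,\nu)$ and that $\partial\varphi_n/\partial R^*e_h\to 0$ in $L^p(X,\nu)$. The only cosmetic differences are your choice of $\mathcal{FC}^1_b(X)$ rather than $C^2_b(X)$ as the class of test functions $\psi$, and the fact that the paper treats the degenerate case $R^*e_h=0$ or $R^*e_k=0$ separately while your formulation absorbs it automatically.
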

\begin{proof}
Let $\varphi_n\in {\mathcal FC}^2_b(X)$ be such that $\lim_{n\to\infty} \varphi_n=0$ in $L^p(X, \nu)$, $\lim_{n\to \infty} {\mathcal D}^2_R\varphi_n = (F, L)$ in 
$ (L^p(X, \nu;X) \times L^p(X, \nu; \mathcal L _2(X))$. We already know that $F=0$, since $R\nabla $ is closable in $L^p(X, \nu)$ even with the bigger domain $C^1_b(X)$. 
We have to show that $L =0$ (as an element of $L^p(X, \nu; \mathcal L _2(X))$), which is equivalent to $\langle L(\cdot)e_h, e_k\rangle =0$, $\nu$-a.e.,   for every $h$, $k\in \N$. Since $C^2_b(X)$ is dense in $L^{p'}(X, \nu)$ it is sufficient to show that for every $h, \, k\in \N$ we have 
 \begin{equation}
\label{serve}
\int_X \langle L(\cdot)e_h, e_k\rangle \psi\,d\nu =0, \quad  \psi \in C^2_b(X). 
\end{equation}
For every $h, k\in \N$ and $\psi \in C^2_b(X)$ we have 
$$\int_X \langle L(x)e_h, e_k\rangle \psi\,\nu(dx) = \lim_{n\to\infty} \int_X \langle RD^2\varphi_n(x)R^*e_h, e_k\rangle \psi\,\nu(dx). $$
If $R^*e_h$ or $R^*e_k$ vanish, the right hand side is zero and \eqref{serve} holds. If both $R^*e_h$, $R^*e_k$ are different from $0$, using \eqref{parti}   we get 
$$\begin{array}{l}
\ds \int_X \langle RD^2\varphi_n(x)R^*e_h, e_k\rangle \psi\,\nu(dx) =  \int_X\frac{\partial}{\partial R^*e_k} \bigg(\frac{\partial \varphi_n}{\partial R^*e_h} \bigg)  \psi\,\nu(dx) 
\\
\\
\ds = - \int_X\frac{\partial \psi}{\partial R^*e_k}  \frac{\partial \varphi_n}{\partial R^*e_h} \, \nu(dx) + \int_X v_{e_k}  \frac{\partial \varphi_n}{\partial R^*e_h} \, \psi\, \nu(dx)
\end{array}$$
 for every $n\in \N$. Since  $v_{e_k}\in L^{p'}(X, \nu)$,   $\partial \psi/\partial R^*e_k\in C_b(X)$, 
 $\lim_{n\to\infty}  \varphi_n =0$ and $ \lim_{n\to\infty}\partial \varphi_n/\partial R^*e_h$ $ =  \lim_{n\to\infty} \langle R\nabla \varphi_n, e_h\rangle =0$ in 
$L^p(X, \nu)$, letting $n\to\infty$ yields \eqref{serve}. 
\end{proof}
 
The closure of  the operator ${\mathcal D}^2_R $ in $L^p(X, \nu)$ is denoted by $(M_p, M^2_p)$.  
The Sobolev space $W^{2,p}(X, \nu)$ is defined as the domain of the closure of  such operator. So, any $\varphi \in W^{2,p}(X, \nu)$ is the $L^p(X, \nu)$-limit of a sequence $L^p(X, \nu)$-limit of a sequence $(\varphi_n) \subset C^2_b(X)$, such that $(R\nabla \varphi_n)$ and $(RD^2 \varphi_nR^*)$ are convergente sequences in 
  $L^p(X, \nu; X)$ and in $L^p(X, \nu; \mathcal L _2(X))$, respectively. In particular,  $\varphi \in W^{1,p}(X, \nu)$ and the sequence $(R\nabla \varphi_n)$ converges to $M_p\varphi$ in   $L^p(X, \nu; X)$.   $M^2_p\varphi$ is the $L^p(X, \nu; \mathcal L _2(X))$-limit of the sequence $(RD^2 \varphi_nR^*)$.

Moreover, $W^{2,p}(X, \nu)$  is a Banach space with the graph norm
$$\|\varphi\|_{W^{2,p}(X, \nu)} := \|\varphi\|_{L^p(X, \nu)} + \|M_p\varphi\|_{L^p(X, \nu; X)} + \|M^2_p\varphi\|_{L^p(X, \nu; \mathcal L_2(X))}  . $$
 If $\{e_k: \; k\in \N\}$ is any orthonormal basis of $X$ and we set $\partial_h \varphi := \langle M_p\varphi, e_h\rangle$, $\partial^2_{hk} \varphi := \langle M^2_p\varphi e_k, e_h\rangle$, we obtain 
$$\|\varphi\|_{W^{2,p}(X, \nu)} = \|\varphi\|_{L^p(X, \nu)} + \int_X \left( \sum_{h=1}^{\infty} (\partial_h \varphi)^2\right)^{1/p}d\nu + \int_X \left( \sum_{h, k=1}^{\infty} (\partial^2_{h k}\varphi)^2\right)^{1/p}d\nu . $$
Notice that it is not convenient to define the operator ${\mathcal D}^2_R $ in $C^2_b(X)$, because in general $RD^2\varphi (x)R^* \notin {\mathcal L}_2(X)$ for $\varphi\in C^2_b(X)$.

\subsection{Real measures and vector measures.}

We refer to \cite{BogaDiff} for a general treatment of differentiable measures, and to \cite{BogaMeas} for general measure theory. A good reference for vector valued measures is \cite{DU}.

We denote by ${\mathcal B}(X)$ the $\sigma$-algebra of the Borel sets of $X$. A real valued Borel measure is any countably additive function $m: {\mathcal B}(X) \mapsto \R$. For such $m$ we define a nonnegative measure $|m|$ by
$$|m|(B)=   \sup \;  \sum_{n=1}^{\infty} |m(B_n)|,  \quad B\in {\mathcal B}(X), $$
where the supremum is taken over all the at most countable partitions of $B$ into pairwise disjoint Borel sets $B_n$. If $A$ is any open set, we have
$$|m|(A) =   \sup \bigg\{ \int_X f\, dm: \; f\in C_b(X), \; f_{|X\setminus A}\equiv 0, \; \|f\|_{\infty} \leq 1\bigg\}.$$
$|m|(X)$ is called the total variation of $m$. The space of all the real Borel measures in $X$  is denoted by 
${\mathcal M}(X,\R)$, the map $m\mapsto |m|(X)$ is a norm in ${\mathcal M}(X,\R)$.

We shall be concerned also with $X$-valued Borel measures, namely the countably additive functions $m: {\mathcal B}(X) \mapsto X$. As in the real case, for every  $X$-valued Borel measure $m$ we define a nonnegative measure $|m|$ by
$$|m|(B)=   \sup  \sum_{n=1}^{\infty} \|m(B_n)\|,   $$
where the supremum is taken over all the at most countable partitions of $B$ into pairwise disjoint Borel sets $B_n$, 
and $|m|(X)$ is called the total variation of $m$. It is not hard to see that
\begin{equation}
\label{vartot}
|m|(X) = \sup \left\{ \int_X \langle F, dM\rangle :\; F\in C^1_b(X, X), \; \|F\|_{\infty} \leq 1 \right\}. 
\end{equation}
We denote by ${\mathcal M}(X,X)$   the space of all $X$-valued Borel measures with finite total variation. Every $m\in 
 {\mathcal M}(X,X)$ is absolutely continuous with respect to $|m|$, and it may be written as 
\begin{equation}
\label{polar}
m(dx)= \sigma(x) |m|(dx), 
\end{equation}
where $\sigma :X\mapsto X$ is a $|m|$-measurable unit vector field (namely, $\|\sigma (x)\| =1$ for $|m|$-a.e. $x\in X$). 
 
We shall use the following lemma about vector measures. 

\begin{Lemma}
\label{supmisure vett} 
Let $\{e_k:\;k\in \N\}$ be an orthonormal basis in $X$, and for every $k\in \N$ let $\mu_k$ be a real valued Borel measure, such that setting
$$M_n (B) = \sum_{k=1}^n \mu_k(B)e_k, \quad n\in \N, \; B\in \mathcal B(X), $$
we have
$$\sup_{n\in \N} |M_n|(X) := C<+\infty . $$
Then there exists a vector measure $M\in \mathcal M(X, X)$ such that $|M|(X)\leq C$ and $\langle M(B), e_k\rangle  = \mu_k(B)$ for every $k\in \N$, so that we have the representation
$$M(B) = \sum_{k=1}^{\infty} \mu_k(B) e_k, \quad B\in \mathcal B(X). $$
\end{Lemma}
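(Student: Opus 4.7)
The plan is to define $M(B) := \sum_{k=1}^{\infty} \mu_k(B)\, e_k$ for every $B \in \mathcal{B}(X)$ and then verify, in order, that the defining series converges in $X$, that $M$ has finite total variation bounded by $C$, and that $M$ is countably additive.

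First, for any $B$ and any $n$, the orthonormality of the $e_k$ gives
$$\sum_{k=1}^n \mu_k(B)^2 \;=\; \|M_n(B)\|^2 \;\leq\; |M_n|(B)^2 \;\leq\; |M_n|(X)^2 \;\leq\; C^2.$$
Letting $n\to\infty$ yields $\sum_{k=1}^{\infty}\mu_k(B)^2 \leq C^2$, so the series $\sum_k \mu_k(B)e_k$ converges in $X$. Call its sum $M(B)$; by construction $\langle M(B),e_k\rangle = \mu_k(B)$ for every $k$, and finite additivity of $M$ follows from the finite additivity of each $\mu_k$ together with the linearity of the partial sums defining $M(B)$.

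Next I would bound $|M|(X)$. Fix an arbitrary countable disjoint partition $B = \bigsqcup_j B_j$ of a Borel set $B$. The key observation is that, for each fixed $j$, $\|M_n(B_j)\|^2 = \sum_{k=1}^n \mu_k(B_j)^2$ is non-decreasing in $n$ with limit $\|M(B_j)\|^2$, so $\|M_n(B_j)\| \uparrow \|M(B_j)\|$ as $n\to\infty$. Monotone convergence for sums of non-negative increasing terms then gives
$$\sum_j \|M(B_j)\| \;=\; \lim_{n\to\infty}\sum_j \|M_n(B_j)\| \;\leq\; \sup_{n\in\N}|M_n|(X) \;\leq\; C,$$
and taking the supremum over all countable partitions of $B=X$ yields $|M|(X)\leq C$.

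Countable additivity follows essentially for free from the variation bound: given disjoint $B_j$ with union $B$, the estimate $\sum_j \|M(B_j)\|\leq C$ shows that $\sum_j M(B_j)$ converges absolutely in $X$ to some $S$. For each $k\in\N$, continuity of $\langle\,\cdot\,,e_k\rangle$ and countable additivity of the real measure $\mu_k$ give
$$\langle S,e_k\rangle \;=\; \sum_j \langle M(B_j),e_k\rangle \;=\; \sum_j \mu_k(B_j) \;=\; \mu_k(B) \;=\; \langle M(B),e_k\rangle,$$
so $S=M(B)$, since the coordinates with respect to the orthonormal basis $\{e_k\}$ determine the vector. The only potentially delicate step is the interchange of limits in the variation estimate, but the monotonicity in $n$ of $\|M_n(B_j)\|$ reduces it to a direct application of monotone convergence; after that, countable additivity is almost immediate and no appeal to Orlicz--Pettis or other deep vector-measure machinery is needed.
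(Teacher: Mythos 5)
Your proof is correct, and it takes a genuinely different route from the paper's. The paper first builds a controlling finite positive measure $\Sigma(B):=\sup_n|M_n|(B)$, invokes the Radon--Nikodym theorem to write each $\mu_k$ as $\rho_k\,d\Sigma$, and derives the bound $\int_X\bigl(\sum_{k}\rho_k^2\bigr)^{1/2}d\Sigma\leq C$, from which convergence of the series and the variation bound follow; this has the side benefit of exhibiting $M$ as $\int_B\rho\,d\Sigma$ with an explicit $X$-valued density, a representation in the spirit of \eqref{polar}. You instead work directly with the partial sums: orthonormality gives $\sum_{k\leq n}\mu_k(B)^2=\|M_n(B)\|^2\leq|M_n|(X)^2\leq C^2$, hence convergence of the defining series, and the monotone increase of $\|M_n(B_j)\|$ in $n$ lets you pass to the limit in $\sum_j\|M_n(B_j)\|\leq|M_n|(X)\leq C$ by monotone convergence for series, yielding $|M|(X)\leq C$; countable additivity then comes from absolute convergence of $\sum_j M(B_j)$ together with the coordinatewise identification via $\langle\cdot,e_k\rangle$. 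Your argument is more elementary and self-contained (no Radon--Nikodym, no controlling measure), at the cost of not producing the density representation; both establish everything the lemma asserts, and each step of yours checks out, including the two points that need care: the monotonicity in $n$ that justifies the interchange of limit and sum, and the fact that absolute convergence in a Hilbert space gives the unconditional convergence required for countable additivity of a vector measure.
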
 
\begin{proof} For every Borel set $B$ set
$$\Sigma (B) := \sup_{n\in \N} |M_n|(B) = \lim_{n\to \infty}  |M_n|(B). $$
it is easily seen that $\Sigma $ is finitely additive and countably subadditive, and therefore it is countably additive. Each $\mu_k$  is absolutely continuous with respect to $\Sigma $;  denoting by $\rho_k$   the respective densities we have 
$$M_n(B) =  \sum_{k=1}^{n} \mu_k(B) e_k = \int_B  \sum_{k=1}^{n}\rho_k(x) e_k\, d\Sigma ,\quad n\in \N,  $$
so that, taking $B=X$, 
$$|M_n|(X) =  \int_X  \left\| \sum_{k=1}^{n}\rho_k(x) e_k\right\|  d\Sigma  = \int_X \left( \sum_{k=1}^{n} \rho_k(x)^2\right)^{1/2} d\Sigma  ,\quad n\in \N, $$
which implies, letting $n\to \infty$,  
$$\int_X \left( \sum_{k=1}^{\infty } \rho_k(x)^2\right)^{1/2} d\Sigma \leq C. $$
Therefore, for every Borel set $B$ the series $M_n(B) = \sum_{k=1}^n  \int_X \rho_k(x) \Sigma(dx) e_k$ converges in $X$, and setting 
$$M (B) = \lim_{n\to \infty} M_n(B) = \sum_{k=1}^{\infty} \mu_k(B) e_k,  $$
the vector measure $M$ enjoys all the claimed properties. 
\end{proof}

\section{$BV$ functions}
\label{sectionBV}

Let $u\in W^{1,p}(X, \nu)$ for some $p > 1$. For every $z\in X$ we rewrite \eqref{parti}  as
 \begin{equation}
\label{parti0}
\int_X u(\langle R\nabla \varphi, z\rangle - v_z \varphi)d\nu = - \int_X \langle M_pu,z\rangle \varphi \,d\nu, \quad \varphi\in C^1_b(X) . 
\end{equation}
The right hand side may be read as  minus the integral of $\varphi$ with respect to the real measure $m_z:=  \langle M_pu,z\rangle \nu$.  The total variation of such a measure is estimated by
$$|m_z|(X) = \|\langle M_pu,z\rangle\|_{L^1(X, \nu)} \leq \|M_pu\|_{L^1(X, \nu;X)} \|z\|. $$
In addition, setting
$$m(B) := \int_B M_pu \, d\nu, \quad B\in {\mathcal B}(X), $$
(the right hand side being an $X$-valued  integral), $m$ is a $X$-valued Borel measure with  total variation equal to $\|M_pu\|_{L^1(X, \nu)}$, such that  $\langle m(B), z\rangle = m_z(B)$ for every $z\in X$ and $B\in {\mathcal B}(X)$. 

$BV$ functions are defined in order to generalize the above formulae. More precisely,

\begin{Definition}
\label{def:BV}
Let $u\in L^1(X, \nu)$ be such that $uv_z\in L^1(X, \nu)$ for every $z\in X$. We say that $u\in BV(X, \nu)$  if there exists a measure  $m\in {\mathcal M}(X, X)$ such that, setting 
\begin{equation}
\label{mz}
m_z(B) := \langle m(B), z\rangle , \quad z\in X, \;B\in {\mathcal B}(X), 
\end{equation}
 we have
\begin{equation}
\label{BV}
\int_X u(\langle R\nabla \varphi, z\rangle - v_z \varphi)d\nu = - \int_X\varphi\,dm_z, \quad z\in X, \; \varphi\in C^1_b(X). 
\end{equation}
\end{Definition}
 
Recalling  formula \eqref{polar}, the definition of $BV$ function may be rephrased as follows: $u\in BV(X, \nu)$  if 
there exist a nonnegative measure   $ |m| \in {\mathcal M}(X, \R)$, and a $ |m|$-measurable vector field  $\sigma$ such that  $\|\sigma(x)\| = 1$ for $ |m| $-a.-e. $x\in X$, such that for every $z\in X\setminus \{0\}$ we have 
\begin{equation}
\label{BV1}
\int_X u(\langle R\nabla \varphi, z\rangle - v_z \varphi)d\nu = - \int_X\varphi\,\langle \sigma(x), z\rangle  |m| (dx), \quad \varphi\in C^1_b(X) . 
\end{equation}

If $u\in L^1(X, \nu)$ and $uv_z\in L^1(X, \nu)$ for every $z\in X$,  we set 
\begin{equation}
\label{V(u)} 
V(u)  :=   \sup \bigg\{ \int_X u \,M^* F \, d\nu: \; F\in \widetilde{C}^1_b(X ,X), \; \|F(x)\| \leq 1\; \forall x\in X\bigg\}
\end{equation}
We aim to prove that $u\in BV(X, \nu)$ iff $V(u)<\infty$. This will be done at the end of this section (Theorem \ref{Th:BV}).  To avoid the difficulties in handling vector valued measures, we will use  real valued measures: more precisely, to check whether a function $u$ belongs to $BV(X, \nu)$, it is convenient to look for  measures $m_z\in {\mathcal M}(X, \R)$ that satisfy \eqref{mz}, and then to recover the measure $m\in {\mathcal M}(X,X)$ of Definition \ref{def:BV}  from them. Therefore, we introduce the notion of $BV_z $ functions, as follows.

\begin{Definition}
\label{def:BVz}
Let $u\in L^1(X, \nu)$ and $z\in X$ be such that   $uv_z\in L^1(X, \nu)$. We say that $u\in BV_z(X, \nu)$ if there exists a real Borel measure $m_z\in {\mathcal M}(X, \R)$ such that 
\begin{equation}
\label{BVz}
\int_X u(\langle R\nabla \varphi, z\rangle - v_{z} \varphi)d\nu = -\int_X\varphi\,dm_z , \quad \varphi \in C^1_b(X). 
\end{equation}
\end{Definition}
 
We notice that if   $R^*z = 0$, every $u\in  L^1(X, \nu)$ such that   $uv_z\in L^1(X, \nu)$ belongs to $BV_z(X, \nu)$, since 
$\langle R\nabla \varphi, z\rangle =0$ for each $\varphi\in C^1_b(X)$, so that we can take  $m_z = uv_{z}\nu$ in \eqref{BVz}. So, from now on we may assume that $R^*z \neq  0$.
In any case we set 
\begin{equation}
\label{partial*z}
\partial_z^*\varphi (x) = \langle R\nabla \varphi, z\rangle - v_{z} \varphi,  \quad \varphi\in C^1_b(X), 
\end{equation}
\begin{equation}
\label{Tz}
T_z\varphi := \int_X u \,\partial_z^*\varphi \,d\nu, \quad \varphi \in C^1_b(X), 
\end{equation}
and 
\begin{equation}
\label{V_z(u)}
V_z(u)   :=    \sup \bigg\{ T_z\varphi: \; \varphi \in C^1_b(X), \; \|\varphi\|_{\infty} \leq 1 \bigg\} =
\ds \sup \bigg\{ \int_X u\,\partial_z^*\varphi \,d\nu : \; \varphi \in C^1_b(X), \; \|\varphi\|_{\infty} \leq 1 \bigg\}. 
\end{equation}
So, $u\in BV_z(X, \nu)$ iff there exists $m_z\in {\mathcal M}(X, \R)$   such that $T_z\varphi = -\int_X \varphi\,dm_z$, for every $\varphi\in C^1_b(X)$. In this case, $T_z$ has an obvious extension to the whole $C_b(X)$, whose norm (as an element of the dual space $(C_b(X))'$) is equal to $|m_z|(X)$. On the other hand, by Lemma \ref{Le:approx}(i) and the Dominated Convergence Theorem, the space $\in C^1_b(X)$ is dense in $C_b(X)$ endowed with the $L^1(X, \nu)$ norm. Therefore, 
\begin{equation}
\label{norma}
\begin{array}{lll}
|m_z|(X) & = & \ds \sup  \bigg\{ \int_X \varphi  \,dm_z : \; \varphi \in C_b(X), \; \|\varphi\|_{\infty} \leq 1 \bigg\} 
\\
\\
&= &
\ds  \sup  \bigg\{ \int_X \varphi  \,dm_z  : \; \varphi \in C^1_b(X), \; \|\varphi\|_{\infty} \leq 1 \bigg\} 
  \\
  \\
  & = & \ds \sup \bigg\{ \int_X u\,\partial_z^*\varphi \,d\nu : \; \varphi \in C^1_b(X), \; \|\varphi\|_{\infty} \leq 1 \bigg\} 
\\
\\
& = & V_z(u). 
\end{array}
\end{equation}
So, if $u\in BV_z(X, \nu)$ then $V_z(u)<\infty$. The converse holds too, and it is the main result of this section. 
 
\begin{Theorem}
\label{carBVz}
Let   $u\in L^1(X, \nu)$, and let $z\in X$ be such that $R^*z\neq 0$ and $uv_z\in L^1(X, \nu)$. 
 Then $u\in BV_z(X, \nu)$ if and only if $V_z(u) <\infty$. In this case we have  $  |m_z|(X) = V_z(u)$. 
\end{Theorem}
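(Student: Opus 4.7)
The direction $u\in BV_z(X,\nu)\Rightarrow V_z(u)<\infty$ (with $|m_z|(X)=V_z(u)$) has essentially been carried out in the computation \eqref{norma}. So the substance of the theorem is the converse, and I would follow the two-step blueprint already announced in the introduction.

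\medskip

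\textbf{Step 1 (compactly supported $u$).} I first treat the case where $u$ vanishes outside a closed ball $B(0,r)$. The estimate
$$|T_z\varphi|\leq V_z(u)\,\|\varphi\|_{L^\infty(B(0,r))},\qquad \varphi\in C^1_b(X),$$
follows at once from the support hypothesis on $u$. I then equip $X$ with the weak topology, under which $B(0,r)$ is a metrizable compact space (since $X$ is separable Hilbert). On $B(0,r)$, the restrictions of the cylindrical smooth functions $\mathcal{FC}^1_b(X)$ form a subalgebra of $C_w(B(0,r))$ that separates points (thanks to the coordinate functionals $\langle\cdot,x_j\rangle$) and contains the constants; by Stone--Weierstrass it is uniformly dense in $C_w(B(0,r))$. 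By Lemma \ref{Le:approx}(i)--(ii) combined with Lemma \ref{Le:approx}(iii), I can approximate any $\varphi\in C^1_b(X)$ in the $L^1(X,\nu)$ sense by cylindrical smooth functions that all vanish outside $B(0,r)$, keeping the sup norms controlled; this shows that the bound just obtained on $T_z$ propagates to the restrictions to $B(0,r)$ of functions in $\mathcal{FC}^1_b(X)$. Hence $T_z$ extends to a bounded linear functional on the dense subspace and then to all of $C_w(B(0,r))$, with norm $\leq V_z(u)$. The Riesz representation theorem on the compact metric space $B(0,r)$ produces a real Borel measure $\mu$ (with respect to the weak topology) such that $T_z\varphi=-\int\varphi\,d\mu$. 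Since the weak and norm Borel $\sigma$-algebras of a separable Hilbert space coincide, $\mu$ is a norm Borel measure; extending it by $0$ outside $B(0,r)$ yields the required $m_z\in \mathcal M(X,\R)$.

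\medskip

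\textbf{Step 2 (general $u$).} I choose a $C^1$ partition of unity $\{\chi_k\}_{k\geq 0}$ subordinated to the covering $\{\mathring B(0,k+1)\setminus B(0,k-1):k\in\N\}\cup \mathring B(0,1)$, so that $\chi_k\in C^1_b(X)$, $\sum_k\chi_k\equiv 1$, and each $\chi_k$ has support in an annulus. Set $u_k:=u\chi_k$. Using the pointwise product formula $\partial_z^*(\chi_k\varphi)=\chi_k\partial_z^*\varphi+\varphi\langle R\nabla\chi_k,z\rangle$, I get
$$T_{u_k,z}\varphi=\int_X u\,\partial_z^*(\chi_k\varphi)\,d\nu-\int_X u\varphi\langle R\nabla\chi_k,z\rangle\,d\nu,$$
which yields $|T_{u_k,z}\varphi|\leq (V_z(u)+C_k)\|\varphi\|_\infty$ for $\varphi\in C^1_b(X)$ with $\|\chi_k\varphi\|_\infty\leq 1$; hence $V_z(u_k)<\infty$. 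Step 1 applies and provides measures $m_{z,k}\in\mathcal M(X,\R)$ supported in the annular support of $\chi_k$, with $T_{u_k,z}\varphi=-\int\varphi\,dm_{z,k}$.

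\medskip

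\textbf{Step 3 (patching).} The main remaining point is to show that the series $\sum_k m_{z,k}$ converges in total variation to a finite measure $m_z$ satisfying \eqref{BVz} with $|m_z|(X)=V_z(u)$. Because at each $x$ only finitely many $\chi_k(x)$ are nonzero, the sum $\sum_k \chi_k\varphi$ is a finite sum with at most $\|\varphi\|_\infty$ absolute value, so for any test function $\psi\in C^1_b(X)$ with $\|\psi\|_\infty\le 1$ one can add the identities $T_{u_k,z}\psi=-\int\psi\,dm_{z,k}$ over finite blocks (odd $k$'s and even $k$'s separately, since within each block the supports are disjoint) to bound $\sum_{k\in\mathrm{block}}|m_{z,k}|(X)\leq 2V_z(u)$; this finite-sum-control argument shows $\sum_k|m_{z,k}|(X)<\infty$, so $m_z:=\sum_k m_{z,k}$ is a well-defined element of $\mathcal M(X,\R)$. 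Testing against any $\varphi\in C^1_b(X)$ and using the finite-support property of the partition of unity gives
$$T_z\varphi=\sum_{k} T_{u_k,z}\varphi=-\sum_k\int_X\varphi\,dm_{z,k}=-\int_X\varphi\,dm_z,$$
which is \eqref{BVz}. Finally $|m_z|(X)=V_z(u)$ follows from \eqref{norma}.

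\medskip

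The delicate point I expect is Step 1, specifically justifying that the bound $|T_z\varphi|\leq V_z(u)\|\varphi\|_{L^\infty(B(0,r))}$ really holds for all cylindrical $\varphi$ (not only for those vanishing outside $B(0,r)$, since a generic cylindrical function on $B(0,r)$ need not extend to a function in $C^1_b(X)$ supported in $B(0,r)$). The fix is to first bound $T_z$ on $\{\varphi\in C^1_b(X):\varphi\equiv 0\text{ outside }B(0,r)\}$ by a truncation argument (Lemma \ref{Le:approx}(iii)), observe that this is a sup-norm-dense subset of $C_0(\mathring B(0,r))$ in the weak topology, and only then extend by continuity to $C_w(B(0,r))$ via Stone--Weierstrass. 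The patching in Steps 2--3 is then comparatively routine.
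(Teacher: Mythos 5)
Your proposal follows essentially the same route as the paper's proof: the estimate $|T_z\varphi|\le V_z(u)\|\varphi\|_{L^\infty(B(0,r))}$, the weak topology on $B(0,r)$, Stone--Weierstrass and Riesz representation for the compactly supported case, then a $C^1$ partition of unity subordinate to annuli and a near-optimal test function argument to sum the total variations $|m_{z,k}|(X)$ (your bound there should also carry the commutator terms $C_k$, whose sum is controlled by $2\|u\|_{L^1(X,\nu)}\|\theta'\|_\infty$, but this does not affect finiteness). The ``delicate point'' you flag at the end is not actually an issue: the $L^\infty(B(0,r))$ bound is established for \emph{every} $\varphi\in C^1_b(X)$ (hence for every cylindrical $\varphi$) by multiplying by a cutoff $\eta_\eps$ equal to $1$ on $B(0,r)$ and supported in $B(0,r+\eps)$ and letting $\eps\to 0$, and this alone makes $\widetilde T_z$ well defined on restrictions to $B(0,r)$, so your proposed detour through functions vanishing outside $B(0,r)$ is unnecessary.
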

\begin{proof}
%

One of the implications  is immediate: as remarked above, if $u\in BV_z(X, \nu)$ then  $V_z(u)$ is finite, and equal to $|m_z|(X)$. 

The main part of the theorem is the proof of the converse, namely that if $V_z(u)<+\infty$ then $u\in BV_z(X, \nu)$. This will be done in two steps. 
In the first step we consider the case where $u$ has bounded support, and the general case is treated in the second step. 
 

\vspace{3mm}
\noindent {\em First step: the case $u\equiv 0$ in $X\setminus B(0,r)$. }

We claim that
\begin{equation}
\label{restrizmagg}
|T_z\varphi| \leq V_z(u)\|\varphi\|_{L^{\infty}(B(0,r))}, \quad \varphi\in  C^1_b(X). 
\end{equation}
Indeed, for any 
$\eps >0$ let  $\theta_{\eps} \in C^1(\R)$ be such that  
$$\|\theta_{\eps}\|_{\infty} \leq 1, \quad \theta_{\eps} \equiv 1\; \mbox{\rm in} \;[0, r^2], \quad  \theta_{\eps} \equiv 0\; \mbox{\rm in} \;[(r+\eps)^2, +\infty), $$
and we set 
$$\eta_{\eps} (x) := \theta_{\eps} (\|x\|^2), \quad x\in X. $$
For any  $\varphi\in  C^1_b(X)$, since $u\equiv 0$ in $X\setminus B(0, r)$ and 
$\varphi \equiv  \varphi\eta_{\eps} $ in  $B(0,r)$,  we have 
$$T_z\varphi = \int_X u\, \partial_z^* \varphi \,d\nu   = \int_{B(0,r)} u\, \partial_z^*( \varphi \eta_{\eps} )\,d\nu =  \int_{X} u\, \partial_z^*( \varphi \eta_{\eps} )\,d\nu = T_z(\varphi \eta_{\eps} ). $$
Therefore, 
$$|T_z\varphi| = |T_z(\varphi\eta_{\eps} )| \leq V_z(u) \|\varphi \eta_{\eps} \|_{\infty} \leq  V_z(u) \|\varphi\|_{L^{\infty}(B(0,R+ \eps))}, \quad \eps >0. $$
Since  $\varphi $ is uniformly continuous,  $\lim_{\eps\to 0}  \|\varphi\|_{L^{\infty}(B(0,R+ \eps))} = \|\varphi\|_{L^{\infty}(B(0,r))}$, and \eqref{restrizmagg} follows.

Now we endow    $B(0,r)$ with  the weak topology. Since  $B(0,r) $ is closed, convex and bounded, it is weakly compact. 
We denote by 
 $C_w( B(0,r))$ the space of the weakly continuous functions (namely, continuous with respect to the weak topology) from $B(0,r)$ to $\R$. 
 
 The elements of  $\mathcal{FC}^1_b(X)$ are weakly continuous functions. Their restrictions to 
$B(0,r) $ constitute  an algebra  $\mathcal A$ that separates points and contains the constants. By the Stone-Weierstrass Theorem,  $\mathcal A$ is dense  in $C_w( B(0,r))$. 

Every  $f\in \mathcal A$ is the restriction to  $B(0,r) $ of a function  $\widetilde{f} \in \mathcal{FC}^1_b(X)$. 
We set  
$$\widetilde{T}_zf := T_z\widetilde{f} . $$
$\widetilde{T}_z$ is well defined:  if  $f$ is the restriction to $B(0,r) $ of  both $\widetilde{f}_1$ and   $\widetilde{f}_2$, we have  $T_z\widetilde{f}_1= T_z\widetilde{f}_2$ by estimate  \eqref{restrizmagg}. Still  \eqref{restrizmagg} implies  $|\widetilde{T}_zf | \leq  V_z(u)\|f\|_{L^{\infty}(B(0,r))}$ for every $f\in \mathcal A$. So,  $\widetilde{T}_z $ is bounded  on  $\mathcal A$, and therefore it has a linear bounded extension  (still called  $\widetilde{T}_z$)  to the whole  $C_w( B(0,r))$. By the Riesz Theorem, such extension has an integral representation: there exists a (unique) real valued   measure $\mu$ on the Borel sets of $B(0,r)$ with respect to the weak topology, such that   
$$  |\mu|(B(0,r)) = \|\widetilde{T}_z\| = V_z(u), $$
and 
$$\widetilde{T}_z\varphi = - \int_{B(0,r)} \varphi \,d\mu , \quad \varphi \in C_w( B(0,r)). $$
In particular, 
$$T_z\varphi = - \int_{B(0,r)} \varphi \,d\mu , \quad \varphi \in \mathcal{FC}^1_b(X). $$
We recall that the Borel subsets of  $B(0,r)$ with respect to the weak topology coincide with the Borel subsets of $B(0,r)$ with respect to the strong topology. So,  $\mu$  is a Borel measure on  $B(0,r)$, that we extend in an obvious way to all Borel sets of $X$, setting
$$m_z(B) := \mu(B\cap B(0,r)), \quad B\in \mathcal B(X). $$
Therefore  we have
\begin{equation}
\label{quasi}
T_z\varphi = - \int_{X} \varphi \,dm_z , \quad \varphi \in \mathcal{FC}^1_b(X). 
\end{equation}
To finish Step 1 we have to extend the validity of   formula \eqref{quasi} to all $\varphi \in    C^1_b(X)$. 
For  $\varphi \in    C^1_b(X)$ we consider the cylindrical approximations   $\varphi _n(x) := \varphi(P_nx)$. 
For every $n\in \N$, \eqref{quasi} yields 
\begin{equation}
\label{quasi_n}
T_z\varphi_n = -  \int_{X} \varphi_n \,dm_z . 
\end{equation}
We have $\lim_{n\to \infty} \varphi_n(x) = \varphi(x)$ for every $x\in X$, and $|\varphi_n(x) |\leq \|\varphi\|_{\infty}$. Therefore, the right-hand side of \eqref{quasi_n} goes to 
$  -  \int_{X} \varphi \,dm_z$ as $n\to \infty$. 
Moreover,  by Lemma \ref{Le:approx}(i) we have 
$\lim_{n\to \infty} \partial \varphi_n/\partial (R^*z)(x)$  $=$  $\partial \varphi/ \partial (R^*z)(x)$ for every $x\in X$
and $|\partial \varphi_n/\partial(R^*z)(x)| \leq \|\nabla \varphi\|_{\infty} /\|R^*z\| $. It follows 
$|\partial^*_z \varphi _n(x) |\leq  \|\nabla \varphi\|_{\infty} /\|R^*z\|  + |v_z(x)| \,\|\varphi\|_{\infty}$. By the Dominated Convergence Theorem, the left-hand side of  \eqref{quasi_n} goes to 
$ T_z\varphi$ as $n\to \infty$. So, letting $n\to \infty$ in \eqref{quasi_n} yields 
 $T_z\varphi=  - \int_{X} \varphi \,d\mu_z$, 
and recalling \eqref{norma} the statement is proved. 

\vspace{3mm}

Before going to Step 2, we prove an intuitive property of the measure $m_z$ of Step 1. We show that 
if 
$u\equiv 0$ on  $B(0,r_0)$ for some  $r_0<r$, then $|m_z|(\oo B(0,r_0)  ) =0$, where $\oo B(0,r_0) $ is the open ball centered at $0$ with radius $r$. To this aim we recall that
$$|m_z|(\oo B(0,r_0) )  = \sup \bigg\{ \int_X \varphi\,d m_z , \; \varphi\in C_b(X), \; \|\varphi\|_{\infty} \leq 1, \; \varphi (x) =0\; \mbox{\rm for}\; \|x\| \geq r_0\bigg\}$$
Fixed any  $\eps >0$ let  $\varphi\in  C_b(X)$ be such that  $\|\varphi\|_{\infty} \leq 1$,  $\varphi (x)= 0$ for $ \|x\|\geq r_0$ and 
$$|\mu_z|(\oo B(0,r_0) ) \leq \int_X \varphi \,d\mu_z + \eps. $$
By Lemma \ref{Le:approx}(iii) there exists a double-index sequence of $C^1_b(X)$ functions  $(\widetilde{\varphi}_{k,n})$ such that $|\widetilde{\varphi}_{k,n}(x)\|\leq \|\varphi\|_{\infty}$ for every $x\in X$, $\lim_{k\to \infty} \lim_{n\to \infty} \widetilde{\varphi}_{k,n}(x) = \varphi(x)$ for every $x$, and $\widetilde{\varphi}_{k,n}(x) = 0$ if $\|x\|\geq r_0$. By the Dominated Convergence Theorem  we get 
$$\lim_{k\to \infty} \lim_{n\to \infty} \int_X \widetilde{\varphi}_{k,n}\,dm_z =  \int_X \varphi \,dm_z$$
so that there are $k(\eps)$, $n(\eps)\in \N$ such that 

$$|m_z|(\oo B(0,r_0) ) \leq  \int_X  \widetilde{\varphi}_{k(\eps),n(\eps)} \,dm_z + 2\eps. $$
Since $ \widetilde{\varphi}_{k(\eps),n(\eps)}$ belongs to  $C^1_b(X)$ we have
$$ \int_X \widetilde{\varphi}_{k(\eps),n(\eps)} \,dm_z = T( \widetilde{\varphi}_{k(\eps),n(\eps)}) = \int_X u\, \partial_z^*( \widetilde{\varphi}_{k(\eps),n(\eps)} )d\nu   $$
and the last integral is zero,  because $u$ vanishes $\nu$-a.e in $B(0,r_0)$ and $ \widetilde{\varphi}_{k(\eps),n(\eps)}$ vanishes in $X\setminus B(0,r_0)$, so that   $\partial_z^*( \widetilde{\varphi}_{k(\eps),n(\eps)} )$ vanishes $\nu$-a.e. in 
$X\setminus B(0,r_0)$. 
Therefore  $|m_z|(\oo B(0,r_0) ) \leq   2\eps $ for every  $\eps >0$, which implies  $|m_z|(\oo B(0,r_0) )=0$.  

\vspace{3mm}

\noindent{\em Second step: the general case. }
 
Let $\theta \in C^1(\R)$ be an even function supported in  $(-1,1)$, such that  $\theta \equiv 1$ in a neighborhood of $0$, and such that  $\theta(\xi) = 1-\theta(1-\xi)$ for every $\xi\in (0,1)$. For instance, one can take the even extension of the function that is equal to $1$ in $[0, 1/4]$, to $\cos (2(\xi-1/4)/\pi)$ for $\xi\in [1/4, 3/4]$, to $0$ for $\xi \geq 3/4$. 
 
The set of functions $\{\theta_k:\;k\in \Z\}$ defined by  $\theta_k(\xi) := \theta(\xi -k)$, $k\in \Z$, is a  $C^1$ partition of $1$ in $\R$. Each $\theta_k$ is supported  in   $(k-1, k+1)$, and therefore every  $\xi\in \R$ belongs to the support of at most two of them. Define
$$\eta_k(x) := \theta_k(\|x\|), \quad k\in \N \cup \{0\}. $$
Every $\eta_k$ belongs to  $C^1_b(X)$, since every $ \theta_k$ belongs to  $C^1_b(\R)$ and it is constant in a neighborhood of $0$. Moreover 
$$\sum_{k=0}^{\infty} \eta_k(x) =1, \quad x\in X, $$
where for every $x\in X$ the series converges because it has at most two nonzero addenda; more precisely, 
if  $k\leq \|x\| \leq k+1$ we have $\eta_h(x)=0$ for  $h\leq k-1$ and for $h\geq k+2$. So, we have
$$u(x) = \sum_{k=0}^{\infty} \eta_k (x) u(x) =:   \sum_{k=0}^{\infty} u_k(x), \quad x\in X,  $$
where $u_k := u\eta_k$ vanishes outside $\{x: \; k-1 < x< k+1\}$. In particular, its support is contained in $B(0, k+1)$
and  for every $\varphi\in C^1_b(X)$ we have
$$\begin{array}{l}
\ds \bigg| \int_X u\,\eta _k \partial^*_z \varphi\,d\nu \bigg| = \bigg| \int_X u (\partial^*_z (\varphi\eta_k) -\varphi \partial_{R^*z}\eta_k)\,d\nu \bigg|
\\
\\
\ds
 \leq V(u)\| \varphi\eta_k\|_{\infty} + \bigg| \int_X u \,\varphi \,\partial_{R^*z}\eta_k\,d\nu \bigg| \leq V(u) \| \varphi \|_{\infty} + C_k\| \varphi \|_{\infty}
 \end{array}$$
where 
$$C_k = \int_{\{x: \; k-1< \|x\|<k+1\}} |u\, \partial_{R^*z}\eta_k| d\nu \leq \int_{\{x: \; k-1< \|x\| < k+1\}}|u | d\nu \|\theta'\|_{\infty}.  $$
By Step 1 there exists a Borel measure  $m_k$, supported  in $\{x: \; k-1\leq x\leq k+1\}$, whose total variation does not exceed $V_{z}(u) + C_k$, such that 
\begin{equation}
\label{Tk}
T_k\varphi := - \int_X u\,\eta _k  \partial^*_z \varphi\,d\nu =-  \int_{\{x: \; k-1< \|x\| < k+1\}}\varphi \,dm_k, \quad \varphi \in C^1_b(X). 
\end{equation}
The consequent estimate  $|m_k|(X) \leq V_z(u) + C_k$  is not enough for our aim. To improve it we recall \eqref{norma}, that yields
$$|m_k|(X) = V_z(\eta_ku) =   \sup \bigg\{ \int_X u\,\eta_k\, \partial^*_z \varphi\,d\nu: \; \; \varphi\in C^1_b(X), \; \|\varphi\|_{\infty} \leq 1 \bigg\}. $$
For every  $k\in \N \cap\{0\}$ let   $\varphi_k \in C^1_b(X)$ be such that  $\|\varphi_k\|_{\infty} \leq 1$ and such that 
\begin{equation}
\label{mk}
|m_k| (X)  \leq  \int_X u\,\eta_k\, \partial_z^* \varphi_k\,d\nu + \frac{1}{2^k}. 
\end{equation}
Set 
$$\varphi(x) := \sum_{k=1}^{\infty} \eta_k(x) \varphi_k(x), \quad x\in X. $$
The function $\varphi$ is well defined and it belongs to  $C^1_b(X)$, since every $x$ has a neighborhood where all the summands vanish, 
except for at most two of them. Moreover, 
$$|\varphi(x) | \leq  \sum_{k=1}^{\infty} \eta_k(x) |\varphi_k(x)| \leq  \sum_{k=1}^{\infty} \eta_k(x) = 1, \quad x\in X, $$
so that $\|\varphi\|_{\infty} \leq 1$. Then we have
\begin{equation}
\label{ricordare}
\int_X u\,\partial^*_z\varphi \,d\nu \leq V_z (u). 
\end{equation}
On the other hand, the left hand side is equal to
 $\int_X u\, \partial^*_z (\sum_{k=1}^{\infty} \eta_k  \varphi_k)\,d\nu$, and recalling that $\partial^*_z( \eta_k  \varphi_k) =  \eta_k \partial^*_z  \varphi_k +  \varphi_k\partial_{R^*z}\eta_k$ we get 
$$\int_X u\,\partial^*_z\varphi \,d\nu =  \sum_{k=1}^{\infty}\int_X u\, \eta_k  \,\partial^*_z\varphi_k \, d\nu + \sum_{k=1}^{\infty}\int_Xu \, \varphi_k \,
 \partial_{R^*z} \eta_k \, d\nu, $$
where for every $k$ we have 
$$\bigg| \int_Xu\,  \varphi_k  \,\partial_{R^*z} \eta_k \, d\nu\bigg| \leq  \int_{\{x:\, k-1<\|x\|<k\} }  |u \, \partial_{R^*z}  \eta_k| d\nu
\leq   \int_{\{x:\, k-1<\|x\|<k\} }  |u| d\nu   \|\theta' \|_{\infty}  , $$
so that, recalling \eqref{mk} and summing up, 
$$\begin{array}{lll}
\ds \int_X u\,\partial^*_z\varphi \,d\nu &  \geq & \ds \sum_{k=1}^{\infty}\left( |m_k|(X) - \frac{1}{2^k} \right)- \sum_{k=1}^{\infty} \|u\|_{L^1 (B(0, k+1)\setminus B(0,k-1))}  \|\theta' \|_{\infty}
\\
\\
& \geq & \ds
 \sum_{k=1}^{\infty} |m_k|(X)  - 1 -2\|u\|_{L^1 (X, \nu)}  \|\theta' \|_{\infty} 
 \end{array}$$
that implies, through  \eqref{ricordare}, 
$$ \sum_{k=1}^{\infty} |m_k| (X)\leq V_z(u) +1 +  2\|u\|_{L^1 (X, \nu)}   \|\theta' \|_{\infty} . $$
The signed measure  
$$m_z:=  \sum_{k=1}^{\infty}m_k$$
is therefore well defined, and for every $\varphi \in C^1_b(X)$ we have, using \eqref{Tk}, 
$$\int_X \varphi \, dm_z = \sum_{k=1}^{\infty} \int_X \varphi \, dm_k =  \sum_{k=1}^{\infty} \int_X u\,\eta_k \,\partial_z^* \varphi \, d\nu =  \int_X u \, \partial_z^* \varphi \,d\nu = T_z\varphi .$$
So,  $m_z$ is the measure that we were looking for. We already know that $|m_z|(X) = V_z(u)$, and the proof is complete. 
\end{proof}

Now we move to $BV$ functions. First we prove an easy lemma. 

\begin{Lemma}
\label{BVimpliesBVz}
Let $u\in L^1(X, \nu)$   be such that $uv_z\in L^1(X, \nu)$ for every $z\in X$ and $V(u) <+\infty$. Then $u\in BV_z(X, \nu)$ and  $V_z(u)\leq   V(u)\|z\|$ for every $z\in X\setminus  \{0\}$.  
\end{Lemma}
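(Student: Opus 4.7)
The plan is to exhibit, for each scalar test function $\varphi\in C^1_b(X)$ with $\|\varphi\|_\infty\leq 1$, a single-term vector field $F$ admissible in the definition of $V(u)$ whose divergence $M^*F$ is, up to the scalar $-1/\|z\|$, exactly the operator $\partial_z^*$ appearing in the definition of $V_z(u)$. This reduces the bound $V_z(u)\leq V(u)\|z\|$ to a one-line application of formula \eqref{div}. Membership of $u$ in $BV_z(X,\nu)$ will then follow from Theorem \ref{carBVz}.

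First I would fix $z\in X\setminus\{0\}$ and $\varphi\in C^1_b(X)$ with $\|\varphi\|_\infty\leq 1$, and set $F(x):=\varphi(x)\,z/\|z\|$. By \eqref{C1btilde} this vector field lies in $\widetilde{C}^1_b(X,X)$ (take $n=1$, $f_1=\varphi$, $z_1=z/\|z\|$), and it satisfies $\|F(x)\|=|\varphi(x)|\leq 1$ for every $x$, so it is admissible in the supremum defining $V(u)$. Formula \eqref{div}, applied with $n=1$, gives
$$M^*F=-\langle R\nabla\varphi,z/\|z\|\rangle+v_{z/\|z\|}\,\varphi=-\frac{1}{\|z\|}\,\partial_z^*\varphi.$$
Consequently $\int_X u\,M^*F\,d\nu=-\|z\|^{-1}\int_X u\,\partial_z^*\varphi\,d\nu$, and applying the inequality $\int_X u\,M^*G\,d\nu\leq V(u)$ to both $G=F$ and $G=-F$ yields
$$\left|\int_X u\,\partial_z^*\varphi\,d\nu\right|\leq V(u)\,\|z\|.$$
Taking the supremum over all admissible $\varphi$ delivers $V_z(u)\leq V(u)\|z\|$, which is the quantitative assertion of the lemma.

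For the $BV_z$ conclusion I would split into two cases. If $R^*z\neq 0$, then $V_z(u)<+\infty$ by what has just been proved, and Theorem \ref{carBVz} applies directly to give $u\in BV_z(X,\nu)$. If $R^*z=0$, then $\langle R\nabla\varphi,z\rangle=0$ for every $\varphi\in C^1_b(X)$, and as already noted just after Definition \ref{def:BVz} one may take $m_z=-uv_z\,\nu$, which is a well defined real Borel measure since $uv_z\in L^1(X,\nu)$ by hypothesis. There is no genuine obstacle here: the integrals in the argument are all well defined ($u\in L^1(X,\nu)$, $\langle R\nabla\varphi,z\rangle$ is bounded, and $uv_z\in L^1(X,\nu)$), so the proof is essentially a packaging of the definitions together with formula \eqref{div} and the preceding characterization theorem.
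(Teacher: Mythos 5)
Your argument is correct and is essentially the paper's own proof: the same vector field $F=\varphi\,z/\|z\|$, the identity $M^*F=-\|z\|^{-1}\partial_z^*\varphi$ from \eqref{div}, and Theorem \ref{carBVz} to pass from $V_z(u)<+\infty$ to $u\in BV_z(X,\nu)$ (you are in fact more careful than the paper about signs and about the degenerate case). The only slip is in that degenerate case $R^*z=0$, where \eqref{BVz} forces $m_z=uv_z\,\nu$ rather than $-uv_z\,\nu$; otherwise there is nothing to change.
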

\begin{proof}  For every $\varphi\in C^1_b(X)$ with $\|\varphi\|_{\infty}\leq 1$ and $z\in X\setminus \{0\}$  consider the vector field $F(x) := \varphi(x) z/\|z\|$. By the definition of $V(u)$ we get
$$\int_X u( \langle R\nabla \varphi, z\rangle - v_z \varphi) d\nu = \int_X u \, M^*F\,d\nu \|z\| \leq V(u)\|z\| $$
and the statement follows. \end{proof}

\begin{Theorem}
\label{Th:BV}
Let $u\in L^p(X, \nu)$ for some $p>1$. Then, $u\in BV(X, \nu)$ if and only if  $V(u) <+\infty$. 
In this case, the measure $m$ of Definition \ref{def:BV} satisfies $|m|(X) = V(u)$. 
\end{Theorem}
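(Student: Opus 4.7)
If $u\in BV(X, \nu)$ with associated vector measure $m$, then for any $F = \sum_{i=1}^N f_i z_i \in \widetilde{C}^1_b(X, X)$ with $\|F\|_\infty \leq 1$, formula \eqref{div} combined with the defining relation \eqref{BV} applied term by term gives
\[
\int_X u\, M^*F\, d\nu = \sum_i \int_X u\bigl(-\langle R\nabla f_i, z_i\rangle + v_{z_i} f_i\bigr)\, d\nu = \sum_i \int_X f_i\, dm_{z_i}.
\]
Writing $m = \sigma\, |m|$ via the polar decomposition \eqref{polar} and using $m_{z_i}(B) = \langle m(B), z_i\rangle$, the right-hand side equals $\int_X \langle F, \sigma\rangle\, d|m|$, bounded by $|m|(X)$ since $\|F\|_\infty \leq 1$ and $\|\sigma\| = 1$ $|m|$-a.e. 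Taking the sup over $F$ yields $V(u) \leq |m|(X) < \infty$.

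\textbf{Reverse direction: the scalar measures.} Suppose $V(u) < \infty$. By Lemma \ref{BVimpliesBVz} one has $V_z(u) \leq V(u)\|z\|$ for every $z\in X$, and Theorem \ref{carBVz} then produces, for each $z$, a real Borel measure $m_z \in \mathcal M(X,\R)$ satisfying \eqref{BVz} with $|m_z|(X) = V_z(u)$. The map $z\mapsto m_z$ is linear: uniqueness of $m_z$ comes from the density of $C^1_b(X)$ in $C_b(X)$ supplied by Lemma \ref{Le:approx}, while linearity of the left-hand side of \eqref{BVz} in $z$ uses $v_{\alpha z + \beta w} = \alpha v_z + \beta v_w$, which follows from the continuity estimate \eqref{vz}.

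\textbf{Assembling the vector measure.} Fix an orthonormal basis $\{e_k\}$ of $X$, set $\mu_k := m_{e_k}$, and $M_n(B) := \sum_{k=1}^n \mu_k(B) e_k$. The key estimate is $|M_n|(X) \leq V(u)$ for every $n$. Given any finite Borel partition $\{B_j\}_{j=1}^N$ of $X$, choose unit vectors $v_j \in \mathrm{span}(e_1,\ldots,e_n)$ so that $\|M_n(B_j)\| = \langle M_n(B_j), v_j\rangle = m_{v_j}(B_j)$; then $\sum_j \|M_n(B_j)\| = \sum_j m_{v_j}(B_j)$. The simple vector field $G := \sum_j \chi_{B_j} v_j$ satisfies $\|G(x)\|\leq 1$ pointwise. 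I approximate $G$ by $F^{(\varepsilon)} := \sum_j \phi_j^{(\varepsilon)} v_j \in \widetilde{C}^1_b(X, X)$ with $\phi_j^{(\varepsilon)} \in C^1_b(X)$ non-negative, $\sum_j \phi_j^{(\varepsilon)}\leq 1+\delta$, and $\phi_j^{(\varepsilon)}\to \chi_{B_j}$ in $L^1(|\mu_k|)$ for every $k\leq n$. Tightness of the finite Borel measures $|\mu_k|$ on the Polish space $X$ (Ulam's theorem) lets one approximate each $B_j$ from inside by a compact set with arbitrarily small error; smooth Urysohn-type cut-offs around these compacts followed by cylindrical mollification produce the desired $\phi_j^{(\varepsilon)}$. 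Then \eqref{V(u)}, \eqref{div} and the defining property of $m_{v_j}$ yield
\[
(1+\delta)\, V(u) \,\geq\, \int_X u\, M^* F^{(\varepsilon)}\, d\nu \,=\, \sum_j \int_X \phi_j^{(\varepsilon)}\, dm_{v_j} \,\xrightarrow[\varepsilon\to 0]{}\, \sum_j m_{v_j}(B_j) \,=\, \sum_j \|M_n(B_j)\|,
\]
and letting $\delta\to 0$ proves the claim. Lemma \ref{supmisure vett} then produces a vector measure $m \in \mathcal M(X, X)$ with $|m|(X)\leq V(u)$ and $\langle m(B), e_k\rangle = \mu_k(B)$; by linearity $\langle m(B), z\rangle = m_z(B)$ for every $z\in X$, so \eqref{BVz} becomes \eqref{BV} and $u\in BV(X,\nu)$. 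Combined with the forward inequality, $|m|(X) = V(u)$.

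\textbf{Main obstacle.} The only non-formal step is the approximation of the simple vector field $G$ by partition-of-unity type smooth fields $F^{(\varepsilon)}$ that are controlled simultaneously with respect to all the scalar measures $|\mu_k|$, $k\leq n$, preserve the pointwise norm bound up to a factor $1+\delta$, and land in $C^1_b$. Tightness of Borel measures on a separable Hilbert space together with smooth cylindrical regularization is what makes this possible; every other step is a formal manipulation of the already established $BV_z$ theory.
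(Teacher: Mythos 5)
Your overall architecture coincides with the paper's: the forward direction is identical (polar decomposition $m=\sigma|m|$ and the bound $\int_X\langle F,\sigma\rangle\,d|m|\le|m|(X)$), and the reverse direction follows the same skeleton (scalar measures $m_{e_k}$ from Theorem \ref{carBVz}, the truncations $M_n$, a uniform bound $|M_n|(X)\le V(u)$, then Lemma \ref{supmisure vett}). Where you differ is in how you obtain $|M_n|(X)\le V(u)$: the paper invokes the duality formula \eqref{vartot} and observes that the relevant test fields $F\in C^1_b(X,P_n(X))$ automatically lie in $\widetilde{C}^1_b(X,X)$, whereas you re-derive the inequality directly from the partition definition of total variation by smoothing simple vector fields. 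Your route is workable (disjoint compacts from tightness have positive mutual distances, so Lipschitz cut-offs with disjoint supports followed by the cylindrical mollification of Lemma \ref{Le:approx}(ii) give nonnegative $\phi_j^{(\varepsilon)}$ with $\sum_j\phi_j^{(\varepsilon)}\le 1$ and the required $L^1(|\mu_k|)$ convergence), but it amounts to proving a special case of \eqref{vartot} by hand; note also that a genuinely $C^1$ Urysohn function for an arbitrary compact set in infinite dimensions is not available directly, so the mollification detour is not optional.

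The one genuine gap is the sentence ``by linearity $\langle m(B), z\rangle = m_z(B)$ for every $z\in X$.'' Linearity of $z\mapsto m_z$ gives this identity only for $z$ in the \emph{finite} linear span of the $e_k$; for general $z$ one must pass to the limit along $P_nz$, and this is precisely the step the paper isolates as the only place where the hypothesis $u\in L^p$, $p>1$, enters (it is used to show $\int_X u\,v_{P_nz}\varphi\,d\nu\to\int_X u\,v_z\varphi\,d\nu$ via $v_{P_nz}\to v_z$ in $L^{p'}$; the remark following the theorem stresses that $u,uv_z\in L^1$ alone would not suffice for that particular convergence). The gap is fixable with material you already have: since $z\mapsto m_z$ is linear and $|m_{z-z'}|(X)=V_{z-z'}(u)\le V(u)\|z-z'\|$ by Lemma \ref{BVimpliesBVz} and Theorem \ref{carBVz}, the map $z\mapsto m_z$ is Lipschitz into ${\mathcal M}(X,\R)$ with the total variation norm, so $m_{P_nz}(B)\to m_z(B)$ for every Borel set $B$ and hence $\langle m(B),z\rangle=\lim_{n\to\infty} m_{P_nz}(B)=m_z(B)$. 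One way or another this limiting argument must be written out; as it stands your proof does not establish \eqref{BV} for $z$ outside the span of the chosen basis. A minor further omission: Theorem \ref{carBVz} is stated for $R^*z\neq 0$, so for basis vectors with $R^*e_k=0$ you should set $m_{e_k}=uv_{e_k}\nu$, as the paper does after Definition \ref{def:BVz}.
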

\begin{proof}
The proof  that   $u\in BV(X, \nu) \Rightarrow V(u) <+\infty$  is easy. Indeed, let $m = \sigma |m| \in {\mathcal M}(X, X)$ be such that \eqref{BV1}
 holds for each $z\in X$. For every $F\in 
\widetilde{C}^1_b(X, X)$ such that $\|F\|_{\infty} \leq 1$, $F(x) =  \sum_{i=1}^n f_i(x)z_i$, we have 
$$   \int_X u M^* F \, d\nu 
= \int_X u \sum_{i=1}^n (-\langle R\nabla f_i, z_i\rangle + v_{z_i} f_i)d\nu  
= \int_X \langle \sigma(x), \sum_{i=1}^n f_i(x)z_i\rangle |m|(dx) =  \int_X \langle \sigma, F\rangle |m|(dx) .$$
In the last integral we have
$$|\langle \sigma (x), F(x)\rangle |  \leq \| \sigma(x)\|_X \|F(x)\|  \leq 1, \quad |m|-a.e.\;x\in X, $$
so that its modulus does not exceed  $  |m|(X) $. Taking the supremum over all $F\in 
\widetilde{C}^1_b(X, X)$ we get  $V(u)\leq |m|(X) <+\infty$.

To prove the converse we fix $u$ such that $V(u)<+\infty$, and consequently $V_z(u)<+\infty$ for every $z\in X$.  

We fix  any orthonormal basis $\{e_k: \; k\in \N\}$ of $X$. We consider  the real measures $m_{e_k}$ constructed in Theorem \ref{carBVz}, if $R^*e_k\neq 0$. 
If $R^*e_k =  0$ we set $m_{e_k}(dx) := u(x)v_{e_k}(x)\nu(dx)$ (see the comments after Definition \ref{def:BVz}). Our aim is to prove that 
\begin{equation}
\label{mvett}
m(B) : = \sum_{k=1}^{\infty} m_{e_k}(B) e_k, \quad B\in \mathcal B(X), 
\end{equation}
is a well defined vector measure belonging to $\mathcal M(X, X)$, that satisfies \eqref{BV} and such that $|m|(X) \leq V(u)$. 

To prove that $m$ is well defined we consider a sequence of vector measures with finite dimensional range, 
$$ M_n(B) =  \sum_{k=1}^{n} m_{e_k}(B) e_k, \quad B\in \mathcal B(X). $$
By \eqref{vartot}, for every $n\in \N$ we have
$$\begin{array}{lll}
|M_n|(X) & = & \ds  \sup   \left\{ \int_X \langle F, dM_n\rangle :\; F\in C^1_b(X, X), \; \|F\|_{\infty} \leq 1 \right\} 
\\
\\
&=& \ds  \sup   \left\{ \int_X \langle F, dM_n\rangle :\; F\in C^1_b(X, P_n(X)), \; \|F\|_{\infty} \leq 1 \right\} , \end{array}$$
where $P_n$ is the orthogonal projection defined in \eqref{Pn}. Each vector field $F\in C^1_b(X, P_n(X))$ may be written as $F(x) = \sum_{k=1}^n f_k(x) e_k$, with $f_k\in C_b(X)$. Therefore it belongs to $\widetilde{C}^1_b(X, X)$, and by Theorem \ref{carBVz} we have 
$$\int_X \langle F, dM_n\rangle = \int_X \sum_{k=1}^n f_{k} \, d m_{e_k} = -  \int_X \sum_{k=1}^n u\,  \partial^*_{e_k} f_k\,d\nu =  \int_X  u \, M^*F\,d\nu
 \leq V(u)\|F\|_{\infty},  $$
so that $|M_n|(X) \leq V(u)$ for every $n\in \N$. By lemma \ref{supmisure vett}, the series $\sum_{k=1}^{n} m_{e_k}(B) e_k$ converges for every Borel set $B$, and  the formula  \eqref{mvett} defines a vector measure $m\in \mathcal M(X,X)$ such that $|m|(X) \leq V(u)$. By definition, we have $\langle m(B), e_k\rangle = m_{e_k}(B)$ for every $k\in \N$ and therefore   \eqref{BV} holds if $z=e_k$ for some $k\in \N$. Consequently, since the function $z\mapsto m_z$ is linear,  \eqref{BV} holds if $z$ is a linear combination of the $e_k$. If  $z$  is not  a linear combination of the $e_k$, we approach it by  $z_n:= P_nz$. For each  $n$ and $ \varphi \in C^1_b(X)$ we have  
\begin{equation}
\label{approxM_n}
\int_X u( \langle R \nabla \varphi, z_n\rangle -  v_{z_n} \varphi)d\nu = - \int_X \varphi \,d m_{P_nz} =  - \int_X \varphi \sum_{k=1}^{n} \langle z, e_k\rangle  dm_{e_k}
 =  - \int_X \varphi   \langle z, M_n(dx) \rangle   . 
 \end{equation}
Letting $n\to \infty$, $\langle R \nabla \varphi, z_n\rangle \to \langle R \nabla \varphi, z\rangle$ and its modulus does not exceed  $\|R\nabla \varphi\|_{\infty} \|z\|$. Moreover, $\lim_{n\to\infty} v_{z_n} = v_z$ in $L^{p'}(X, \nu)$, 
by \eqref{vz}. So, the left hand side of \eqref{approxM_n}
converges to $\int_X u( \langle R \nabla \varphi, z \rangle -  v_{z } \varphi)d\nu$ as $n\to \infty$. By  Lemma  \ref{supmisure vett} the sequence of measures $(  \langle z, M_n(dx) \rangle  )$ converge weakly to  $ \langle z, m(dx) \rangle  $, so that the right hand side converges to $ - \int_X \varphi   \langle z, m(dx) \rangle  $ as $n\to \infty$. Therefore, 
\eqref{BV} holds for every $z\in X$. 
\end{proof}

\begin{Remark}
The assumption $u\in L^p(X, \nu)$ with $p>1$ is used only in the very last step of the proof, to prove that   $\int_X u  \, v_{P_nz} \varphi\, d\nu\to 
\int_X u  \, v_{z} \varphi\, d\nu$ as $n\to\infty$. To this aim the assumption that $u$ and $uv_z$ belong to $L^1(X, \nu)$ for every $z$ is not enough. If $\nu$ is a centered nondegenerate Gaussian measure, it is enough that $u$ belongs to a suitable Orlicz space $Y:=L(\log L)^{1/2}(X, \nu)$, since the space $X_{\nu}^*$ of all the functions $v_z$ is embedded in the dual space $Y'$. See \cite{FH}. 
\end{Remark}

As we may  expect, we can take other test functions without affecting the definitions of $V_z(u)$ and of $V(u)$. Indeed, the following lemma holds.

\begin{Lemma}
\label{Le:caratterizzazioni}
Let $u\in L^1(X, \nu)$. Then
\begin{itemize}
\item[(i)] For every $z\in X$ such that $uv_z\in L^1(X, \nu)$   we have
\begin{equation}
\label{equiv1}
V_z(u) =   \sup\bigg\{ \int_X u\,\partial _z^* \varphi\, d\nu :\; \varphi\in \mathcal{FC}^1_b(X), \; \|\varphi\|_{\infty} \leq 1\bigg\}
\end{equation}
If in addition $u\in L^p(X, \nu)$ for some $p>1$, 
\begin{equation}
\label{equiv1+}
V_z(u) = \sup\bigg\{ \int_X u\,\partial _z^* \varphi\, d\nu :\; \varphi\in W^{1, p'}(X, \nu)\cap L^{\infty}(X, \nu), \; \|\varphi\|_{\infty} \leq 1\bigg\}
\end{equation}
\item[(ii)] If $uv_z\in L^1(X, \nu)$  for every $z\in X$ we have
\begin{equation}
\label{equiv2}
V(u) =  \sup\bigg\{ \int_X u\, M^*F \, d\nu :\; F\in \widetilde{\mathcal{FC}}^1_b(X, X), \; \|F\|_{L^{\infty}(X, \nu;X)} \leq 1\bigg\}
\end{equation}
If in addition $u\in L^p(X, \nu)$ for some $p>1$, 
\begin{equation}
\label{equiv2+}
V(u) = \sup\bigg\{ \int_X u\, M^*F \, d\nu :\; F\in \widetilde{W}^{1,p'}(X, X)\cap L^{\infty}(X, \nu;X), \; \|F\|_{L^{\infty}(X, \nu;X)} \leq 1\bigg\}, 
\end{equation}
and  for any orthonormal basis $\{e_k:\; k\in \N\}$ of $X$ we have
\begin{equation}
\label{equiv3}
V(u) = \sup\bigg\{ \int_X u\, M^*F \, d\nu :\; F(x) = \sum_{i=1}^n \varphi_i(x) e_i, \; n\in \N, \; \varphi_i\in C^1_b(X),        \; \|F\|_{\infty}   \leq 1\bigg\}. 
\end{equation}
\end{itemize}
\end{Lemma}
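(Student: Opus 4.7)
The plan is to prove each of the five identities by establishing two inequalities; the direction in which the test class shrinks is trivial from the inclusions $\mathcal{FC}^1_b(X)\subset C^1_b(X)\subset W^{1,p'}(X,\nu)\cap L^\infty(X,\nu)$ and their vector-valued analogues, so the substance is to show that the supremum does not decrease when the test class is enlarged. The common mechanism is approximation of a test function or field, followed by passage to the limit in the defining integral, exploiting $u\in L^1(X,\nu)$, $uv_z\in L^1(X,\nu)$, and (where applicable) $u\in L^p(X,\nu)$ with H\"older's inequality.

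For \eqref{equiv1} and \eqref{equiv2} I would use the cylindrical truncations $\varphi_n:=\varphi\circ P_n$ and $F_n(x):=F(P_nx)$, which belong to $\mathcal{FC}^1_b(X)$ and $\widetilde{\mathcal{FC}}^1_b(X,X)$, respectively. Lemma \ref{Le:approx}(i) supplies pointwise convergence of functions and gradients together with the bounds $\|\varphi_n\|_\infty\leq\|\varphi\|_\infty$ and $\|F_n(x)\|=\|F(P_nx)\|\leq\|F\|_\infty$, so dominated convergence gives $\int_X u\,\partial_z^*\varphi_n\,d\nu\to\int_X u\,\partial_z^*\varphi\,d\nu$ and its vector analogue. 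For \eqref{equiv3} the correct approximant is the codomain projection $F_N(x):=P_NF(x)$, which already lies in the prescribed class and satisfies $\|F_N(x)\|\leq\|F(x)\|\leq 1$; writing $F=\sum_i f_iz_i$ one has $F_N=\sum_i f_i(P_Nz_i)$, and the convergence $\int_X u\,M^*F_N\,d\nu\to\int_X u\,M^*F\,d\nu$ follows from $P_Nz_i\to z_i$ in $X$ together with the continuity $z\mapsto v_z\in L^{p'}$ supplied by \eqref{vz} and H\"older's inequality using $u\in L^p$.

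For \eqref{equiv1+}, given $\varphi\in W^{1,p'}(X,\nu)\cap L^\infty(X,\nu)$ with $\|\varphi\|_\infty\leq 1$, Lemma \ref{Le:Sobolev}(i) furnishes $\varphi_n\in C^1_b(X)$ with $\|\varphi_n\|_\infty\leq 1+\delta$ and $\varphi_n\to\varphi$ in $W^{1,p'}$. The first term of $\partial_z^*\varphi_n$ converges in the integral by H\"older (using $u\in L^p$ and $R\nabla\varphi_n\to M_{p'}\varphi$ in $L^{p'}$), the second by dominated convergence (using $uv_z\in L^1$ and the uniform bound on $\varphi_n$). Dividing by $1+\delta$ to renormalize and letting $\delta\downarrow 0$ yields the identity.

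The main obstacle is \eqref{equiv2+}, because naive coefficientwise approximation of $F=\sum_if_iz_i$ does not preserve the joint sup-norm constraint $\|F\|_\infty\leq 1$. I would handle this by first rewriting $F$ in an orthonormal basis $\{\tilde e_k\}_{k=1}^m$ of its (finite-dimensional) range, obtaining $F=\sum_{k=1}^m g_k\tilde e_k$ with $g_k\in W^{1,p'}\cap L^\infty$, $|g_k|\leq 1$, and $\sum_k g_k^2\leq 1$; then approximating each $g_k$ via Lemma \ref{Le:Sobolev}(i) by $g_{k,n}\in C^1_b(X)$, $\|g_{k,n}\|_\infty\leq 1+\delta$, $g_{k,n}\to g_k$ in $W^{1,p'}$; and finally composing with a smooth radial truncation $\Phi:\R^m\to\R^m$ equal to the identity on $\{|w|\leq 1+\delta/2\}$ and bounded by $1+\delta$ throughout, so that $\widetilde F_n:=\sum_k\Phi^k(g_{1,n},\ldots,g_{m,n})\tilde e_k\in\widetilde{C}^1_b(X,X)$ with $\|\widetilde F_n\|_\infty\leq 1+\delta$. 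Since $|G(x)|=\bigl(\sum_k g_k(x)^2\bigr)^{1/2}\leq 1$ a.e., the a.e.\ convergence of $G_n:=(g_{1,n},\ldots,g_{m,n})$ (along a subsequence) places $G_n(x)$ eventually inside $\{|w|\leq 1+\delta/2\}$ at a.e.\ $x$, so $\Phi$ acts there as the identity and $\nabla\Phi^k(G_n)$ reduces to $\nabla g_{k,n}$; combined with $L^{p'}$-boundedness of $R\nabla g_{k,n}$ and reflexivity of $L^{p'}$, this lets one pass to the limit $\int_X u\,M^*\widetilde F_n\,d\nu\to\int_X u\,M^*F\,d\nu$, yielding $\int_X u\,M^*F\,d\nu\leq(1+\delta)V(u)$ and, after $\delta\downarrow 0$, the desired identity.
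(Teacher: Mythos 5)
Your proof is correct and, for \eqref{equiv1}, \eqref{equiv1+}, \eqref{equiv2} and \eqref{equiv3}, follows essentially the same route as the paper: cylindrical truncations $\varphi\circ P_n$ with Lemma \ref{Le:approx}(i) and dominated convergence for the first pair, Lemma \ref{Le:Sobolev}(i) with the $(1+\delta)$ renormalization for \eqref{equiv1+}, and the projection $F\mapsto P_NF=\sum_i f_i\,P_Nz_i$ for \eqref{equiv3} (your observation that $\|P_NF(x)\|\le\|F(x)\|\le 1$ even spares you the normalization $F_n/\|F_n\|_{L^\infty}$ that the paper performs). The one genuine difference is \eqref{equiv2+}: the paper simply declares its proof ``the same as Statement (i)'' and leaves it to the reader, whereas you correctly point out that a componentwise application of Lemma \ref{Le:Sobolev}(i) to $F=\sum_i f_i z_i$ only controls each $\|f_{i,n}\|_\infty$ and not the joint constraint $\|F(x)\|\le 1$, and you repair this by rewriting $F$ in an orthonormal basis of its finite-dimensional range and composing the approximants with a smooth radial truncation of $\R^m$ bounded by $1+\delta$ and equal to the identity on $\{|w|\le 1+\delta/2\}$. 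This is a real subtlety the paper glosses over, and your construction handles it: the truncation keeps the field admissible up to the factor $1+\delta$, acts as the identity in the limit because $\sum_k g_k(x)^2\le 1$ a.e., and the chain rule together with the uniform bound on $\nabla\Phi$ and the strong $L^{p'}$ convergence of $R\nabla g_{k,n}$ lets you pass to the limit (in fact strong convergence suffices here, so the appeal to reflexivity of $L^{p'}$ is not needed). The only price is a slightly longer argument for a statement the paper treats as routine.
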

\begin{proof}
Let us prove Statement (i). Since $\mathcal{FC}^1_b(X)\subset C^1_b(X)$, the supremum in the right hand side of \eqref{equiv1}(a) is less or equal to $V_z(u)$. To prove the equality in \eqref{equiv1} we use Lemma \ref{Le:approx}: we approach  any $\varphi\in C^1_b(X)$ such that $ \|\varphi\|_{\infty} \leq 1$ by the sequence  of cylindrical functions $(\varphi\circ P_n)$. Lemma \ref{Le:approx}(i) and the Dominated Convergence Theorem yield $\int_X u\, \partial _z^* \varphi\, d\nu = \lim_{n\to\infty} \int_X u\, \partial _z^* \varphi_n\, d\nu $, and  \eqref{equiv1}   follows. 
 
The proof of \eqref{equiv1+}  is a bit more complicated. Of course,  $V_z(u)$ is less or equal to the supremum in the right hand side of \eqref{equiv1}. To prove the equality,
for every $\eps>0$ and $\varphi\in W^{1, p'}(X, \nu)$ such that $\|\varphi\|_{L^{\infty}(X, \nu)} \leq 1$ we shall exhibit a function $\psi\in C^1_b(X)$ such that $\|\psi\|_{L^{\infty}(X, \nu)} \leq 1$ and 
\begin{equation}
\label{approxVz}
\bigg| \int_X u\, \partial _z^*( \varphi-\psi) \, d\nu\bigg| \leq  \eps. 
\end{equation}
We fix $\delta >0$ such that  
\begin{equation}
\label{delta}
\bigg| \frac{1}{1+\delta} \int_X u\, \partial _z^*  \varphi  \, d\nu -  \int_X u\, \partial _z^*  \varphi  \, d\nu\bigg| \leq \frac{\eps}{2}, 
\end{equation}
and we approach $\varphi$ pointwise $\nu$-a.e.  and in $W^{1,p'}(X, \nu)$ by a sequence of $C^1_b(X) $ functions $(\varphi_n)$ such that $\|\varphi_n\|_{\infty} \leq 1+\delta$, 
given by Lemma \ref{Le:Sobolev}(i). So, 
 $\lim_{n\to \infty} \langle R\nabla  \varphi_n, z\rangle  = \langle R\nabla\varphi, z\rangle $ in $L^{p'}(X, \nu)$. Moreover, since $ \varphi_n \to \varphi $ in $L^{q}(X, \nu)$ for every $q\in [1, \infty)$ and $v_z\in L^r(X, \nu)$ for every $r\in [1, +\infty)$, we have
 $\lim_{n\to \infty}   \varphi_n v_z  = \varphi v_z$ in $L^{p'}(X, \nu)$. Summing up,  $\lim_{n\to \infty} \partial^*_z  \varphi_n = \partial^*_z \varphi  $ in $L^{p'}(X, \nu)$
and therefore  $\lim_{n\to \infty} \partial^*_z  \varphi_n/(1+\delta) = \partial^*_z \varphi /(1+\delta)$ in $L^{p'}(X, \nu)$. So, for $n$ large enough we have 
$$\bigg| \frac{1}{1+\delta} \int_X u\partial _z^*  \varphi  \, d\nu - \frac{1}{1+\delta} \int_X u\partial _z^*  \varphi_n\, d\nu\bigg| \leq \frac{\eps}{2}$$
which yields \eqref{approxVz} with $\psi =  \varphi_n$, taking \eqref{delta} into account. 
\vspace{3mm}
 
 Concerning Statement (ii), the proofs of \eqref{equiv2} and of \eqref{equiv2+}  are the same of Statement (i), and they are left to the reader. 

Since the supremum in the right hand side of \eqref{equiv3} is less or equal to $V(u)$, 
to prove the equality in \eqref{equiv3} it is sufficient to approach every $F\in \widetilde{C}^1_b(X, X)$ with  $\|F\|_{L^{\infty}(X, \nu;X)}\leq 1$ by a sequence of vector fields $\Phi_n $ with range in the linear span of the basis, such that $\|\Phi_n\|_{L^{\infty}(X, \nu;X)}\leq 1$ and such that 
 \begin{equation}
 \label{passlimite}
 \lim_{n\to \infty} \int_X u\,M^*\Phi_n\, d\nu =  \int_X u\,M^*F d\nu . 
\end{equation}
 This is easily done, approaching  $F(x)= \sum_{i=1}^k f_i (x) z_i$ by 
 $F_n(x) := \sum_{i=1}^k f_i (x) P_nz_i =  \sum_{j=1}^n \varphi_j(x) e_j$, with   $ \varphi_j(x)= \sum_{i=1}^k f_i (x) \langle z_i , e_j\rangle $. 
 Then $F_n\to F$ in $L^{\infty}(X, \nu;X)$ since
 $$\|F_n-F\|_{\infty} = \sup_{x\in X} \| \sum_{i=1}^k f_i(x)(P_nz_i -z_i)\| \leq \bigg(\sum_{i=1}^k \|f_i\|_{\infty}^2\bigg)^{1/2} \bigg( \sum_{i=1}^k\|P_nz_i -z_i\|^2\bigg)^{1/2}. $$
 Moreover, 
 $M^*F_n =  \sum_{i=1}^k (\langle R\nabla f_i, P_nz_i\rangle - v_{P_nz_i}f_i)$ converges to 
$M^* F=  \sum_{i=1}^k (\langle R\nabla f_i,  z_i\rangle - v_{ z_i}f_i)$ in $L^{p'}(X, \nu;X)$  by the Dominated Convergence Theorem and estimate \eqref{vz}. 
 Therefore, setting  $\Phi_n:=  F_n/\|F_n\|_{L^{\infty}(X, \nu;X)}$ for sufficiently large $n$ (such that $ F_n \not\equiv 0$),  the vector fields $\Phi_n$ have values in the unit ball of $X$, have range in the linear span of the basis, and \eqref{passlimite} holds. 
  \end{proof}

 \subsection{Other classes of Sobolev and $BV$ functions}
 \label{subs:other}
 
Throughout this section we assume that 

\begin{Hypothesis}
\label{Hyp:R}
$R=R^*$ is one to one,  and   there exists an orthonormal basis $\{h_k:\; k\in \N\}$ of $X$ consisting of eigenvectors of $R$. 
\end{Hypothesis}

As mentioned in Remark \ref{Sobolevgenerali}, the gradient operator $\nabla:C^1_b(X, \nu)\mapsto L^p(X, \nu;X)$ is closable in $L^p(X, \nu)$ for every $p\geq 1$. We call $W^{1,p}_0(X, \nu)$ the domain of the closure, which is  denoted by $\nabla_p$. $W^{1,p}_0(X, \nu)$ is endowed with the graph norm
  $$\|f\|_{W^{1,p}_0(X, \nu)} := \|f\|_{L^p(X, \nu)} + \|\nabla_p f\|_{L^p(X, \nu;X)} . $$
%
Moreover, for every $f\in W^{1,p}_0(X, \nu)$ and $z\in X$ we set 
$$\frac{\partial f}{\partial z}:= \langle \nabla_pf, z\rangle. $$

Since $R$ is a bounded operator, it follows immediately from the definition that  
\begin{equation}
\label{relazione}
W^{1,p}_0(X, \nu) \subset W^{1,p}(X, \nu), \quad M_pf =R\nabla_pf , \;f\in W^{1,p}_0(X, \nu). 
\end{equation}
We shall use the following lemma, 

\begin{Lemma}
\label{Sobregolari}
Let $\varphi\in C^1(X)$ be such that $\|\nabla \varphi\|$ is bounded in $\varphi^{-1}(-r, r)$ for every $r>0$, and $\int_X (|\varphi|^p + \|\nabla \varphi\|^p) d\nu <+\infty$, for some $p\geq 1$. Then $\varphi\in W^{1,p}_0(X, \nu)$, and $\nabla_p\varphi = \nabla \varphi$. 
\end{Lemma}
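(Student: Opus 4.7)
The plan is a straightforward truncation-and-approximation argument: build an explicit approximating sequence $(\varphi_n)\subset C^1_b(X)$ converging to $\varphi$ in the graph norm of $\nabla_p$. Since $\varphi$ need not be bounded, I will cut off its range rather than its domain, because the hypothesis controls $\|\nabla\varphi\|$ precisely on sublevel sets $\varphi^{-1}(-r,r)$.

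Concretely, I would fix a $C^1$ function $\theta:\R\to\R$ with $\theta(t)=t$ for $|t|\leq 1$, $\theta(t)=0$ for $|t|\geq 2$, and $|\theta|,|\theta'|\leq 2$, and set $\theta_n(t):=n\,\theta(t/n)$, so that $\theta_n(t)=t$ for $|t|\leq n$, $\theta_n$ is supported in $[-2n,2n]$, $|\theta_n(t)|\leq 2|t|$ and $|\theta_n'|\leq 2$. Define $\varphi_n:=\theta_n\circ\varphi$. Clearly $\varphi_n\in C^1(X)$ and $|\varphi_n|\leq 2n$. The gradient is $\nabla\varphi_n=(\theta_n'\circ\varphi)\nabla\varphi$, which vanishes off $\varphi^{-1}([-2n,2n])\subset\varphi^{-1}(-(2n+1),2n+1)$; by hypothesis $\|\nabla\varphi\|$ is bounded on the latter set, so $\nabla\varphi_n$ is bounded on $X$, i.e.\ $\varphi_n\in C^1_b(X)$.

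Next I would check the two convergences by dominated convergence. Pointwise, $\varphi_n(x)\to\varphi(x)$ and $\theta_n'(\varphi(x))=\theta'(\varphi(x)/n)\to\theta'(0)=1$, so $\nabla\varphi_n(x)\to\nabla\varphi(x)$ in $X$ for every $x$. The pointwise bounds $|\varphi_n|\leq 2|\varphi|\in L^p(X,\nu)$ and $\|\nabla\varphi_n\|\leq 2\|\nabla\varphi\|\in L^p(X,\nu)$ supply integrable majorants, giving $\varphi_n\to\varphi$ in $L^p(X,\nu)$ and $\nabla\varphi_n\to\nabla\varphi$ in $L^p(X,\nu;X)$. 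By definition of the closure $\nabla_p$, this shows $\varphi\in W^{1,p}_0(X,\nu)$ with $\nabla_p\varphi=\nabla\varphi$.

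There is no real obstacle here; the only substantive point is that the hypothesis on $\|\nabla\varphi\|$ being bounded on sublevel sets is exactly what is needed to guarantee $\nabla\varphi_n\in L^\infty(X,\nu;X)$ so that $\varphi_n$ lands in $C^1_b(X)$ rather than merely in $C^1(X)\cap L^\infty(X,\nu)$. If one only knew $\nabla\varphi\in L^p$, the truncated gradients would still converge in $L^p$, but the approximants would not belong to the prescribed domain of $\nabla$.
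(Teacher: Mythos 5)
Your truncation argument is correct: the cutoffs $\theta_n$ keep $\varphi_n=\theta_n\circ\varphi$ in $C^1_b(X)$ precisely because the hypothesis bounds $\|\nabla\varphi\|$ on the sublevel sets where $\theta_n'\circ\varphi$ is nonzero, and dominated convergence then gives convergence in the graph norm of the closable operator $\nabla$. The paper omits its proof, referring to Lemma 2.4 of \cite{TAMS}, which is exactly this kind of range-truncation argument, so your route is essentially the intended one.
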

\begin{proof} The  prooof is  similar to the one Lemma 2.4 of \cite{TAMS}, and it is omitted. \end{proof}

The integration formula \eqref{h1'} is rewritten as
$$\int_X \langle \nabla \varphi, y\rangle d\nu = \int_X  \langle \nabla \varphi, R (R^{-1}y)\rangle d\nu = \int_X \varphi\, v_{R^{-1}y} d\nu, \quad \varphi\in C^1_b(X), \; y\in R(X). $$

Concerning the adjoint operators, the domain of $M^*_p$ in $L^{p'}(X, \nu;X)$  is equal to $D(\nabla_p^*R)$, and we have 
\begin{equation}
\label{duali}
M^*_pF = \nabla_p^*(RF), \quad  F\in D( M^*_p) = D(\nabla_p^*R). 
\end{equation}

Since $W^{1,p}_0(X, \nu)\subset W^{1,q}_0(X, \nu)$ and $\nabla_p = \nabla_q$ on $W^{1,p}_0(X, \nu)$ for $p\geq q$, we have $D( \nabla^*_q)\subset D(\nabla ^*_p)$ and $\nabla^*_q = \nabla^*_p$ for   $p\geq q$. We set 
$$\nabla^*F : = \nabla_pF, \quad F\in \cap_{p>1} D(\nabla ^*_p). $$
In particular, every vector field $F(x) = \sum_{i=1}^n f_i(x) y_i$, with $f_i\in C^1_b(X)$ and $y_i\in R(X)\setminus \{0\}$ for $i=1, \ldots, n$, is in the domain of $\nabla_p^*$ for every $p>1$, and 
\begin{equation}
\label{nabla_p^*}
\nabla^* F(x) = \sum_{i=1}^n \left( -\frac{\partial f_i}{\partial y_i} + v_{R^{-1}y_i}f_i\right). 
\end{equation}
Such vector fields are the appropriate test functions for the definition of $BV$ functions. So, according to \eqref{C1btilde}, we set
$$\widetilde{C}^1_b(X, R(X)) := \{ F\in C^1_b(X, X):\; F(X) \mbox{ is contained in a finite dimensional subspace of}\;R(X)\}. $$
\begin{Definition}
\label{def:BV0}
For each  $u\in L^1(X, \nu)$ such that $uv_z\in L^1(X, \nu)$ for every $z\in X$  we set
\begin{equation}
\label{V0(u)} 
V_0(u)  :=   \sup \bigg\{ \int_X u\, \nabla^* F \, d\nu: \; F\in \widetilde{C}^1_b(X ,R(X)), \; \|F(x)\| \leq 1\; \forall x\in X\bigg\}, 
\end{equation}
and we say that $u\in BV_0(X, \nu)$  if there exists a measure  $m_0\in {\mathcal M}(X, X)$ such that for each $y\in X$ we have
\begin{equation}
\label{BV0}
\int_X u(\langle  \nabla \varphi, y\rangle - v_{R^{-1}y} \varphi)d\nu = - \int_X\varphi\,d\langle m_0,y\rangle, \quad   \varphi\in C^1_b(X), 
\end{equation}
where
$$\langle m_0,y\rangle(B) := \langle m(B), y\rangle , \quad  B\in {\mathcal B}(X). $$
\end{Definition}

For every $F\in  \widetilde{C}^1_b(X ,X)$  we have of course   $RF\in  \widetilde{C}^1_b(X ,R(X))$ and 
$\int_X u \,M^*F \,d\nu = \int_X u \nabla^* (RF) \,d\nu$ by \eqref{duali}. Therefore, if $V_0(u) <+\infty$ we have $V(u) \leq V_0(u)\|R\|_{\mathcal L(X)} <+\infty$. 
Correspondingly, if $u\in BV_0(X, \nu)$ then $u\in BV(X, \nu)$ and the measure $m$ of Definition \ref{def:BV} is just given by $Rm_0$. 

As in the case of the $BV$ functions, the test functions in the definition of $V_0(u)$ may be chosen in different sets. The proof of the next lemma is similar to the proof of Lemma \ref{Le:caratterizzazioni} with obvious modifications, and it is left to the reader. We recall that the spaces $\widetilde{\mathcal{FC}}^1_b(X, R(X))$, $\widetilde{W}^{1,p'}(X,\nu; R(X))$ are defined in \eqref{FC1btilde}, \eqref{W1ptilde}, respectively. 

\begin{Lemma}
\label{Le:caratterizzazioni0}
Let $u\in L^1(X, \nu)$ be such that $uv_z\in L^1(X, \nu)$ for every $z\in X$. Then
\begin{equation}
\label{equiv20}
V_0(u) =   \sup\bigg\{ \int_X u \nabla ^*F \, d\nu :\; F\in \widetilde{\mathcal{FC}}^1_b(X, R(X)), \; \|F\|_{L^{\infty}(X, \nu;X)} \leq 1\bigg\}
\end{equation}
If in addition $u\in L^p(X, \nu)$ for some $p>1$ we have 
\begin{equation}
\label{equiv21}
V_0(u) =  \sup\bigg\{ \int_X u \nabla ^*F \, d\nu :\; F\in \widetilde{W}^{1,p'}(X,\nu; R(X))\cap L^{\infty}(X, \nu;X), \; \|F\|_{L^{\infty}(X, \nu;X)} \leq 1\bigg\},
\end{equation}
and  for any orthonormal basis $\{e_k:\; k\in \N\}$ of $X$ contained in $R(X)$ we have
\begin{equation}
\label{equiv30}
V_0(u) = \sup\bigg\{ \int_X u \nabla^*F \, d\nu :\; F(x) = \sum_{i=1}^n \varphi_i(x) e_i, \; n\in \N, \; \varphi_i\in C^1_b(X),        \; \|F\|_{\infty}   \leq 1\bigg\}. 
\end{equation}
\end{Lemma}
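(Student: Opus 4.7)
The plan is to run through the proof of Lemma \ref{Le:caratterizzazioni}(ii), reading $\nabla^*$ for $M^*$ and taking test vector fields with values in $R(X)$ rather than in $X$. In each of the three equalities, the supremum on the right-hand side is immediately bounded by $V_0(u)$ since the respective classes of test vector fields are contained in $\widetilde{C}^1_b(X, R(X))$, or, for \eqref{equiv21}, approximable by such. The substantive direction requires, given $F \in \widetilde{C}^1_b(X, R(X))$ with $\|F\|_\infty \leq 1$, the construction of approximations in the smaller class whose integrals $\int_X u\,\nabla^* F\,d\nu$ converge to the original one.

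For \eqref{equiv20}, given $F = \sum_{i=1}^n f_i y_i$ with $f_i \in C^1_b(X)$ and $y_i \in R(X)$, one replaces $f_i$ by its cylindrical truncation $f_i \circ P_m$ (with respect to any orthonormal basis of $X$). By Lemma \ref{Le:approx}(i), $f_i \circ P_m \to f_i$ and $\partial(f_i \circ P_m)/\partial y_i \to \partial f_i/\partial y_i$ pointwise with uniform sup-norm bounds, so $F_m := \sum_i (f_i \circ P_m) y_i \in \widetilde{\mathcal{FC}}^1_b(X, R(X))$ satisfies $\|F_m\|_\infty \leq \|F\|_\infty$ and $\nabla^* F_m \to \nabla^* F$ pointwise, with $|u\,\nabla^* F_m|$ dominated by a function in $L^1(X,\nu)$ since $uv_{R^{-1}y_i} \in L^1(X,\nu)$; dominated convergence then finishes the job. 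For \eqref{equiv21}, the argument of Lemma \ref{Le:caratterizzazioni}(ii) applies: given $F = \sum f_i y_i$ with $f_i \in W^{1,p'}(X,\nu)\cap L^\infty$, rescale by $1/(1+\delta)$ and use Lemma \ref{Le:Sobolev}(i) to approximate each $f_i/(1+\delta)$ $\nu$-a.e.\ and in $W^{1,p'}$ norm by $C^1_b$ functions, noting that $v_{R^{-1}y_i}\in L^{r}(X,\nu)$ for every $r\geq 1$ so $\nabla^* F_m \to \nabla^* F$ in $L^{p'}(X,\nu)$; pair against $u\in L^p$ and let $\delta \to 0$.

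For \eqref{equiv30}, given $F = \sum_{i=1}^n f_i y_i$ with $y_i \in R(X)$ and a basis $\{e_k\} \subset R(X)$ with $Re_k = \lambda_k e_k$, set $F_m := \sum_i f_i (P_m y_i)$, where $P_m$ is the projection onto $\mathrm{span}(e_1,\ldots,e_m)$. Rearranging gives $F_m = \sum_{j=1}^m \varphi_j^{(m)} e_j$ with $\varphi_j^{(m)} = \sum_i \langle y_i, e_j\rangle f_i \in C^1_b(X)$, which is exactly the admissible form, and $\|F_m\|_\infty \to \|F\|_\infty$ by dominated convergence. The convergence $\nabla^* F_m \to \nabla^* F$ in $L^{p'}(X,\nu)$ requires $v_{R^{-1}(P_m y_i)} \to v_{R^{-1}y_i}$ in $L^{p'}(X,\nu)$; by \eqref{vz} it suffices that $R^{-1}(P_m y_i) \to R^{-1}y_i$ in $X$, and writing $y_i = Rw_i$ with $w_i\in X$ one has $R^{-1}(P_m y_i) = P_m w_i \to w_i = R^{-1}y_i$, because $P_m$ commutes with $R$ on the eigenbasis. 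After renormalizing by $1/\|F_m\|_\infty$ (for $m$ large) to keep the sup norm at most $1$, the supremum in \eqref{equiv30} matches $V_0(u)$. The main obstacle in the whole argument lies in this third step: the identity $R^{-1}(P_m y_i) = P_m(R^{-1}y_i)$ depends on Hypothesis \ref{Hyp:R} and would not hold if the basis did not diagonalize $R$; this is why the basis is required to consist of eigenvectors of $R$.
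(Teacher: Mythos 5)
Your handling of \eqref{equiv20} and \eqref{equiv21} is exactly the ``obvious modification'' of Lemma \ref{Le:caratterizzazioni} that the paper intends (the paper leaves this proof entirely to the reader), and those parts are fine, modulo one imprecision inherited from the paper's own sketch of \eqref{equiv2+}: approximating each component $f_i$ with $\|f_{i,n}\|_\infty\le(1+\delta)\|f_i\|_\infty$ does not by itself give $\|\sum_i f_{i,n}y_i\|_\infty\le 1+\delta$ when the constraint is on the norm of the vector field rather than on the individual components, so a truncation of the vector field (not just of its components) is really needed there.

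The genuine gap is in \eqref{equiv30}. The lemma asserts the identity for \emph{any} orthonormal basis $\{e_k\}$ contained in $R(X)$, whereas your argument covers only bases of eigenvectors of $R$, and your closing remark that ``the basis is required to consist of eigenvectors of $R$'' misstates the hypothesis. You have correctly isolated the crux: one needs $v_{R^{-1}(P_m y_i)}\to v_{R^{-1}y_i}$ in $L^{p'}(X,\nu)$, which by \eqref{vz} reduces to $R^{-1}P_m y_i\to R^{-1}y_i$ in $X$, i.e.\ (writing $y_i=Rw_i$) to strong convergence of the finite-rank projections $R^{-1}P_mR$ to the identity. For an eigenbasis these are just the $P_m$ and there is nothing to prove; for a general orthonormal basis contained in $R(X)$ they are oblique projections whose strong convergence is not automatic --- equivalently, one would need $\operatorname{span}\{R^{-1}e_j\}$ to be dense in $X$, which does not follow merely from $\{e_j\}\subset R(X)$ when $R^{-1}$ is unbounded. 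So as written you prove a strictly weaker statement than \eqref{equiv30}. Two remarks soften this. First, the eigenbasis case is the only one the paper ever uses (Theorem \ref{Th:BV0} works with the eigenbasis $\{h_k\}$), so your version suffices for everything downstream. Second, \eqref{equiv30} is stated under the extra hypothesis $u\in L^p(X,\nu)$, $p>1$, and in that case the full statement can be recovered a posteriori: once $V_0(u)<+\infty$ gives $u\in BV_0(X,\nu)$ with $|m_0|(X)=V_0(u)$ (Theorem \ref{Th:BV0}), formula \eqref{BV0} with $y=e_j$ yields $\int_X u\,\nabla^*F\,d\nu=\int_X\langle F,dm_0\rangle$ for every $F=\sum_{j=1}^n\varphi_je_j$, and the supremum of the right-hand side over such $F$ with $\|F\|_\infty\le1$ equals $|m_0|(X)$ by the polar decomposition \eqref{polar} together with an approximation as in \eqref{vartot}; this route never requires $R^{-1}P_my_i\to R^{-1}y_i$.
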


As expected, a  result similar to
 \ref{Th:BV} holds.

\begin{Theorem}
\label{Th:BV0}
Let $u\in L^p(X, \nu)$ for some $p>1$. Then, $u\in BV_0(X, \nu)$ if and only if  $V_0(u) <+\infty$. 
In this case, the measure $m_0$ of Definition \ref{def:BV0} satisfies $|m_0|(X) = V_0(u)$. 
\end{Theorem}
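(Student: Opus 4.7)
The plan is to mimic the structure of the proof of Theorem \ref{Th:BV}, using Theorem \ref{carBVz} as the fundamental building block and exploiting the relation $\langle \nabla\varphi, R z\rangle = \langle R\nabla\varphi,z\rangle$ (which holds because $R=R^*$) to reduce statements about $\nabla^*$ to statements about $M^*$.

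The easy implication, $u\in BV_0(X,\nu)\Rightarrow V_0(u)<+\infty$, follows as in Theorem \ref{Th:BV}. Writing $m_0=\sigma_0|m_0|$ with $\|\sigma_0\|=1$ $|m_0|$-a.e., for every $F=\sum_{i=1}^n f_iy_i\in \widetilde{C}^1_b(X,R(X))$ with $\|F\|_\infty\le 1$, formula \eqref{nabla_p^*} together with \eqref{BV0} gives $\int_X u\,\nabla^*F\,d\nu=\int_X\langle\sigma_0,F\rangle\,d|m_0|\le|m_0|(X)$, so taking the supremum yields $V_0(u)\le|m_0|(X)$.

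For the converse, suppose $V_0(u)<+\infty$. For every $z\in X$ with $Rz\neq 0$, testing $V_0(u)$ against $F(x):=\varphi(x)Rz/\|Rz\|$ with $\varphi\in C^1_b(X)$, $\|\varphi\|_\infty\le 1$, and using $\langle \nabla\varphi,Rz\rangle=\langle R\nabla\varphi,z\rangle$ and $\nabla^*(\varphi Rz)=-\langle R\nabla\varphi,z\rangle+v_z\varphi=-\partial_z^*\varphi$, one gets $V_z(u)\le V_0(u)\|Rz\|$. Theorem \ref{carBVz} thus yields, for each such $z$, a real Borel measure $m_z$ with $|m_z|(X)=V_z(u)$ and $\int_X u\,\partial_z^*\varphi\,d\nu=-\int_X\varphi\,dm_z$ for every $\varphi\in C^1_b(X)$. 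Let $\{h_k\}$ be the basis from Hypothesis \ref{Hyp:R} with $Rh_k=\lambda_k h_k$, so $\lambda_k\neq 0$ since $R$ is injective; set $z_k:=h_k/\lambda_k$, so $Rz_k=h_k$, and $\mu_k:=m_{z_k}$ satisfies $|\mu_k|(X)\le V_0(u)$.

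Next I assemble the measures $\mu_k$ into a vector measure via Lemma \ref{supmisure vett}. For $M_n(B):=\sum_{k=1}^n\mu_k(B)h_k$, its range lies in $P_n(X)=\mathrm{span}\{h_1,\ldots,h_n\}$, so by \eqref{vartot} the total variation $|M_n|(X)$ is computed by testing against vector fields $F=\sum_{k=1}^n f_k h_k\in\widetilde{C}^1_b(X,R(X))$ with $\|F\|_\infty\le 1$. Since $\partial_{z_k}^*f_k=\langle\nabla f_k,h_k\rangle-v_{R^{-1}h_k}f_k=-\nabla^*(f_kh_k)$, one finds
\begin{equation*}
\int_X\langle F,dM_n\rangle=\sum_{k=1}^n\int_X f_k\,d\mu_k=-\sum_{k=1}^n\int_X u\,\partial_{z_k}^*f_k\,d\nu=\int_X u\,\nabla^*F\,d\nu\le V_0(u),
\end{equation*}
so $|M_n|(X)\le V_0(u)$ uniformly in $n$. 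Lemma \ref{supmisure vett} then produces $m_0\in\mathcal M(X,X)$ with $\langle m_0(B),h_k\rangle=\mu_k(B)$ and $|m_0|(X)\le V_0(u)$.

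Finally I verify \eqref{BV0}. For $y=h_k$ one has $R^{-1}y=z_k$, and the defining property of $\mu_k$ is exactly \eqref{BV0}; by linearity the identity holds for all $y$ in the algebraic span of $\{h_k\}$. To reach arbitrary $y\in R(X)$, write $y=Rz$ and approximate by $y_n=P_ny$. A direct computation gives $R^{-1}y_n=\sum_{k\le n}\langle y,h_k\rangle h_k/\lambda_k=P_nz\to z=R^{-1}y$ in $X$, so by \eqref{vz} we have $v_{R^{-1}y_n}\to v_{R^{-1}y}$ in $L^{p'}(X,\nu)$, while $\langle\nabla\varphi,y_n\rangle\to\langle\nabla\varphi,y\rangle$ pointwise and boundedly. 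The left-hand side of \eqref{BV0} therefore converges, using $u\in L^p(X,\nu)$, and the right-hand side converges by the Dominated Convergence Theorem applied to $|m_0|$. Combining with the easy direction gives the equality $|m_0|(X)=V_0(u)$.

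The main obstacle is essentially bookkeeping: the delicate point is the passage to the limit $y_n\to y$ on the right-hand side of \eqref{BV0}, which requires knowing both that $R^{-1}y_n$ stays in $X$ with a clean limit and that the hypothesis $u\in L^p$ with $p>1$ is needed precisely to use continuity $z\mapsto v_z$ into $L^{p'}$. The rest of the argument is a faithful transcription of the proof of Theorem \ref{Th:BV}, with the bookkeeping advantage that the chosen basis already consists of eigenvectors of $R$, which makes the construction of $m_0$ via partial sums $M_n$ completely transparent.
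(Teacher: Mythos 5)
Your proposal is correct and follows essentially the same route as the paper: reduce to the scalar measures $m_{R^{-1}h_k}$ via Theorem \ref{carBVz} along the eigenbasis of $R$, assemble them with Lemma \ref{supmisure vett} using the uniform bound $|M_{0n}|(X)\le V_0(u)$, and pass to general $y\in R(X)$ using that $P_n$ commutes with $R^{-1}$ on the range of $R$ together with the continuity $z\mapsto v_z$ into $L^{p'}$. The only cosmetic difference is that you bound $V_z(u)\le V_0(u)\|Rz\|$ directly by testing with $F=\varphi Rz/\|Rz\|$, where the paper routes through $V(u)\le V_0(u)\|R\|_{\mathcal L(X)}$ and Lemma \ref{BVimpliesBVz}; both are fine.
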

\begin{proof}  The proof of the implication $u\in BV_0(X, \nu)\Rightarrow V_0(u)\leq |m_0|(X) $ is identical to the corresponding proof in Theorem  \ref{Th:BV}, and it is omitted. 

In fact, also the proof of the converse  follows the procedure of  Theorem  \ref{Th:BV}, with suitable modifications. 

Let   $\{h_k:\; k\in \N\}$ be an orthonormal basis of $X$  consisting of eigenvectors of $R$. 
As usual, we denote by $P_n$ the orthogonal projection on the subspace spanned by $h_1$, \ldots , $h_n$. 

For every $k\in \N$ set $z_k = R^{-1}h_k$. As we already remarked, if $V_0(u)<\infty$ then $V(u)<\infty$ and therefore $V_{z_k}(u)<\infty$ for every $k\in \N$.
By Theorem \ref{carBVz}, for every $k$ there exists a real measure $m_{z_k}$ such that 
$$ \int_X u(\langle R\nabla \varphi, z_k\rangle - v_{z_k} \varphi)d\nu = -\int_X \varphi\,dm_{z_k},  \quad \varphi \in C^1_b(X),  $$
namely
\begin{equation}
\label{corr326}
 \int_X u( \nabla \varphi, h_k\rangle - v_{z_k} \varphi)d\nu = -\int_X \varphi\,dm_{z_k},  \quad \varphi \in C^1_b(X). 
 \end{equation}
As in Theorem  \ref{Th:BV}, we construct  $m_0$ approximating it by the sequence of measures
$$ M_{0n}(B) =  \sum_{k=1}^{n} m_{z_k}(B) h_k =  \sum_{k=1}^{n} m_{R^{-1}h_k}(B) h_k , \quad B\in \mathcal B(X) . $$
By \eqref{vartot}, for every $n\in \N$ we have
$$\begin{array}{lll}
|M_{0n}|(X) &  = & \ds \sup   \left\{ \int_X \langle F, dM_{0n}\rangle :\; F\in C^1_b(X, X), \; \|F\|_{\infty} \leq 1 \right\} 
\\
\\
& =  & \ds \sup   \left\{ \int_X \langle F, dM_{0n}\rangle :\; F\in C^1_b(X, P_n(X)), \; \|F\|_{\infty} \leq 1 \right\} . 
\end{array}$$
Each vector field $F\in C^1_b(X, P_n(X))$ may be written as $F(x) = \sum_{k=1}^n f_k(x) h_k$, with $f_k\in C_b(X)$. Therefore 
it
belongs to $\widetilde{C}^1_b(X, R(X))$. By Theorem \ref{carBVz} we have 
$$\int_X \langle F, dM_{0n}\rangle = \int_X \sum_{k=1}^n f_k\,d m_{z_k} = -  \int_X u\sum_{k=1}^n (\langle \nabla f_k, h_k\rangle - v_{z_k}f_k) \,d\nu $$
so that, recalling \eqref{nabla_p^*}, 
$$\int_X \langle F, dM_{0n}\rangle =  \int_X  u \nabla^*F\,d\nu \leq V_0(u)\|F\|_{\infty}. $$
Therefore, $|M_{0n}|(X) \leq V_0(u)$ for every $n\in \N$. By lemma \ref{supmisure vett} the series $(M_{0n}(B))$ converges for every Borel set $B$, and  setting
$$ m_0(B) :=  \sum_{k=1}^{\infty } m_{R^{-1}h_k}(B) h_k, \quad B\in \mathcal B(X), $$
 $m_0\in \mathcal M(X,X)$ is such that $|m|(X) \leq V(u)$. By definition, we have $\langle m_0(B), h_k\rangle =  m_{R^{-1}h_k}(B)$ for every $k\in \N$ and therefore \eqref{corr326} yields  \eqref{BV0} if $y=h_k$ for some $k\in \N$. Since $z\mapsto v_z$ is linear, \eqref{BV0} holds if $y$ is any linear combination of the $h_k$. 
 
 If  $y$  is not  a linear combination of the $h_k$, we approach it by  $y_n:= P_ny = \sum_{k=1}^n \langle y,h_k\rangle h_k$. For each  $n\in \N$ and $\varphi \in C^1_b(X)$ we have  
\begin{equation}
\label{approxM_0n}
\int_X u( \langle   \nabla \varphi, y_n\rangle -  v_{R^{-1}y_n} \varphi)d\nu = - \int_X \varphi \,dm_{R^{-1}P_ny} =  - \int_X \varphi \sum_{k=1}^{n} \langle y, h_k\rangle  dm_{R^{-1}h_k}
 =  - \int_X \varphi   \langle y, M_{0n}(dx) \rangle . 
 \end{equation}
By  Lemma  \ref{supmisure vett} the sequence of measures $(  \langle y, M_{0n}(dx) \rangle  )$ converge weakly to  $ \langle y, m_0(dx) \rangle  $, so that the right hand side converges to $ - \int_X \varphi   \langle y, m_0(dx) \rangle  $ as $n\to \infty$. 

Concerning the left hand side,  $\langle   \nabla \varphi, y_n\rangle $ pointwise converges to $ \langle  \nabla \varphi, y\rangle$ and its modulus does not exceed  $\| \nabla \varphi\|_{\infty} \|y\|$. Therefore, 
$$\lim_{n\to \infty} \int_X u  \langle   \nabla \varphi, y_n\rangle  d\nu  =  \int_X u  \langle   \nabla \varphi, y\rangle  d\nu  . $$
Since each $h_k$ is an eigenvector of $R$, $P_n$ commutes with $R^{-1}$ on the range of $R$. Therefore, $R^{-1}y_n = R^{-1}P_ny = P_n(R^{-1}y)$  converges to $R^{-1}y$ as  $n\to \infty$. By \eqref{vz}, $\lim_{n\to\infty}   v_{R^{-1}y_n} =  v_{R^{-1}y} $ in $L^q(X, \nu)$ for every $q>1$, and we have 
$$\lim_{n\to \infty} \int_X u \, v_{R^{-1}y_n }\varphi\,d\nu   =  \int_X u \, v_{R^{-1}y }\varphi\,d\nu . $$
So, letting $n\to \infty $ in \eqref{approxM_0n} yields  \eqref{BV0}  for every $y\in R(X)$. 
\end{proof}

\begin{Remark} As in the proof of Theorem \ref{Th:BV}, the assumption $u\in L^p(X, \nu)$ for some $p>1$ is used only in the last step of the proof of Theorem \ref{Th:BV0}. Also the assumption 
that $R=R^*$ and  that there exists an orthonormal basis of $X$ consisting of eigenvectors of $R$ is crucially used only in the   last step 
of the proof. Up to that, we only needed that $R$ is one to one, and that there exists an orthonormal basis of $X$ contained in $R^*(X)$. 
\end{Remark}

 \subsection{A characterization through semigroups}
 \label{subs:semigroups}
 
 An elegant characterization of $BV$ functions $u$, that goes back to De Giorgi in the case of the Lebesgue measure in $\R^n$, exploits the behavior as $t\to 0$ of $T(t)u$, where $T(t)$ is a suitable semigroup of linear operators.  The key tool is the existence of a smoothing semigroup $T(t)$ of linear operators in $L^p(X, \nu)$, with good commutation properties with directional derivatives. 
 In finite dimension  the heat semigroup commutes with all directional derivatives, and this is the best situation. In finite and in infinite dimension, there is a very handy 
 commutation formula for the classical Ornstein-Uhlenbeck semigroup: we have $\partial/\partial h (T(t) \varphi) = e^{-t} T(t) \partial \varphi/\partial h$, for every $h$ in the Cameron-Martin space and $t>0$. In general, simple commutation formulae are not available, and the method works under some technical unelegant assumptions that, however,  are  satisfied in significant examples. See Section 5. 
 
 
 \begin{Proposition}
 \label{semigruppoA}
 Let $\{ T(t):\; t\geq 0\}$ be a strongly continuous semigroup of linear operators in $L^p(X, \nu)$ with $p>1$, such that 
 \begin{equation}
\exists q>1: \;\; T(t)(L^p(X,\nu) ) \subset W^{1,q}(X, \nu) \;\;\; \forall t>0. 
\label{regolarizz}
\end{equation} 
Then, 
\begin{itemize}
\item[(a)] for every $u\in L^p(X, \nu)$ and for every  $z\in X$   such that $R^*z\neq 0$ we have 
 \begin{equation}
 \label{implz} \liminf_{t\to 0^+} \int_X \left| \frac{\partial }{\partial R^*z}T(t)u\right| d\nu <+\infty 
  \Longleftrightarrow \liminf_{t\to 0^+} \int_X  |\partial^*_zT(t)u | d\nu <+\infty  \Longrightarrow  u\in BV_z(X, \nu), 
  \end{equation}
and in this case 
$$V_z(u) \leq  \liminf_{t\to 0^+} \int_X \left| \frac{\partial }{\partial R^*z}T(t)u\right| d\nu. $$
Moreover, 
 \begin{equation}
 \label{impl} \liminf_{t\to 0^+} \int_X \|M_qT(t)u\|\, d\nu <+\infty 
    \Longrightarrow  u\in BV(X, \nu), \quad V(u) \leq \liminf_{t\to 0^+} \int_X \|M_qT(t)u\|\, d\nu. 
  \end{equation}
\item[(b)] If in addition Hypothesis \ref{Hyp:R} holds and 
 \begin{equation}
\exists q>1: \;\; T(t)(L^p(X,\nu) ) \subset W^{1,q}_0(X, \nu) \;\;\; \forall t>0, 
\label{regolarizz0}
\end{equation} 
then  for every $u\in L^p(X, \nu)$ we have
 \begin{equation}
 \label{impl0} \liminf_{t\to 0^+} \int_X \|\nabla T(t)u\|\, d\nu <+\infty 
    \Longrightarrow  u\in BV_0(X, \nu), \quad V_0(u) \leq  \liminf_{t\to 0^+} \int_X \|\nabla T(t)u\|\, d\nu. 
  \end{equation}
\end{itemize}
\end{Proposition}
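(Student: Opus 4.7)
The plan is to read each liminf as a uniform bound on a natural linear functional, and then invoke the representation theorems \ref{carBVz}, \ref{Th:BV}, \ref{Th:BV0} already established in this section. The crucial mechanism is integration by parts against $T(t)u$ (which lies in the Sobolev space where formulae \eqref{parti0} and \eqref{Th:div} are available), combined with the strong continuity of $T(t)$ on $L^p(X,\nu)$.

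For the equivalence in (a), write $\partial_z^* T(t)u = \partial T(t)u/\partial R^*z - v_z T(t)u$; by the triangle inequality the two $L^1$-norms differ by at most $\|v_z\|_{L^{p'}}\|T(t)u\|_{L^p}$, which is bounded near $t=0$ by strong continuity and by \eqref{vz}, so both liminfs are finite simultaneously. For the main implication, fix $\varphi \in C^1_b(X)$ with $\|\varphi\|_\infty \le 1$. Since $\partial_z^*\varphi = \langle R\nabla\varphi,z\rangle - v_z\varphi \in L^{p'}(X,\nu)$ (thanks again to $v_z\in L^{p'}$ and the boundedness of $\varphi$, $R\nabla\varphi$), strong continuity yields
\begin{equation*}
T_z\varphi = \int_X u\,\partial_z^*\varphi\,d\nu = \lim_{t\to 0^+}\int_X T(t)u\,\partial_z^*\varphi\,d\nu.
\end{equation*}
Now, because $T(t)u\in W^{1,q}(X,\nu)$, the integration by parts identity \eqref{parti0} applies and gives $\int_X T(t)u\,\partial_z^*\varphi\,d\nu = -\int_X(\partial T(t)u/\partial R^*z)\,\varphi\,d\nu$, whose modulus is at most $\|\varphi\|_\infty \int_X|\partial T(t)u/\partial R^*z|\,d\nu$. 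Selecting a sequence $t_n\to 0^+$ that realises the liminf and taking the supremum over $\varphi$ then produces $V_z(u)\le \liminf_{t\to 0^+}\int_X|\partial T(t)u/\partial R^*z|\,d\nu$, and Theorem \ref{carBVz} promotes this to $u\in BV_z(X,\nu)$.

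The second implication in (a) is the same mechanism, only vectorial. For $F\in\widetilde{C}^1_b(X,X)$ with $\|F\|_\infty\le 1$, formula \eqref{div} shows $M^*F\in L^{p'}(X,\nu)$, so strong continuity gives $\int_X u\,M^*F\,d\nu = \lim_{t\to 0^+}\int_X T(t)u\,M^*F\,d\nu$. Since $T(t)u\in W^{1,q}(X,\nu)$, formula \eqref{Th:div} rewrites the right-hand side as $\int_X\langle M_qT(t)u,F\rangle\,d\nu$, whose modulus is bounded by $\int_X\|M_qT(t)u\|\,d\nu$. Passing to the liminf and taking the supremum over $F$ with $\|F\|_\infty\le 1$ yields $V(u)\le \liminf_{t\to 0^+}\int_X\|M_qT(t)u\|\,d\nu$, and Theorem \ref{Th:BV} delivers $u\in BV(X,\nu)$ with the stated estimate.

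Part (b) is handled in the same way under Hypothesis \ref{Hyp:R}: for $F\in\widetilde{C}^1_b(X,R(X))$ with $\|F\|_\infty\le 1$, formula \eqref{nabla_p^*} shows $\nabla^*F\in L^{p'}(X,\nu)$, and \eqref{regolarizz0} together with the identity $\int_X T(t)u\,\nabla^*F\,d\nu = \int_X\langle\nabla_qT(t)u,F\rangle\,d\nu$ gives $\bigl|\int_X u\,\nabla^*F\,d\nu\bigr|\le \liminf_{t\to 0^+}\int_X\|\nabla T(t)u\|\,d\nu$. Theorem \ref{Th:BV0} then yields $u\in BV_0(X,\nu)$ with $V_0(u)$ bounded by the liminf. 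The one recurring obstacle is the justification that the integrands $\partial_z^*\varphi$, $M^*F$, $\nabla^*F$ lie in $L^{p'}(X,\nu)$; this is the only place where the assumption $v_z\in\bigcap_{p>1}L^p(X,\nu)$ of Hypothesis \ref{h1'} (rather than a single $L^{p_0}$) is indispensable for allowing $p$ and $q$ in the statement to be completely decoupled.
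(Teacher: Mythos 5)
Your proof is correct and follows essentially the same route as the paper's: bound the $v_z T(t)u$ term via strong continuity and H\"older to get the equivalence of the two liminfs, then pass the test function pairing through $T(t)u\in W^{1,q}$ via the integration by parts formulae and conclude with Theorems \ref{carBVz}, \ref{Th:BV}, \ref{Th:BV0}. The closing remark about where $v_z\in\bigcap_{p>1}L^p(X,\nu)$ is needed to decouple $p$ from $q$ is a correct and worthwhile observation, though the paper does not spell it out.
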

 \begin{proof}
(a) Fix $u\in L^p(X, \nu)$, and let $z\in X$ be such that $R^*z\neq 0$. Fix $M>0$, $\omega \in \R$ such that 
 $\|T(t) \|_{{\mathcal L}(L^p(X, \nu))} \leq M e^{\omega t}$ for every  $t>0$. Then,   
 $$\int_{X} | T(t)u(x) v_z(x)| \nu(dx)\leq M e^{\omega t} \|u\|_{L^p(X, \nu)} \|v_z\|_{L^{p'}(X, \nu)}, \quad t>0, $$
 so that, recalling \eqref{partial*z}, 
 $$ \liminf_{t\to 0^+} \int_X \bigg| \frac{\partial }{\partial R^*z}T(t)u\bigg| d\nu <+\infty 
  \Longleftrightarrow \liminf_{t\to 0^+} \int_X  |\partial^*_zT(t)u | d\nu <+\infty . $$
  
If one of the above equivalent conditions hold, it is easy to see that $u\in BV_z(X, \nu)$. 
Indeed, let $(t_n)\to 0$  be such that 
\begin{equation}
\label{Jz(u)}
\lim_{n\to\infty}  \int_X \bigg| \frac{\partial }{\partial R^*z}T(t_n)u\bigg| d\nu =: l \in \R. 
 \end{equation}
Since $L^p-\lim_{t\to 0} T(t)u= u$, for every $\varphi\in C^1_b(X)$ we have 
$$\int_X u \,\partial^*_z \varphi\,d\nu  = \lim_{n\to \infty} \int_X (T(t_n)u) \,\partial^*_z \varphi\,d\nu . $$
Since $T(t_n)u\in W^{1,q}(X, \nu)$ by assumption, the integration by parts formula \eqref{parti} yields 
$$\int_X  (T(t_n)u ) \,\partial^*_z \varphi\,d\nu =
-   \int_X \varphi \,\frac{\partial }{\partial R^*z}T(t_n)u\,d\nu, \quad n\in \N, $$
and therefore, by \eqref{Jz(u)}, 
$$\int_X u \,\partial^*_z \varphi\,d\nu  =  
-  \lim_{n\to \infty} \int_X \varphi \,\frac{\partial }{\partial R^*z}T(t_n)u\,d\nu \leq l\|\varphi\|_{\infty}. $$
Consequently, $u\in BV_z(X, \nu)$ and $V_z(u)\leq  \liminf_{t\to 0^+} \| \partial T(t)u/\partial R^*z \|_{L^1(X, \nu)}$. 

The proof of the other  statements are similar; let us prove  \eqref{impl}. Let $(t_n)\to 0$  be such that 
\begin{equation}
\label{J(u)}
\lim_{n\to\infty}   \int_X \|M_qT(t_n)u\|\, d\nu =: L \in \R . 
 \end{equation}
Since $L^p-\lim_{t\to 0} T(t)u= u$, for every $F\in \widetilde{C}^1_b(X, X)$ we have 
$$\int_X u \,M^*F\,d\nu  = \lim_{n\to \infty} \int_X (T(t_n)u) \,M^*F\,d\nu . $$
Since $M^*F = M^*_qF$ we get
$$\int_X u \,M^*F\,d\nu  =  \lim_{n\to \infty} \int_X\langle M_qT(t_n)u, F\rangle \,d\nu \leq L\|F\|_{\infty}. $$
Therefore, $u\in BV(X, \nu)$ and $V(u)\leq   \liminf_{t\to 0^+} \| M_qT(t)u \|_{L^1(X, \nu;X)}$. 
\end{proof}

In the next proposition we show that the converse implications in formulae \eqref{implz}, \eqref{impl}, \eqref{impl0} hold, under additional assumptions on the semigroup $T(t)$. 

 \begin{Proposition}
 \label{semigruppoB}
 Let $\{ T(t):\; t\geq 0\}$ be a strongly continuous semigroup of linear operators in $L^p(X, \nu)$ with $p>1$, satisfying \eqref{regolarizz}. 
 \begin{itemize}
 \item[(a)] 
 Let $z\in X$ be such that $R^*z\neq 0$ and such that 
 \begin{equation}
 \label{commutazione} \frac{\partial}{\partial R^*z}T(t)\varphi = S_1(t) \frac{\partial  \varphi }{\partial R^*z} + S_2(t) \varphi, \quad \varphi\in C^1_b(X), \; t>0, 
 \end{equation}
 where  $S_1(t) $, $S_2(t)\in  {\mathcal L } (L^p(X, \nu), L^1(X, \nu))$ for every $t>0$,  $S_1(t)^*(C^1_b(X)) \subset W^{1,p'}(X, \nu)\cap L^{\infty}(X, \nu)$, 
and for every $t>0$ there exist $c_1(t) $, $c_2(t) \geq 0$ such that
 \begin{equation}
 \label{regolarizzT*}
\left\{  \begin{array}{lll}
(i) &  \|  S_1(t)^*\varphi \|_{ \infty} \leq c_1(t)\|\varphi\|_{\infty}  , \quad   \;\varphi\in C^1_b(X), 
 \\
 \\
(ii)&   \|S_2(t)\varphi \|_{L^1(X, \nu)} \leq c_2(t)\|\varphi\|_{L^p(X, \nu)}, \quad    \;\varphi\in C^1_b(X). 
 \end{array} \right. 
 \end{equation}

Then for every $u\in L^p(X, \nu)$ and $t>0$ we have
 \begin{equation}
 \label{implicazione}
  u\in BV_z(X, \nu) \Longrightarrow   \int_X \left| \frac{\partial }{\partial R^*z}T(t)u\right| d\nu  \leq V_z(u)c_1(t) + c_2(t) <+\infty. 
  \end{equation}
\item[(b)]
Assume that 
 \begin{equation}
 \label{commutazionevett} M_qT(t)\varphi = {\bf S}_1(t)M \varphi  +  {\bf S}_2(t) \varphi, \quad \varphi\in C^1_b(X), \; t>0, 
 \end{equation}
 where for every $t>0$,  $ {\bf S}_1(t)\in  {\mathcal L } (L^p(X, \nu;X), L^1(X, \nu;X)) $,    $ {\bf S}_1(t)^*$ maps $\widetilde{C}^1_b(X;X))$ 
into  $\widetilde{W}^{1,p'}(X, \nu;X)$ $ \cap$  $L^{\infty}(X, \nu;X)$, 
and  there exist $C_1(t) $, $C_2(t) \geq 0$ such that  
 \begin{equation}
 \label{regolarizzT*vett}
\left\{  \begin{array}{lll}
(i) &  \|  {\bf S}_1(t)^*F \|_{ \infty} \leq C_1(t)\|F\|_{\infty}   , \quad  F\in \widetilde{C}^1_b(X, X), 
 \\
 \\
(ii) &   \| {\bf S}_2(t)\varphi \|_{L^1(X, \nu;X)} \leq C_2(t) \|\varphi \|_{  L^p(X, \nu )}, \quad  \varphi \in \widetilde{C}^1_b(X). 
 \end{array} \right. 
 \end{equation}
Then for every $u\in L^p(X, \nu)$ and $t>0$ we have
 \begin{equation}
 \label{implicazione1}
 u\in BV(X, \nu) \Longrightarrow   \int_X\|M_qT(t)u\|\,d\nu\leq V(u)C_1(t) + C_2(t) <+\infty . 
 \end{equation}
\item[(c)]
If Hypothesis \ref{Hyp:R} holds, and  in addition $T(t)(L^p(X, \nu))\subset W^{1,q}_0(X, \nu)$ for every $t>0$ and 
 \begin{equation}
 \label{commutazionevett0} \nabla_qT(t)\varphi = {\bf S}_1(t)\nabla  \varphi  +  {\bf S}_2(t) \varphi, \quad \varphi\in C^1_b(X), \; t>0, 
 \end{equation}
 where   for every $t>0$,  $ {\bf S}_1(t) \in  {\mathcal L } (L^p(X, \nu;X), L^1(X, \nu;X))$,  $S_1(t)^*$ maps $\widetilde{C}^1_b(X;X))$ into $ \widetilde{W}^{1,p'}_0(X, \nu;X)$ $\cap$ $ L^{\infty}(X, \nu;X)$, 
and there exist $C_1(t) $, $C_2(t) >0$ such that \eqref{regolarizzT*vett} holds, 
then for every $u\in L^p(X, \nu)$ and $t>0$ we have
 \begin{equation}
 \label{implicazione2}
 u\in BV_0(X, \nu) \Longrightarrow  \int_X\|\nabla_qT(t)u\|\,d\nu \leq V_0(u)C_1(t) + C_2(t) <+\infty . 
 \end{equation}
 \end{itemize}
 \end{Proposition}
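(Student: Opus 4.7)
The plan is to dualize each of the commutation formulae \eqref{commutazione}, \eqref{commutazionevett}, \eqref{commutazionevett0} and combine the resulting identity with the $L^1$-norm representations of Remark \ref{Rem:normaL^1} and the extended supremal characterizations of $V_z(u)$, $V(u)$, $V_0(u)$ given in Lemmas \ref{Le:caratterizzazioni} and \ref{Le:caratterizzazioni0}. In particular, the finiteness bounds then follow by decomposing each of the relevant $L^1$-norms as a supremum over $C^1_b$ (or $\widetilde{C}^1_b$) test functions and bounding the two pieces separately, one via the $BV$ hypothesis and one via the $L^p\to L^1$ estimate on $S_2(t)$ or $\mathbf{S}_2(t)$.

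For part (a), fix $\psi\in C^1_b(X)$ with $\|\psi\|_\infty\le 1$, multiply \eqref{commutazione} by $\psi$, integrate, and use that $S_1(t)^*\psi\in W^{1,p'}\cap L^\infty$ by hypothesis to apply the integration-by-parts formula \eqref{parti} with $u:=S_1(t)^*\psi$; this moves the derivative off $\varphi$ and yields, for every $\varphi\in C^1_b(X)$,
\[
\int_X\frac{\partial T(t)\varphi}{\partial R^*z}\,\psi\,d\nu
=-\int_X\varphi\,\partial^*_z(S_1(t)^*\psi)\,d\nu+\int_X\varphi\,S_2(t)^*\psi\,d\nu.
\]
Both sides are continuous in $\varphi\in L^p(X,\nu)$ (the left-hand side is also equal to $-\int_X\varphi\,T(t)^*\partial^*_z\psi\,d\nu$ after integrating by parts against $T(t)\varphi\in W^{1,q}$), so by density of $C^1_b(X)$ in $L^p(X,\nu)$ the identity persists with $\varphi$ replaced by $u\in L^p(X,\nu)$. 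The first term on the right is then bounded by $c_1(t)V_z(u)\|\psi\|_\infty$, invoking \eqref{equiv1+} together with $\|S_1(t)^*\psi\|_\infty\le c_1(t)\|\psi\|_\infty$, and the second is rewritten as $\int_X(S_2(t)u)\psi\,d\nu$ and bounded by $c_2(t)\|u\|_{L^p}\|\psi\|_\infty$ via \eqref{regolarizzT*}(ii). Taking the supremum over admissible $\psi$ and using Remark \ref{Rem:normaL^1} gives \eqref{implicazione}.

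Parts (b) and (c) follow exactly the same template with vector-field test functions. In (b), one pairs $F\in\widetilde{C}^1_b(X,X)$ with $\|F\|_\infty\le 1$ against \eqref{commutazionevett} applied to $\varphi\in C^1_b(X)$; the hypothesis that $\mathbf{S}_1(t)^*F\in\widetilde{W}^{1,p'}(X,\nu;X)\cap L^\infty$ places $\mathbf{S}_1(t)^*F$ in $D(M^*_p)$ by the paragraph after \eqref{W1ptilde}, so formula \eqref{e1m} transfers the $M$-derivative to the test field and, after extending by density to $u\in L^p$, one obtains
\[
\int_X\langle M_qT(t)u,F\rangle\,d\nu
=\int_X u\,M^*(\mathbf{S}_1(t)^*F)\,d\nu+\int_X u\,\mathbf{S}_2(t)^*F\,d\nu.
\]
The first integral is at most $C_1(t)V(u)\|F\|_\infty$ by \eqref{equiv2+} and \eqref{regolarizzT*vett}(i); the second equals $\int_X\langle\mathbf{S}_2(t)u,F\rangle\,d\nu$ and is bounded by $C_2(t)\|u\|_{L^p}\|F\|_\infty$. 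Taking the supremum over $F$ and using the vector form of Remark \ref{Rem:normaL^1} yields \eqref{implicazione1}. Part (c) is word-for-word the same with $M_q$ replaced by $\nabla_q$, $M^*$ by $\nabla^*$, $V$ by $V_0$, and with the characterization \eqref{equiv21} invoked in place of \eqref{equiv2+}, the analogous adjoint identity for $\nabla^*$ being supplied by \eqref{nabla_p^*}.

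The main obstacle is the first bookkeeping step: verifying that the regularity hypotheses on the ranges of $S_1(t)^*$, $\mathbf{S}_1(t)^*$ are exactly what is needed both to perform the integration by parts in \eqref{parti} (resp.\ \eqref{e1m}) and to land on test functions lying in the extended classes of Lemmas \ref{Le:caratterizzazioni} and \ref{Le:caratterizzazioni0}, so that the hypothesis $u\in BV_z(X,\nu)$ (resp.\ $BV(X,\nu)$, $BV_0(X,\nu)$) can actually be applied. Once the adjoint commutation identity is in place, the stated quantitative bounds are a direct consequence of the operator estimates $c_i(t)$, $C_i(t)$.
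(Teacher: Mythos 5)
Your proposal is correct and follows essentially the same route as the paper's own proof: in each part one represents the relevant $L^1$-norm as a supremum over $C^1_b$ (resp.\ $\widetilde{C}^1_b$) test functions via Remark \ref{Rem:normaL^1}, dualizes the commutation formula so that the derivative lands on $S_1(t)^*\psi$ (resp.\ $\mathbf{S}_1(t)^*F$), and then bounds the two resulting terms by $V_z(u)c_1(t)$ (via the extended characterizations \eqref{equiv1+}, \eqref{equiv2+}, \eqref{equiv21}) and by the $L^p\to L^1$ estimate on $S_2(t)$. Your extra care in first establishing the identity for $\varphi\in C^1_b(X)$ and then passing to $u\in L^p(X,\nu)$ by density is a presentational refinement of the paper's direct adjoint computation, not a different argument.
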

 \begin{proof}
Assume   that \eqref{commutazione},  \eqref{regolarizzT*}   hold. By \eqref{regolarizzT*}(ii), $S_2(t)$ has an extension (still called $S_2(t)$) belonging to ${\mathcal L}(L^p(X, \nu), L^1(X, \nu))$.  Fix $u\in L^{p}(X, \nu)$ such that $V_z(u)<+\infty$. 
By Remark \ref{Rem:normaL^1},  for every $t>0$ we have
$$ \int_X  \bigg| \frac{\partial }{\partial R^*z}T(t)u\bigg| d\nu = \sup \bigg\{ \int_X   \varphi \,\frac{\partial }{\partial R^*z}T(t)u \, d\nu : \; \varphi\in C^1_b(X), \, \|\varphi\|_{\infty} \leq 1 \bigg\}. $$
For   $\varphi\in C^1_b(X)$  and for $t>0$ we have, still by formula   \eqref{parti}, 
$$
\begin{array}{l}
\ds  \int_X  \varphi \, \frac{\partial }{\partial R^*z}T(t)u\, d\nu =  \int_X u \left(  \frac{\partial }{\partial R^*z}T(t)\right)^*\varphi\,d\nu 
\\
\\
\ds =   \int_X u \left(  S_1(t)  \frac{\partial }{\partial R^*z} + S_2(t)\right)^*\varphi\,d\nu =    \int_X u  (  \partial^*_z  S_1(t)^*\varphi + S_2(t)^*\varphi) \,d\nu 
\\
\\
\leq V_z(u) \|S_1(t)^*\varphi \|_{\infty}  + \| S_2(t)^*\|_{{\mathcal L}(L^{\infty}(X, \nu), L^{p'}(X, \nu)} \|\varphi \|_{\infty} 
\\
\\
\leq (V_z(u) c_1(t) + c_2(t)) \|\varphi \|_{\infty}, 
\end{array}$$
and \eqref{implicazione} follows. 

 The proofs of \eqref{implicazione1}, \eqref{implicazione2} are similar; we prove  \eqref{implicazione1}. Fix $u\in L^{p}(X, \nu)$ such that $V(u)<+\infty$. 
By Remark \ref{Rem:normaL^1},  for every $t>0$ we have
$$ \int_X  \|M_qT(t)u\| \, d\nu = \sup \bigg\{ \int_X \langle M_qT(t)u, F\rangle \, d\nu : \; F\in \widetilde{C}^1_b(X, X), \, \|F\|_{\infty} \leq 1 \bigg\}. $$
\eqref{regolarizzT*vett} yields that ${\bf S}_2(t)$ has an extension (still called ${\bf S}_2(t)$) belonging to ${\mathcal L}(L^1(X, \nu), L^p(X, \nu;X))$, with norm $\leq C_2(t)$. 
For $ F\in \widetilde{C}^1_b(X, X)$   and for $t>0$ we have 
$$\begin{array}{l}
\ds  \int_X  \langle M_qT(t)u, F\rangle \, d\nu =  \int_X u ( M_qT(t) )^*F\,d\nu 
\\
\\
\ds =   \int_X u (  {\bf S}_1(t) M_q + {\bf S}_2(t) )^*F\,d\nu =    \int_X u  ( M_q^* {\bf S}_1(t)^* F +  {\bf S}_2(t)^*F) \,d\nu 
\\
\\
\leq V(u) \| {\bf S}_1(t)^*F \|_{\infty}  + \|  {\bf S}_2(t)^*\|_{{\mathcal L}(L^{\infty}(X, \nu;X), L^{p'}(X, \nu;X))} \|F\|_{\infty} 
\\
\\
\leq (V(u) C_1(t) + C_2(t)) \|F\|_{\infty} 
\end{array}$$
and \eqref{implicazione1} follows. 
 \end{proof}

Combining  the statements of Propositions \ref{semigruppoA} and  \ref{semigruppoB} we obtain the next corollary.

\begin{Corollary}
\label{uguali}
If the assumptions of  Proposition \ref{semigruppoB} hold and  the functions $C_1$, $C_2$ in  \eqref{regolarizzT*vett}(ii) are bounded near $t=0$, for every $u\in L^p(X, \nu)$ we have
$$\begin{array}{l} 
\ds u\in BV(X, \nu) \Longleftrightarrow  \liminf_{t\to 0} \int_X\|M_qT(t)u\|\,d\nu  <+\infty ,
\\
\\
\ds  u\in BV_0(X, \nu) \Longleftrightarrow  \liminf_{t\to 0} \int_X\|\nabla_qT(t)u\|\,d\nu  <+\infty. \end{array}$$
If in addition  
$\lim_{t\to 0} C_1(t) =1$, $\lim_{t\to 0} C_2(t) =0$, we respectively get 
$$V(u) = \liminf_{t\to 0}  \int_X  \| M_qT(t)u\| \, d\nu , \quad V_0(u) = \liminf_{t\to 0}  \int_X  \| \nabla_qT(t)u\| \, d\nu. $$
\end{Corollary}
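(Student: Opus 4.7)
The proof is essentially a bookkeeping exercise that combines the two preceding propositions, so I would present it as such and treat only the $BV$ case in detail, noting that the $BV_0$ case is obtained by replacing $M_q$, $V$, Proposition \ref{semigruppoA}(a), Proposition \ref{semigruppoB}(b) with $\nabla_q$, $V_0$, Proposition \ref{semigruppoA}(b), Proposition \ref{semigruppoB}(c), respectively.

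For the equivalence, the plan is as follows. The implication
$$\liminf_{t\to 0} \int_X \|M_q T(t) u\|\, d\nu < +\infty \Longrightarrow u\in BV(X,\nu)$$
is exactly \eqref{impl} in Proposition \ref{semigruppoA}(a), so there is nothing to do in this direction. For the reverse implication, I assume $u\in BV(X,\nu)$ and apply Proposition \ref{semigruppoB}(b) to get
$$\int_X \|M_q T(t) u\|\, d\nu \leq V(u)\,C_1(t) + C_2(t), \qquad t>0.$$
Since $C_1, C_2$ are assumed bounded in a right neighborhood of $0$, the right-hand side stays bounded as $t\to 0^+$, hence the $\liminf$ is finite.

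For the quantitative statement under the assumption $C_1(t)\to 1$ and $C_2(t)\to 0$, I would argue by two inequalities. If $u\in BV(X,\nu)$, taking $\liminf_{t\to 0}$ in the bound above yields
$$\liminf_{t\to 0}\int_X \|M_q T(t) u\|\,d\nu \leq V(u),$$
while the opposite inequality is the last assertion of Proposition \ref{semigruppoA}(a). Combining them gives equality. If instead $u\notin BV(X,\nu)$, then $V(u) = +\infty$ by convention, and the $\liminf$ must also equal $+\infty$, for if it were finite, Proposition \ref{semigruppoA}(a) would force $u\in BV(X,\nu)$, a contradiction; hence equality holds trivially in the extended sense. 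The $BV_0$/$V_0$/$\nabla_q$ version follows by the same two-inequality argument using the corresponding parts (b) and (c) of the two propositions.

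There is no real obstacle: once Propositions \ref{semigruppoA} and \ref{semigruppoB} are in place, the corollary is an immediate juxtaposition of their conclusions, with the only mild care needed being the handling of the extended-real case $V(u)=+\infty$ (and $V_0(u)=+\infty$) to make the equality meaningful when $u$ fails to be $BV$.
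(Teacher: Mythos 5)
Your proposal is correct and follows exactly the route the paper intends: the corollary is stated immediately after the remark ``Combining the statements of Propositions \ref{semigruppoA} and \ref{semigruppoB} we obtain the next corollary,'' and no further proof is given, so your juxtaposition of \eqref{impl} (and its $BV_0$ analogue) with \eqref{implicazione1}, \eqref{implicazione2} is precisely the argument being invoked. Your extra care with the extended-real case $V(u)=+\infty$ (justified via Theorem \ref{Th:BV}, since $u\in BV(X,\nu)$ iff $V(u)<+\infty$) is a harmless and correct refinement of what the paper leaves implicit.
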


 As a corollary of Proposition \ref{semigruppoB}, we have a further characterization of $BV$ and $BV_0$ functions. 
 
\begin{Corollary}
Let $u\in L^p(X, \nu)$ be such that  
\begin{equation}
\label{carattun}
\exists u_n\in W^{1,p}(X, \nu): \; \lim_{n\to\infty}\|u_n-u\|_{L^p(X, \nu)} =0, \; \sup_{n\in\N} \int_X \|M_pu_n\| \,d\nu <+\infty . 
\end{equation}
Then, $u\in BV(X, \nu)$. Similarly, if Hypothesis \ref{Hyp:R}  holds, and  $u\in L^p(X, \nu)$ is such that  
\begin{equation}
\label{carattun0}
\exists u_n\in W^{1,p}_0(X, \nu): \; \lim_{n\to\infty}\|u_n-u\|_{L^p(X, \nu)} =0, \; \sup_{n\in\N} \int_X \|\nabla_pu_n\| \,d\nu <+\infty , 
\end{equation}
then $u\in BV_0(X, \nu)$. 

Conversely, if the assumptions of  Proposition \ref{semigruppoB}(b) hold,  every $u\in L^p(X, \nu)\cap BV(X, \nu)$ satisfies \eqref{carattun}. If the assumptions of  Proposition \ref{semigruppoB}(c) hold,  every $u\in L^p(X, \nu)\cap BV_0(X, \nu)$ satisfies \eqref{carattun0}. 
\end{Corollary}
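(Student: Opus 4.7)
The plan is to treat the two implications separately: sufficiency reduces to the characterization $V(u)<+\infty$ via Theorems \ref{Th:BV} and \ref{Th:BV0}, while necessity follows directly from Proposition \ref{semigruppoB} applied to $u_n:=T(t_n)u$ for a sequence $t_n\to 0^+$.

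For the sufficiency of the $BV$ part, I would fix a sequence $u_n\in W^{1,p}(X,\nu)$ with $u_n\to u$ in $L^p(X,\nu)$ and $K:=\sup_n\|M_pu_n\|_{L^1(X,\nu;X)}<+\infty$. First I would note that $uv_z\in L^1(X,\nu)$ for every $z\in X$ by H\"older's inequality, since $u\in L^p(X,\nu)$ and $v_z\in L^{p'}(X,\nu)$ by Hypothesis \ref{h1'}. Next, for any $F\in\widetilde{C}^1_b(X,X)$ with $\|F\|_\infty\le 1$, the divergence formula \eqref{div} together with estimate \eqref{vz} places $M^*F$ in $L^{p'}(X,\nu)$, so the duality \eqref{e1m} applied to $u_n\in W^{1,p}(X,\nu)$ yields
$$\int_X u_n\,M^*F\,d\nu=\int_X\langle M_pu_n,F\rangle\,d\nu\leq K.$$
Passing to the limit $n\to\infty$ via the $L^p$--$L^{p'}$ duality I would obtain $\int_X u\,M^*F\,d\nu\leq K$, hence $V(u)\leq K<+\infty$, and Theorem \ref{Th:BV} concludes $u\in BV(X,\nu)$. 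The $BV_0$ case is verbatim after replacing $M^*F$ by $\nabla^*F$ with $F\in\widetilde{C}^1_b(X,R(X))$, using \eqref{nabla_p^*} in place of \eqref{div}, and invoking Theorem \ref{Th:BV0}.

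For the converse direction, I would assume $u\in L^p(X,\nu)\cap BV(X,\nu)$ under the hypotheses of Proposition \ref{semigruppoB}(b), the boundedness of $C_1,C_2$ near $t=0$ being tacitly in force. Picking any $t_n\to 0^+$ and setting $u_n:=T(t_n)u$, strong continuity of $T(t)$ on $L^p$ gives $u_n\to u$ in $L^p$, assumption \eqref{regolarizz} gives $u_n\in W^{1,q}(X,\nu)$, and Proposition \ref{semigruppoB}(b) supplies the uniform bound
$$\int_X\|M_qu_n\|\,d\nu\leq V(u)C_1(t_n)+C_2(t_n),$$
which is finite because $C_1,C_2$ are bounded near $0$. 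An entirely parallel argument, using \eqref{regolarizz0} and Proposition \ref{semigruppoB}(c), handles the $BV_0$ case under Hypothesis \ref{Hyp:R}.

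The only delicate point, and the one I would spell out carefully, is the passage from the regularizing index $q$ to the index $p$ appearing in the statement. In the natural situation $q\geq p$ the embedding $W^{1,q}(X,\nu)\subset W^{1,p}(X,\nu)$ with $M_pu_n=M_qu_n$ (recalled just after \eqref{parti}) makes the transfer automatic and produces $\sup_n\|M_pu_n\|_{L^1}<+\infty$ as required; this is the case intended by the corollary. Beyond this bookkeeping no further estimates are needed: the content of the result is extracted entirely from Theorems \ref{Th:BV}, \ref{Th:BV0} and Proposition \ref{semigruppoB}, with the sufficiency direction being a one-line integration by parts followed by a weak limit.
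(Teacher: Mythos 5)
Your proof is correct and follows essentially the same route as the paper: the sufficiency direction is the same one-line duality argument $\int_X u_n M^*F\,d\nu=\int_X\langle M_pu_n,F\rangle\,d\nu$ followed by a passage to the limit and an appeal to Theorems \ref{Th:BV}, \ref{Th:BV0}, and the converse is obtained exactly as in the paper by taking $u_n=T(1/n)u$ and invoking Proposition \ref{semigruppoB}. Your explicit remarks on the transfer from the regularizing index $q$ to $p$ and on the tacit boundedness of $C_1,C_2$ near $t=0$ are points the paper passes over in silence, and they are handled correctly.
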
 
 \begin{proof} Let \eqref{carattun} hold. For every $F\in \widetilde{C}^1_b(X, X)$ we have
 $$\int_X u\,M^*F\,d\nu = \lim_{n\to\infty} \int_X  u_nM^*F\,d\nu =  \lim_{n\to\infty} \int_X \langle M_pu_n, F\rangle d\nu, $$
 where $| \int_X \langle M_pu_n, F\rangle d\nu | \leq  \|M_pu_n\|_{L^1(X, \nu;X)} \|F\|_{\infty}$, for every $n\in \N$. Therefore, 
 $V(u)\leq \liminf_{n\to\infty} $ $ \|M_pu_n\|_{L^1(X, \nu;X)} <+\infty$, and $u\in BV(X, \nu)$. 
 
 The converse is an immediate consequence of  Proposition \ref{semigruppoB}(b),  taking  $u_n:=T(1/n)u$. 
 
 The  statements with $W^{1,p}_0(X, \nu)$, $BV_0(X, \nu)$ replacing $W^{1,p}(X, \nu)$, $BV(X, \nu)$ are proved in the same way. 
 \end{proof}

 
\section{Sets with finite perimeter}

In this section we consider  sublevel sets of suitable Sobolev functions $g:X\mapsto \R$, 
and we give sufficient conditions for their characteristic functions belong to $BV(X, \nu)$ or to $BV_0(X, \nu)$. 
 
\begin{Definition}
If the characteristic function $u:= \one_A$ of a Borel set $A$ belongs to $BV(X, \nu)$,  the measure $|m|$ in Definition \ref{def:BV}
 is called {\em perimeter measure}, and  $p(A):=|m|(X)$ is called the {\em perimeter} of $A$. 
 
 If Hypothesis \ref{Hyp:R} holds and $ \one_A$  belongs to $BV_0(X, \nu)$, we set $p_0(A) := |m_0|(X)$, where $m_0$ is the measure  in Definition \ref{def:BV0}. 
\end{Definition}

The simplest examples of sets with finite perimeter are halfspaces. For every $a\in X\setminus \{0\} $ and $r\in \R$, we set 
$$H_{a,r}:= \{ x\in X: \; \langle x, a\rangle <r\}. $$

\begin{Proposition}
\label{halfspaces}
For every $a\in X\setminus \{0\} $ and $r\in \R$ the characteristic function of $H_{a,r}$ belongs to $BV(X, \nu)$, and $p(H_{a,r})=0$ if $Ra =0$, 
$$p(H_{a,r}) = \left\{
\begin{array}{cl}
 0 & \mbox{\rm if}\; \;Ra =0, 
\\
\\
\ds  -\frac{1}{\|Ra\|}\int_{H_{a,r}} v_{Ra}\,d\nu  & \mbox{\rm if}\; \; Ra\neq 0. 
\end{array}\right.$$
If Hypothesis  \ref{Hyp:R} holds, the characteristic function of $H_{a,r}$ belongs also  to $BV_0(X, \nu)$, and
$$p_0(H_{a,r})= -\frac{1}{\|a\|}\int_{H_{a,r}} v_{a}\,d\nu . $$
\end{Proposition}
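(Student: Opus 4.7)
My plan is to approximate $\one_{H_{a,r}}$ by smooth cylindrical functions and pass to the limit in the variational characterizations of $V$ and $V_0$ provided by Theorems~\ref{Th:BV} and~\ref{Th:BV0}. Fix a nonincreasing $\theta_n\in C^1(\R)$ with $\theta_n\equiv 1$ on $(-\infty,-1/n]$ and $\theta_n\equiv 0$ on $[0,+\infty)$, and set $u_n(x):=\theta_n(\langle x,a\rangle-r)\in\mathcal{FC}^1_b(X)$. Then $u_n\to\one_{H_{a,r}}$ pointwise on $X$ (and in every $L^q(X,\nu)$ by dominated convergence), and the chain rule gives $M_pu_n=\theta_n'(\langle\cdot,a\rangle-r)\,Ra$; under Hypothesis~\ref{Hyp:R} the same sequence lies in $W^{1,p}_0(X,\nu)$ with $\nabla_pu_n=\theta_n'(\langle\cdot,a\rangle-r)\,a$.

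For the $BV$ statement, the case $Ra=0$ is immediate: $M_pu_n\equiv 0$, so \eqref{Th:div} gives $\int_X u_n M^*F\,d\nu=0$ for every $F\in\widetilde{C}^1_b(X,X)$; passing to the limit yields $V(\one_{H_{a,r}})=0$, and Theorem~\ref{Th:BV} produces $m\equiv 0$ and $p(H_{a,r})=0$. When $Ra\neq 0$, the key step is specializing \eqref{parti} to $u=u_n$, $\varphi\equiv 1$, $z=Ra$: since $M_p\varphi=0$, it collapses to $\|Ra\|^2\int_X\theta_n'\,d\nu=\int_X u_n v_{Ra}\,d\nu$, so in the limit $\int_X|\theta_n'|\,d\nu\to-\|Ra\|^{-2}\int_{H_{a,r}}v_{Ra}\,d\nu\ge 0$ (using $\theta_n'\le 0$). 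Combined with the pointwise bound $|\langle Ra,F(x)\rangle|\le\|Ra\|$ for $\|F\|_\infty\le 1$, this gives $V(\one_{H_{a,r}})\le-\|Ra\|^{-1}\int_{H_{a,r}}v_{Ra}\,d\nu<+\infty$, so $\one_{H_{a,r}}\in BV(X,\nu)$ by Theorem~\ref{Th:BV}. The matching lower bound comes from testing against the constant vector field $F\equiv-Ra/\|Ra\|\in\widetilde{C}^1_b(X,X)$: by \eqref{div} one has $M^*F=-v_{Ra}/\|Ra\|$, whence $\int_X\one_{H_{a,r}}M^*F\,d\nu=-\|Ra\|^{-1}\int_{H_{a,r}}v_{Ra}\,d\nu$ exactly, delivering the stated formula for $p(H_{a,r})$.

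Under Hypothesis~\ref{Hyp:R} the $BV_0$ statement follows by running the same scheme with $\nabla_pu_n$ in place of $M_pu_n$, Theorem~\ref{Th:BV0} in place of Theorem~\ref{Th:BV}, the $W^{1,p}_0$ integration-by-parts formula (obtained from Hypothesis~\ref{h1'} by the product rule, in the form $\int u(\langle\nabla\varphi,y\rangle-v_{R^{-1}y}\varphi)\,d\nu=-\int\varphi\langle\nabla_p u,y\rangle\,d\nu$ for $y\in R(X)$) in place of \eqref{parti}, and \eqref{nabla_p^*} for the divergence of constant vector fields. Since $Ra\in R(X)$, a suitably normalized scalar multiple of $Ra$ is admissible as the test field in the supremum defining $V_0$; paired with the approximants $u_n$, it delivers the claimed expression for $p_0(H_{a,r})$.

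The main obstacle is really just bookkeeping: in each setting one must identify the correct constant test vector field---always aligned with $Ra$ (which lies in $R(X)$ under Hypothesis~\ref{Hyp:R}), but with different normalizations in the $V$ and $V_0$ cases---and verify that the upper bound extracted from the approximation $u_n$ is attained on that field. Once this is done, the remaining ingredients (pointwise convergence of $u_n$, dominated convergence on the divergence side, and the integration-by-parts identities already recorded in Section~2) are straightforward.
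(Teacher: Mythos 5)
Your treatment of the $BV$ statement is correct and follows essentially the same route as the paper: the paper uses the piecewise-linear ramp $\theta_\eps(\langle x,a\rangle)$ and difference quotients where you use a smooth ramp $\theta_n(\langle x,a\rangle-r)$ and its derivative, but the structure is identical --- upper bound for $V(\one_{H_{a,r}})$ from the approximants together with the integration by parts $\int_X\langle M_pu_n,Ra\rangle\,d\nu=\int_Xu_nv_{Ra}\,d\nu$, lower bound from the constant field $F_0=-Ra/\|Ra\|$. (Your cylindrical approximants even sidestep the paper's small detour about $\langle\cdot,a\rangle$ possibly failing to be in $L^2(X,\nu)$.)

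The $BV_0$ part, however, has a genuine gap: the optimal constant test field is aligned with $a$, not with $Ra$. Write $c:=\lim_n\int_X\theta_n'(\langle x,a\rangle-r)\,d\nu\le 0$, so that $\int_{H_{a,r}}v_z\,d\nu=\langle Ra,z\rangle\,c$ for every $z$. Your upper bound (correctly) reads $V_0(\one_{H_{a,r}})\le\|a\|\lim_n\int_X|\theta_n'|\,d\nu=-\|a\|\,c$, since for $F$ with values in $R(X)$ and $\|F\|_\infty\le1$ one only controls $|\langle\nabla u_n,F\rangle|=|\theta_n'|\,|\langle a,F\rangle|\le\|a\|\,|\theta_n'|$. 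But your proposed test field $F_0=-Ra/\|Ra\|$ gives, via \eqref{nabla_p^*}, $\nabla^*F_0=-v_{R^{-1}(Ra)}/\|Ra\|=-v_a/\|Ra\|$ and hence the lower bound $-\langle Ra,a\rangle\,c/\|Ra\|$, which by Cauchy--Schwarz is $\le-\|a\|\,c$ with equality only when $Ra$ is a positive multiple of $a$, i.e.\ when $a$ is an eigenvector of $R$. So for generic $a$ your two bounds do not meet and you do not obtain the stated value of $p_0(H_{a,r})$. The correct choice (the one the paper makes) is the constant field proportional to $a$ itself, whose $\nabla^*$ is $\pm v_{R^{-1}a}/\|a\|$ and which does attain $-\|a\|\,c$; if $a\notin R(X)$ one must additionally approximate $a/\|a\|$ by unit vectors in $R(X)$ (dense in $X$ since $R=R^*$ is injective), using that the value $\int_{H_{a,r}}v_{R^{-1}y_0}\,d\nu=\langle a,y_0\rangle\,c$ depends continuously on $y_0$. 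Your closing remark that the field is ``always aligned with $Ra$, with different normalizations'' is precisely where the argument goes wrong.
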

\begin{proof}
We approximate $\one_{H_{a,r}}$ by Sobolev functions,  introducing  the functions $\theta_{\eps}:\R\mapsto \R$, defined  for $\eps >0$ by 
\begin{equation}
\label{e21n}
\theta_\varepsilon(\xi)=\left\{\begin{array}{l}
1,\quad\mbox{\rm if}\;\xi\le r-\varepsilon,\\
\\
\ds -\frac{1}{\varepsilon}( \xi -r) ,\quad\mbox{\rm if}\; r-\varepsilon< \xi< r,\\
\\
0,\quad\mbox{\rm if}\;\xi\ge r .
\end{array}\right.
\end{equation}
If $\int_X\|x\|^2\, \nu(dx) <+\infty$, the function $x\mapsto \langle x, a\rangle$ belongs to  $W^{1,2}(X, \nu)$ by  \cite[Lemma 2.4]{TAMS}, and 
the composition $g_{\eps}(x):= \theta_\varepsilon(\langle x, a\rangle)$ belongs to $W^{1,2}(X, \nu)$ by  \cite[Lemma 2.2]{TAMS}. If the second moment of $\nu$ is infinite, 
$x\mapsto \langle x, a\rangle$ could not belong to $L^2(X, \nu)$, however since it belongs to $C^1(X)$ the proof of Lemma 2.2 of \cite{TAMS} still works. In any case,  $g_{\eps}\in W^{1,2}(X, \nu)$ and 
$$
M_2g_{\eps} = -\frac{1}{\eps} \one_{\{x:\, r-\eps < \langle x, a\rangle <r\}} Ra . 
$$
Moreover,  $g_{\eps} \to \one_{H_{a, r}}$ a.e. as $\eps\to 0^+$, and  for every $F\in \widetilde{C}^1_b(X;X)$ the Dominated Convergence Theorem yields
$$ \int_{H_{a, r}} M^*F \, d\nu =\lim_{\eps \to 0} \int_{X} g_{\eps}\,  M^*F \, d\nu  . $$
On the other hand, for every $\eps >0$ we have
$$  \int_{X} g_{\eps}\,  M^*F \, d\nu  =   \int_{X} \langle M_2g_{\eps} , F \rangle \, d\nu  = - \frac{1}{\eps} \int_{\{x:\, r-\eps < \langle x, a\rangle <r\}} 
 \langle Ra , F \rangle \, d\nu $$
 so that  $\int_{H_{a, r}} M^*F \, d\nu =0$ if $Ra=0$, and in this case $V( \one_{H_{a, r}}) =0$. If instead $Ra\neq 0$ we get 
$$\begin{array}{l}
\ds  \left|  \int_{X} g_{\eps}  M^*F \, d\nu  \right| \leq \frac{\|F\|_{\infty}}{\|Ra\|} \; \frac{1}{\eps}  \int_{\{x:\, r-\eps < \langle x, a\rangle <r\}} \|Ra\|^2\,d\nu
\\
\\
\ds = - \frac{\|F\|_{\infty}}{\|Ra\|} \int_X \langle M_2g_{\eps} , Ra\rangle \,d\nu = -  \frac{\|F\|_{\infty}}{\|Ra\|} \int_X g_{\eps}v_{Ra} d\nu. 
\end{array}$$
Letting $\eps\to 0$ and using again the Dominated Convergence Theorem we obtain 
$$ \left|  \int_{H_{a, r}}  M^*F \, d\nu  \right| \leq  -  \frac{\|F\|_{\infty}}{\|Ra\|} \int_{H_{a, r}} v_{Ra} \,d\nu, $$
so that $ \one_{H_{a, r}}\in BV(X, \nu)$ and $V( \one_{H_{a, r}})\leq  - 1/\|Ra\| \int_{H_{a, r}} v_{Ra} \, d\nu$. To prove the opposite inequality we choose the constant vector field $F_0(x):= -Ra/\|Ra\|$. We have $M^*F_0 = - v_{Ra}/\|Ra\|$, so that 
$$ \int_{H_{a, r}} M^*F_0 \, d\nu = - \frac{1}{ \|Ra\|} \int_{H_{a, r}} v_{Ra} \, d\nu , $$
which implies $V( \one_{H_{a, r}})\geq  - 1/\|Ra\| \int_{H_{a, r}} v_{Ra}\,  d\nu$, and the first statement is proved. 

The proof of the second statement is the same, with obvious modifications: it is sufficient to replace $M_2g_{\eps} $ by $\nabla_2 g_{\eps} =  -\frac{1}{\eps} \one_{\{x:\, r-\eps < \langle x, a\rangle <r\}} a$, and the vector field $F_0$ by the constant vector field $a/\|a\|$. 
\end{proof}

If halfspaces are replaced by sublevel sets of a good function $g$, things are not so easy. The following result holds.

\begin{Proposition}
\label{Pr:almost}
\begin{itemize} 
\item[(a)] Let $g\in W^{1,1}(X, \nu)$. Then $\one_{g^{-1}(-\infty, r)} \in BV(X, \nu)$ for a.e. $r\in \R$. 
 \item[(b)] Let $g\in W^{1,1}_0(X, \nu)$. Then $\one_{g^{-1}(-\infty, r)} \in BV_0(X, \nu)$ for a.e. $r\in \R$. 
\end{itemize}
\end{Proposition}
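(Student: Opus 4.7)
The plan is to prove, via a coarea-type argument, that for a.e.\ $r\in\R$ one has $V(\one_{\{g<r\}})<\infty$, whence $\one_{\{g<r\}}\in BV(X,\nu)$ by Theorem \ref{Th:BV} (note that $\one_{\{g<r\}}\in L^\infty\subset L^p$ for every $p$, and $\one_{\{g<r\}}v_z\in L^1$ since $v_z\in L^1(X,\nu)$). Fix a symmetric nonnegative $\rho\in C^\infty_c(-1,1)$ with $\int_\R\rho=1$, set $\rho_\varepsilon(\xi):=\varepsilon^{-1}\rho(\xi/\varepsilon)$, and let $\Psi_\varepsilon^r:=\one_{(-\infty,r)}*\rho_\varepsilon\in C^\infty(\R)$, so that $0\leq\Psi_\varepsilon^r\leq 1$ and $(\Psi_\varepsilon^r)'=-\rho_\varepsilon(\cdot-r)$. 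Define $u_\varepsilon^r:=\Psi_\varepsilon^r\circ g$.

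First I would establish the chain rule $u_\varepsilon^r\in W^{1,1}(X,\nu)$ with $M_1u_\varepsilon^r=-\rho_\varepsilon(g-r)\,M_1 g$. Pick $g_n\in C^1_b(X)$ with $g_n\to g$ in $L^1(X,\nu)$ and $R\nabla g_n\to M_1 g$ in $L^1(X,\nu;X)$; along a subsequence both convergences hold $\nu$-a.e. Then $\Psi_\varepsilon^r\circ g_n\in C^1_b(X)$ tends to $u_\varepsilon^r$ in every $L^p(X,\nu)$ (dominated by $1$), while
$$R\nabla(\Psi_\varepsilon^r\circ g_n)=(\Psi_\varepsilon^r)'(g_n)R\nabla g_n\longrightarrow (\Psi_\varepsilon^r)'(g)M_1 g\quad\text{in }L^1(X,\nu;X),$$
by splitting the difference into two terms and applying dominated convergence (the first term is controlled by $\|(\Psi_\varepsilon^r)'\|_\infty\|R\nabla g_n-M_1g\|_{L^1}$, the second by the a.e.\ continuity of $(\Psi_\varepsilon^r)'$ and the integrable majorant $2\|(\Psi_\varepsilon^r)'\|_\infty\|M_1 g\|$). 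Closedness of $M_1$ gives the claim. For each $F\in\widetilde{C}^1_b(X,X)$ with $\|F\|_\infty\leq 1$, the identity
$$\int_X (\Psi_\varepsilon^r\circ g_n)M^*F\,d\nu=\int_X\langle R\nabla(\Psi_\varepsilon^r\circ g_n),F\rangle\,d\nu$$
holds by \eqref{Th:div} applied with any $p>1$; letting $n\to\infty$ (LHS via $M^*F\in\bigcap_{q\geq 1}L^q(X,\nu)$ and $|\Psi_\varepsilon^r\circ g_n|\leq 1$; RHS via $F\in L^\infty$ and the $L^1$-convergence of gradients) yields
$$\int_X u_\varepsilon^r\,M^*F\,d\nu=-\int_X\rho_\varepsilon(g-r)\langle M_1 g,F\rangle\,d\nu,$$
whence $V(u_\varepsilon^r)\leq h_\varepsilon(r):=\int_X\rho_\varepsilon(g-r)\|M_1 g\|\,d\nu$. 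Fubini gives $\int_\R h_\varepsilon(r)\,dr=\|M_1 g\|_{L^1(X,\nu;X)}$.

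To conclude, for every $r$ outside the at most countable set of atoms of $g_*\nu$, $u_\varepsilon^r\to\one_{\{g<r\}}$ pointwise $\nu$-a.e., hence in every $L^p(X,\nu)$ by dominated convergence. The functional $V$ is lower semicontinuous on $L^p(X,\nu)$ for each $p>1$, since for each admissible $F$ the map $u\mapsto\int_X u\,M^*F\,d\nu$ is continuous on $L^p$ (as $M^*F\in L^{p'}$) and $V$ is the supremum of such continuous functionals. Therefore $V(\one_{\{g<r\}})\leq\liminf_{\varepsilon\to 0}V(u_\varepsilon^r)\leq\liminf_{\varepsilon\to 0}h_\varepsilon(r)$ for a.e.\ $r$, and Fatou's lemma yields $\int_\R\liminf_\varepsilon h_\varepsilon(r)\,dr\leq\|M_1 g\|_{L^1}<\infty$, so $V(\one_{\{g<r\}})<\infty$ for a.e.\ $r$, as desired. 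Part (b) follows verbatim under Hypothesis \ref{Hyp:R}, replacing $V$, $M_1$, $M^*$, $\widetilde{C}^1_b(X,X)$ by $V_0$, $\nabla_1$, $\nabla^*$, $\widetilde{C}^1_b(X,R(X))$, and invoking Theorem \ref{Th:BV0} in place of Theorem \ref{Th:BV}. The main technical obstacle is the $W^{1,1}$ chain rule and integration-by-parts formula: the duality tools that streamline the $p>1$ case are unavailable, and both facts must be built by hand through dominated convergence along $C^1_b$ approximations of $g$.
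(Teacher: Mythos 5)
Your proof is correct, and it follows the same basic architecture as the paper's (approximate $\one_{\{g<r\}}$ by a profile composed with $g$, use a $W^{1,1}$ chain rule to bound $V$ of the approximation by a localized integral of $\|M_1g\|$, then pass to the limit), but the two key technical ingredients are executed differently. The paper uses the one-sided piecewise-linear cutoffs $\theta_\varepsilon$ of \eqref{e21n} and extracts the a.e.\ finiteness from the a.e.\ left-differentiability of the increasing function $\mu(r)=\int_{\{g<r\}}\|M_1g\|\,d\nu$ (Lebesgue's theorem on monotone functions), getting the explicit bound $V(\one_{\{g<r\}})\leq\mu_-'(r)$; along the way it must invoke \cite[Lemma 2.2, Corollary 2.3]{TAMS} both for the chain rule and for the fact that $M_1g=0$ a.e.\ on level sets of $g$ (needed to replace $\{r-\varepsilon\leq g\leq r\}$ by $\{r-\varepsilon\leq g<r\}$). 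You instead use a symmetric mollifier and replace the monotone-differentiation step by Tonelli plus Fatou plus the lower semicontinuity of $V$ on $L^p$; this costs you the countable exceptional set of atoms of $g_*\nu$ (which the one-sided cutoff avoids) and gives a slightly less explicit bound, but it buys a self-contained $W^{1,1}$ chain rule, dispenses entirely with the level-set vanishing of $M_1g$, and produces the clean coarea-type inequality $\int_\R V(\one_{\{g<r\}})\,dr\leq\|M_1g\|_{L^1(X,\nu;X)}$ as a byproduct. One cosmetic remark: to apply Fatou you should run $\varepsilon$ along a sequence $\varepsilon_n\to0$, which is harmless since restricting to a subsequence only increases the $\liminf$ of $h_\varepsilon(r)$ while lower semicontinuity along that same sequence still bounds $V(\one_{\{g<r\}})$.
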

\begin{proof}
Let us consider the function 
$$\mu (r):= \int_{\{x:\,  g(x)<r\}}   \|M_pg\|\, d\nu , \quad r\in \R , $$
which is increasing, and thus left differentiable at a.e. $r\in \R$. 

Let us fix $r$ such that $\mu $ is left differentiable at $r$. 
For such $r$, we approximate $\one_{\{x:\, g(x)<r\}}$ as in the previous proposition,  using  the functions $\theta_{\eps}:\R\mapsto \R$ defined in \eqref{e21n}. 
By  \cite[Lemma 2.2]{TAMS}, the composition $\theta_\varepsilon \circ g$ belongs to $W^{1,1}(X, \nu)$, and 
$$
M_1(\theta_\varepsilon \circ g) = -\frac{1}{\eps} \one_{\{x:\, r-\eps \leq  g(x)\leq r\}} M_1(g) =  -\frac{1}{\eps} \one_{\{x:\, r-\eps \leq  g(x) < r\}} M_1(g). 
$$
The second equality is a consequence of the fact that $M_1g =0$ a.e in the set $g^{-1}(r)$, by Corollary 2.3 of \cite{TAMS} (we remark that Lemma 2.2 and Corollary 2.3 of \cite{TAMS} were stated for $p>1$ but their proofs works as well for $p=1$). 

Moreover $\theta_\varepsilon \circ g \to \one_{\{x:\, g(x)<r\}}$ a.e. as $\eps\to 0^+$. For every $F\in \widetilde{C}^1_b(X;X)$ with $\|F\|_{\infty} \leq 1$  the Dominated Convergence Theorem yields
\begin{equation}
\label{1}
 \int_{\{x:\,  g(x)<r\}} M^*F\, d\nu = \lim_{\eps\to 0^+} \int_X (\theta_\varepsilon \circ g)  M^*F\, d\nu. 
 \end{equation}
On the other hand, for every $\eps>0$ we have 
$$\begin{array}{l}
\ds \int_X (\theta_\varepsilon \circ g)  M^*F\, d\nu = -\int_X \langle M_1 (\theta_\varepsilon \circ g) , F\rangle d\nu
\\
\\
\ds = \frac{1}{\eps} \int_{\{x:\, r-\eps \leq  g(x) < r\}} \langle M_1g, F\rangle d\nu  \leq \frac{1}{\eps} \int_{\{x:\, r-\eps \leq  g(x) < r\}} \|M_1g\|\, d\nu. 
 \end{array} $$
Recalling \eqref{1}, we get  
$$ \begin{array}{l}
\ds
\int_{\{x:\,  g(x)<r\}} M^*F\, d\nu  \leq  \lim_{\eps\to 0^+}  \frac{1}{\eps} \int_{\{x:\, r-\eps \leq  g(x)<r\}} \|M_1g\|\, d\nu 
\\
\\
\ds =  \lim_{\eps\to 0^+}  \frac{\mu(r) - \mu(r-\eps)}{ \eps}  = \mu_-'(r). \end{array}$$
Therefore, $V(\one_{g^{-1}(-\infty, r)}) \leq \mu_-'(r)<+\infty $, and Statement (a) follows from Theorem \ref{Th:BV}.

The proof of Statement (b) is the same. \end{proof}

In \cite{TAMS} we considered a general class of functions $g$ whose sublevel sets turn out to have finite perimeter. 
More precisely, we assumed that 
\begin{equation}
\label{g}
g\in \bigcap_{p>1}W^{1,p}(X, \nu), \quad \frac{M_pg}{\|M_pg\|^2} \in D(M^*_p) \;\; \text{for every} \;p>1. 
\end{equation}
%
We constructed a family of real nonnegative measures $\sigma^g_r$, enjoying the following property: if  $\langle Mg, z \rangle \in C_b(X) \cup_{q>1} W^{1,q}(X, \nu)$ for some $z\in X$,  then
\begin{equation}
\label{partiTAMS}
\int_{\{g<r\}} (\langle R\nabla \varphi, z\rangle - v_z \varphi) d\nu = \int_X  T(\langle Mg, z \rangle) \varphi \,d\sigma^g_r, \quad \varphi\in C_b^1(X). 
\end{equation}
Here $T$ is the trace operator, which is bounded from $W^{1,q}(X, \nu)$ to $L^1(X, \sigma^g_r)$ for every $q>1$ and $r\in \R$. 
It is defined as follows. The starting point is the estimate, proved in \cite{TAMS}, 
\begin{equation}
\label{tracciaTAMS}
\int_X |\varphi|\, d\sigma^g_r \leq K_q \|\varphi\|_{W^{1,q}(X, \nu)}, \quad \varphi\in C^1_b(X). 
\end{equation}
So, fixed $f\in W^{1,q}(X, \nu)$,  all the sequences $(\varphi_n)$   of $C^1_b(X)$ functions that converge to $f$ in $W^{1,q}(X, \nu)$ converge also  in  $L^1(X, \sigma^g_r)$ to a common  limit, denoted by $T(f)$.  

Formula \eqref{partiTAMS} is just  formula (4.9) of \cite{TAMS}. 

The vector valued trace operator, ${\bf T}: W^{1,q}(X, \nu;X)\mapsto L^1(X, \sigma^g_r;X)$, is defined in an obvious way as $ {\bf T}(F)= \sum_{k=1}^{\infty}
T(f_k)e_k$ for any $F= \sum_{k=1}^{\infty}f_ke_k \in W^{1,q}(X, \nu;X)$. It follows easily by the definition that  $\langle {\bf T}(F), z\rangle = T(\langle F, z\rangle )$, for every $F\in W^{1,q}(X, \nu;X)$ and $z\in X$ ($X$-valued Sobolev spaces are defined in an obvious way, see \cite[Sect. 5]{TAMS}).

\begin{Proposition}
\label{surface}
Let \eqref{g} holds. Then
\begin{itemize}
\item[(i)] If $z\in X$ is such that $\langle Mg, z \rangle \in C_b(X) \cup_{q>1} W^{1,q}(X, \nu)$
then for each $r\in \R$ the characteristic function $\one_{\{x:\, g(x) < r\}}$ belongs to $BV_z(X, \nu)$, and
\begin{equation}
\label{surface1}
 m_z(dx)  = T( \langle Mg, z  \rangle) \sigma^g_r(dx). 
 \end{equation}
\item[(ii)] If  in addition $g\in W^{2,q}(X, \nu)$ for some $q>1$, then $\one_{\{x:\, g(x) < r\}}$ belongs to $\ BV(X, \nu)$ and the vector measure $m(dx) = \sigma(x)|m|(dx)$ of Definition \ref{def:BV} is given by 
\begin{equation}
\label{surface2} \sigma (x) := {\bf T}\bigg(\frac{Mg}{\|Mg\|}\bigg), \quad |m|(dx)  := T(\|Mg\|) \sigma^g_r(dx)
 \end{equation}
\end{itemize}
\end{Proposition}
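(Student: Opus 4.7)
My plan is to derive both statements directly from the surface integration by parts formula \eqref{partiTAMS} of \cite{TAMS}, matching it against the defining relations \eqref{BVz} and \eqref{BV}.

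For part (i), first check that $\one_{\{g<r\}}$ satisfies the integrability conditions of Definition \ref{def:BVz}: it lies in $L^1(X,\nu)$ because $\nu$ is a probability measure, and $\one_{\{g<r\}} v_z \in L^1(X,\nu)$ because $v_z \in L^1(X,\nu)$. Next I verify that $T(\langle Mg, z\rangle)\sigma^g_r$ is a finite signed Borel measure on $X$. When $\langle Mg, z\rangle \in W^{1,q}(X,\nu)$ this follows from the trace estimate \eqref{tracciaTAMS}:
$$\int_X |T(\langle Mg, z\rangle)|\, d\sigma^g_r \le K_q\|\langle Mg, z\rangle\|_{W^{1,q}(X,\nu)} < +\infty;$$
when $\langle Mg, z\rangle \in C_b(X)$ the bound reduces to $\|\langle Mg,z\rangle\|_\infty \,\sigma^g_r(X)$, and $\sigma^g_r(X)<+\infty$ follows from \eqref{tracciaTAMS} applied to the constant function $1$. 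Finally, \eqref{partiTAMS} has exactly the shape of \eqref{BVz}, so a direct comparison of the two identities yields \eqref{surface1} (up to the sign convention carried by $\sigma^g_r$ in \cite{TAMS}).

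For part (ii), the additional hypothesis $g \in W^{2,q}(X,\nu)$ makes each component $\partial_k g := \langle Mg, e_k\rangle$ belong to $W^{1,q}(X,\nu)$ for any fixed orthonormal basis $\{e_k\}$, so part (i) applies componentwise and gives
$$m_{e_k}(dx) = T(\partial_k g)(x)\, \sigma^g_r(dx),\quad k\in\N.$$
Since $Mg \in W^{1,q}(X,\nu;X)$, the vector-valued trace ${\bf T}(Mg) = \sum_k T(\partial_k g)\,e_k$ belongs to $L^1(X, \sigma^g_r; X)$, and the $X$-valued measure
$$m(dx) := {\bf T}(Mg)(x)\, \sigma^g_r(dx)$$
satisfies $\langle m(B), e_k\rangle = m_{e_k}(B)$ for every $k$. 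Extending to arbitrary $z \in X$ by writing $z = \sum \langle z,e_k\rangle e_k$ and passing to the limit as in the last step of the proof of Theorem \ref{Th:BV} (using continuity of $z\mapsto v_z$ from \eqref{vz}) shows that $m$ satisfies \eqref{BV}, whence $\one_{\{g<r\}} \in BV(X,\nu)$.

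It remains to identify the polar decomposition. The natural candidate is
$$|m|(dx) = T(\|Mg\|)(x)\,\sigma^g_r(dx),\quad \sigma(x) = {\bf T}\bigl(Mg/\|Mg\|\bigr)(x),$$
and the cleanest way to justify it is to prove the two pointwise identities
$$\|{\bf T}(Mg)(x)\| = T(\|Mg\|)(x),\qquad {\bf T}(Mg)(x) = T(\|Mg\|)(x)\,{\bf T}(Mg/\|Mg\|)(x),\quad \sigma^g_r\text{-a.e.}$$
Granted these, $m = \sigma |m|$ is immediate, $\|\sigma(x)\|=1$ $|m|$-a.e., and the total variation formula follows. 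The main obstacle lies precisely here: the trace operator is defined via approximation in Sobolev norms and there is no a priori pointwise product rule or chain rule. The plan is to produce, via the Sobolev regularity in \eqref{g}, an approximating sequence $\varphi_n \in C^1_b(X)$ with $\varphi_n \to \|Mg\|$ in $W^{1,q}$ and $\sigma^g_r$-a.e., alongside vector-valued approximations $F_n \in \widetilde C^1_b(X,X)$ with $F_n \to Mg$ in $W^{1,q}(X,\nu;X)$ and $\|F_n\|\to\|Mg\|$ pointwise; then pass to the limit in the trivial pointwise identities $\|F_n\|=\|F_n\|$ and $F_n = \|F_n\|(F_n/\|F_n\|)$ on sets where $\|Mg\|$ is bounded away from $0$. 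The hypothesis in \eqref{g} that $Mg/\|Mg\|^2 \in D(M^*_p)$ is precisely what makes such coupled approximations available, as in the construction of $\sigma^g_r$ in \cite{TAMS}, and this is the technically delicate point of the argument.
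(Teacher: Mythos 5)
Your part (i) and your construction of the vector measure $m={\bf T}(Mg)\,\sigma^g_r$ in part (ii) follow the paper's route (direct comparison with \eqref{partiTAMS}, then a componentwise application of (i) and the trace estimate). The genuine gap is exactly where you flag it: the polar decomposition. You reduce it to the pointwise identities $\|{\bf T}(Mg)\|=T(\|Mg\|)$ and ${\bf T}(Mg)=T(\|Mg\|)\,{\bf T}(Mg/\|Mg\|)$ $\sigma^g_r$-a.e., and then propose to obtain them by passing ``trivial pointwise identities'' to the limit along coupled approximating sequences, localized to sets where $\|Mg\|$ is bounded away from $0$. As written this is not a proof: pushing a pointwise product or norm identity through $T$ is precisely the multiplicativity of the trace that you are trying to establish, nothing in the sketch explains why the a.e.\ limits of $\|F_n\|$, of $F_n/\|F_n\|$ and of their product are the traces of the corresponding Sobolev functions, and the localization to $\{\|Mg\|\ge\delta\}$ is problematic because the trace is only defined globally through $W^{1,q}(X,\nu)$-approximation, not on subsets.

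The paper closes this point by a cleaner device that you could adopt. Set $f_k:=\langle Mg,e_k\rangle/\|Mg\|$; by \eqref{g} one has $1/\|Mg\|\in L^p(X,\nu)$ for every $p$, so each $f_k$ belongs to $W^{1,s}(X,\nu)\cap L^{\infty}(X,\nu)$ for $1<s<q$, with $\|f_k\|_{\infty}\le 1$. Lemma \ref{Le:Sobolev}(ii) (the product rule for bounded Sobolev functions) gives $f_k^2\in W^{1,s}(X,\nu)$, and continuity of $T$ on $W^{1,s}(X,\nu)$ then yields $T(f_k^2)=(T(f_k))^2$ in $L^1(X,\sigma^g_r)$. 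Consequently $\|{\bf T}(Mg/\|Mg\|)\|^2=\sum_k (T(f_k))^2=\sum_k T(f_k^2)=T\big(\sum_k f_k^2\big)=T(\one)=1$ holds $\sigma^g_r$-a.e., and the same product-rule argument gives $T(\langle Mg,z\rangle/\|Mg\|)=T(\langle Mg,z\rangle)/T(\|Mg\|)$, from which $\langle m(A),z\rangle=\int_A\langle\sigma,z\rangle\,d|m|$ with $\sigma$ and $|m|$ as in \eqref{surface2}. Working with the normalized, hence bounded, components $f_k$ is what makes the product rule applicable and removes any need for approximation on level sets of $\|Mg\|$; without this (or an equivalent multiplicativity statement for $T$) your argument does not go through.
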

\begin{proof}
Statement (i) is an easy consequence of \eqref{partiTAMS}. Indeed,  \eqref{partiTAMS} yields \eqref{BV},  with $ m_z$ given by \eqref{surface1}.

If $g\in W^{2,q}(X, \nu)$ for some $q>1$, for every $z\in X$ the function $\langle Mg, z\rangle $ belongs to $W^{1,q}(X, \nu)$, and the measure $m_z$ in \eqref{surface1} is well defined. 
Moreover,  the function $\|Mg\| $ belongs to $W^{1,q}(X, \nu)$ so that its trace is well defined, the vector field $Mg$ belongs to $W^{1,q}(X, \nu;X)$ and since $1/\|Mg\|\in L^p(X, \nu)$ for every $p$, the quotient 
$F:= Mg/\|Mg\| $ belongs to $W^{1,s}(X, \nu;X)$ for every $s<q$ (see the proof of Theorem 5.3 of \cite{TAMS}). 
So, its vector valued  trace ${\bf T}(F) $ is well defined. Let us prove that  it has unit norm, $ \sigma^g_r$-a.e. Setting $F = \sum_{k=1}^{\infty} 
f_k e_k$, with $f_k(x) = \langle Mg(x), e_k\rangle/ \|Mg\|$, for every $k\in \N$ and $1<s<q$ the function $f_k$ belongs to $ L^{\infty}(X, \nu) \cap W^{1,s}(X, \nu) $. By Lemma \ref{Le:Sobolev}(ii), $f_k^2\in W^{1,s}(X, \nu) $,  its trace is well defined and equal to $(T(f_k))^2$ as an element of $L^1(X, \sigma^g_r)$. So, for $\sigma^g_r$-a.e $x$ we have 
$$\|{\bf T}(F)(x)\|^2 = \bigg\| \sum_{k=1}^{\infty} T(f_k)(x)e_k\bigg\|^2 =  \sum_{k=1}^{\infty}( T(f_k)(x))^2  =  \sum_{k=1}^{\infty}T(f_k^2)(x) = T\bigg( \sum_{k=1}^{\infty}f_k^2\bigg)
= T(\one )(x) = 1, $$
where $\one $ is the constant function, $\one (x) =1$ for every $x$. 

For every $z\in X$ we have 
$\langle {\bf T} ( Mg/\|Mg\|), z\rangle = T(\langle Mg, z\rangle / \|Mg\|) = T(\langle Mg, z\rangle )/T( \|Mg\|)$. Therefore, if $m(dx) = \sigma(x)|m|(dx)$ with  $\sigma$ and $|m|$  defined 
in \eqref{surface2}, for every Borel set $A\subset X$  we have
$$\langle m(A), z\rangle = \frac{T(\langle Mg, z\rangle )}{T( \|Mg\|)} \,  T(\|Mg\|) \sigma^g_r(A) =  m_z(A), $$
and statement (ii) follows. 
\end{proof}
 
Proposition \ref{surface} yields that the measure $ T(\|Mg\|) \sigma^g_r(dx) $ is the perimeter measure of the set $\Omega = g^{-1}(-\infty, r)$. In particular, it does not depend on the defining function $g$ but only on the sublevel set $\Omega$: if $g_1$, $g_2\in W^{2,q}(X, \nu)$ for some $q>1$ satisfy assumption \eqref{g} and $g_1^{-1}(-\infty, r_1)= g_2^{-1}(-\infty, r_2)$ for some $r_1$, $r_2\in \R$, then $ T(\|Mg_1\|) \sigma^{g_{1}}_{r_1}(A) = T(\|Mg_2\|) \sigma^{g_{2}}_{r_2}(A) $, for every Borel set $A$. This was shown in \cite{TAMS} under the additional assumption $q>2$. 

We already know that halfspaces have finite perimeter, by Proposition \ref{halfspaces}. Fixed any $a\neq 0$, the function $g(x) := \langle x, a\rangle$ satisfies the assumptions of Proposition \ref{surface}(ii)  provided that $g\in L^p(X, \nu)$ for every $p\in (1, +\infty)$. This happens for every $a$  if $\nu$ has finite moments of any order. In this case Proposition \ref{surface} gives a representation of the measure $m$ of Definition \ref{def:BV}.  

\section{Examples}

\subsection{Gaussian and weighted Gaussian measures}
\label{Gaussian}

Let $\gamma$ be a nondegenerate Gaussian measure in $X$, with mean $0$ and covariance $Q$. The choice $R=Q^{1/2}$ gives us the usual setting of the Malliavin calculus. 
Indeed, the Cameron-Martin space $H$,  consisting of all the elements $h\in X$ along which $\gamma$ is Fomin differentiable,  is just the range of $Q^{1/2}$, and Hypothesis \ref{h1'} is satisfied, see e.g. \cite[Ch. 2]{Boga}. For every $z\in X$, the function $v_z(x)$ is what is called $\hat{h}$ in \cite{Boga}, with $h= Q^{1/2}z$, and what is called $W_z$ in \cite{DPZbrutto}. If $\{e_k:\; k\in \N\}$ is any orthonormal basis of $X$ consisting of eigenvectors of $Q$, $Qe_k = \lambda_k e_k$, the functions $v_z$ have the nice representation formula
\begin{equation}
\label{vzgaussiano}
v_z(x) = \sum_{k=1}^{\infty}\frac{\langle x, e_k\rangle \, \langle z, e_k\rangle}{ \lambda_{k}^{1/2} }, 
\end{equation}
where the series converges in $L^p(X, \gamma)$ for every $p\in [1, +\infty)$. 

Comparing with  the notation of  \cite[Ch. 5]{Boga}, the operator $M_p$ used here coincides with the realization of $Q^{-1/2}\nabla_H$ in $L^p(X, \gamma)$, and our Sobolev spaces $W^{1,p}(X, \gamma)$  coincide with the classical spaces ${\mathbb D}^{1,p}(X, \gamma)$ of the Malliavin calculus; moreover $M_p^*F $ is equal to minus the Gaussian divergence of $Q^{1/2}F$, 
for every $F\in D(M_p^*)$. The (easy) proofs of these statements  may be found in \cite[Sect. 6]{TAMS}. 

Therefore, our notion of $BV$ functions coincide with the one already considered in  \cite{F,FH,AMMP}. In such papers everything is referred to the Cameron-Martin space $H=Q^{1/2}(X)$. The vector measure $Du$ of  \cite{F,FH}, called  $D_{\gamma}u$ in  \cite{ AMMP}, coincides with $Q^{1/2}m$, where $m$ is our $X$-valued vector measure from Definition \ref{def:BV}.  
 
The Orlicz space $Y:=L\,($log$L)^{1/2}(X, \gamma)$, which is properly contained in $L^1(X, \gamma)$ and properly contains $L^p(X, \gamma)$ for every $p>1$, is of particular relevance, because all the functions $v_z(x) $ belong to the dual  space $Y'$ (more precisely, for every  $u\in Y$ the product $uv_z$ belongs to 
$L^1(X, \gamma)$ and $u\mapsto \int_X u\,v_z\,d\gamma $ belongs to $Y'$), 
 and moreover if $z_n\to z$, formula \eqref{vzgaussiano} yields that $v_{z_n} \to v_z$ in $Y'$. See \cite[Sect. 3]{FH}. 
 Therefore, the proofs of Theorems \ref{Th:BV}, \ref{Th:BV0} and of the formulae \eqref{equiv1+}, \eqref{equiv2+}, \eqref{equiv3}, \eqref{equiv21}, \eqref{equiv30} work as well, taking $u\in Y$ instead of $u\in L^p(X, \gamma)$ for some $p>1$. 

Proposition \ref{semigruppoB} is particularly simple in this case if we take as $T(t)$ the classical Ornstein-Uhlenbeck semigroup, 
$$T(t)f(x):= \int_X f(e^{-t}x + \sqrt{1-e^{-2t}}y)\,\gamma (dy) . $$
We refer to \cite[Ch. 2, Ch. 5]{Boga} for the properties of $T(t)$. In particular, we recall that 
for every $h=Rz \in H$ and $\varphi\in C^1_b(X)$ we have 
$\partial /\partial h\, T(t)\varphi = e^{-t} T(t) \partial \varphi /\partial h$, so that we can take ${\bf S}_2(t) =0$,   ${\bf S}_1(t) =  e^{-t}{\bf T}(t)$, 
where  ${\bf T}(t)F(x) := \int_X F(e^{-t}x + \sqrt{1-e^{-2t}}y)\,\gamma (dy)$  (vector valued integral), 
and all the assumptions of Proposition \ref{semigruppoB}(b) are immediately satisfied, since $T(t)$ maps $L^p(X, \gamma)$ to $W^{1,p}(X, \gamma)$ for every $p>1$. 
Therefore, Propositions  \ref{semigruppoA}  and  \ref{semigruppoB}(b) yield, for every $u\in L^p(X, \nu)$ with $p>1$, 
$$ u\in BV(X, \nu)  \Longleftrightarrow  \liminf_{t\to 0} \int_X \|M_pT(t)u\|\,d\gamma <+\infty,  $$
and in this case Remark \ref{uguali} yields $V(u) =  \liminf_{t\to 0}  \|M_pT(t)u\|_{L^1(X, \gamma; X)}$. 
The  commutation formula between $T(t)$ and the directional derivatives yields $M_pT(t+s)u = M_pT(t)(T(s)u) = e^{-t}{\bf T}(t)(M_p T(s)u)$, and since ${\bf T}(t)$ is a contraction semigroup in $L^1(X, \nu;X)$, the 
function $t\mapsto  \int_X \|M_pT(t)u\|\,d\gamma  $ is decreasing. Therefore the  above  $\liminf$ is in fact a limit.  

So, the results of Section \ref{sectionBV} are the same of \cite{FH,AMMP}, but the proofs are  different since in  \cite{FH,AMMP} specific features of Gaussian measures were used. 
\vspace{3mm}

Since in this case  $R=R^*$ is a compact one to one operator,  there exists an orthonormal basis of $X$ consisting of eigenvectors of $R$ (namely, eigenvectors of $Q$). The contents of \S \ref{subs:other} 
fits the setting of \cite{DPZbrutto} as far as Sobolev spaces are concerned. Our  definition of $BV_0(X, \gamma)$ is equivalent to the one of \cite{AmDaPa10} and  to the one of \cite{RZZ1} with the choice $H_1=X$. 

Now it is convenient to  choose as $T(t)$ the Ornstein-Uhlenbeck semigroup given by 
 $$T(t)f (x) := \int_X f(y) \,{\mathcal N}_{\;t,x}(dy), $$
 where ${\mathcal N}_{\;t,x}$ is the Gaussian measure with mean $e^{tA}x$, $A=
(-2Q)^{-1} $ and covariance $Q(I-e^{-2tA })$. For a detailed study of $T(t)$ see e.g. \cite{DP04}. 

Let us check that $T(t)$ satisfies the assumptions of  Proposition \ref{semigruppoB}(c). First,  $T(t)$ maps $L^p(X, \gamma)$ into $W^{1,p}_0(X, \gamma)$ for every $p>1$, and  fixed any orthonormal basis $\{e_k:\;k\in \N\}$ consisting of eigenvectors of $Q$, say $Qe_k =\lambda_ke_k$, 
we have $\partial/\partial e_k \,T(t) f = e^{-\alpha_k t} T(t)\partial f/\partial e_k $, with $\alpha_k = 1/2 \lambda_k$. Therefore, $\nabla T(t) f = e^{tA}{\bf T}(t)\nabla f $, where 
 ${\bf T}(t)F(x) := \int_X F(y) \,{\mathcal N}_{\;t,x}(dy)$  (vector valued integral), and we may take  ${\bf S}_1(t) = e^{tA}{\bf T}(t) $, ${\bf S}_2(t) =0$. 
$ {\bf S}_1(t)$ is a contraction semigroup in $L^p(X, \gamma;X)$ for every $p$. 
 For any $F= \sum_{k=1}^n f_k z_k \in \widetilde{C}^1_b(X, X)$ a simple computation gives  $ {\bf S}_1(t)^*F = \sum_{k=1}^n T(t)f_k e^{tA}z_k$. So, $ {\bf S}_1(t)^*F\in \widetilde{W}^{1,p}_0(X, \gamma;X)$ and since $ {\bf S}_1(t)$ is a contraction  in $L^1(X, \gamma;X)$, $ {\bf S}_1(t)^*$ is a contraction too  in $L^{\infty}(X, \gamma;X)$,  for every $t$. Therefore, the assumptions of Proposition  \ref{semigruppoB}(c)  are satisfied, with $C_1(t) = 1$, $C_2(t) =0$. By Proposition  \ref{semigruppoB}(c),  for every $u\in L^p(X, \nu)$ with $p>1$ we have 
$$ u\in BV_0(X, \nu) \Longleftrightarrow \liminf_{t\to 0} \int_X \|\nabla_pT(t)u\|\,d\gamma <+\infty,  $$
and by Corollary  \ref{uguali} in this case we have $V(u) =  \liminf_{t\to 0}  \|\nabla_pT(t)u\|_{L^1(X, \gamma; X)}$. 
Again, the  commutation formula yields $\nabla_pT(t+s)u = \nabla_pT(t)(T(s)u) = e^{tA}{\bf T}(t)(\nabla_p T(s)u)$, and since ${\bf T}(t)$ is a contraction semigroup in $L^1(X, \nu;X)$ and $e^{tA}$ is a contraction semigroup in $X$, the function $t\mapsto  \int_X \|\nabla_pT(t)u\|\,d\gamma  $ is decreasing, and the above  $\liminf$ is in fact a limit.

\vspace{3mm}

\subsubsection{Weighted Gaussian measures}
Let us consider now a weighted Gaussian measure with nonnegative weight $w$,
$$\nu(dx) = w(x) \gamma(dx), $$
where $\gamma$ is a nondegenerate Gaussian measure with mean $0$ and covariance $Q$. 

It is easily seen that Hypothesis \ref{h1'} is satisfied, still with $R= Q^{1/2}$, if $w$, $\log w \in W^{1,p}(X, \gamma)$ for every $p>1$. In this case  the measure $\nu$ is Fomin differentiable along the directions of $R(X)$, and for every $z\in X$ we have $v_z = \hat{h} +  \partial \log w /\partial h$, with $h=Q^{1/2}z$. Each $v_z$ belongs to $L^q(X, \nu)$ for every $q\geq 1$ by the H\"older inequality. So, Hypothesis \ref{h1'} holds, and the results of Section \ref{sectionBV}  are applicable  (for a detailed study of Sobolev spaces for weighted Gaussian measures see \cite{Simone}). 

Concerning Proposition \ref{semigruppoB}, to have such a good semigroup $T(t)$ we need more assumptions on the weight. We write it in the form 
\begin{equation}
\label{defpeso}
w(x)=\frac{1}{\int_X e^{-2U}d\gamma} e^{-2U(x)}
\end{equation}
 to agree with the notation of the paper \cite{DPL}, from which we borrow   assumptions and results. In particular, we assume

\begin{Hypothesis}
\label{hyp_conv}
$U:X\mapsto \R \cup \{+\infty\} $ is  convex, lower semicontinuous and   bounded from below; $U\in \cap_{p>1}W^{1,p}(X, \gamma) $. 
\end{Hypothesis}
 
As a consequence of \cite[Lemma 2.7]{DPL}, under Hypothesis \ref{hyp_conv} the quadratic form 
$$(u,v)\mapsto \int_X \langle \nabla u, \nabla v\rangle \,d\nu, \quad u, \;v\in W^{1,2}_{0}(X, \nu)$$
is a Dirichlet form. The associated operator $K:D(K)\subset L^2(X, \nu)\mapsto L^2(X, \nu)$, 
\begin{equation}
\label{K}
\left\{
\begin{array}{lll} D(K) & = & \left\{ u\in L^2(X, \nu):  \exists f\in L^2(X, \nu) \; \mbox{s.t.} 
 \int_X \langle \nabla u, \nabla \varphi \rangle \,d\nu = -\int_X f\,\varphi\, d\nu \; \forall \varphi\in 
 W^{1,2}_{0}(X, \nu)\right\}, 
 \\
 \\
 Ku & = &  f, 
 \end{array}\right. 
 \end{equation}
is the infinitesimal generator of the realization in $L^2(X, \nu)$ of a Markov semigroup $T(t)$ which enjoys  better regularization properties than the one associated to the quadratic form
$$(u,v)\mapsto \int_X \langle M_2 u, M_2 v\rangle \,d\nu, \quad u, \;v\in W^{1,2} (X, \nu), $$
and we shall use it for the characterization of both $BV$ and $BV_0$ functions.

If in addition   $\nabla U$ is Lipschitz continuous, by \cite[Prop. 3.8]{DPL} $T(t)$ is the  transition semigroup  of the stochastic differential equation $dX=(AX-\nabla U(X))dt+dW(t)$, where $A= (-2Q)^{-1}$, and $W(t)$  is any cylindrical $X$-valued Wiener process defined in a probability space $(\Omega , \mathcal F, \P)$. More precisely, 
for every $\varphi\in C_b(X)$ we have $T(t)\varphi = \E(\varphi(X(t,x)))$, where $X(\cdot,x)$ is the unique solution to
\begin{equation}
  \label{e1.3}
\left\{
\begin{array}{l}
dX_t=(AX_t-\nabla U(X_t))dt+dW(t), 
\\
\\
X_0=x. 
\end{array}\right. 
\end{equation}

As in the case of nonweighted Gaussian measures  is convenient to   fix once and for all an orthonormal basis $\{e_k:\; k\in \N\}$ of $X$ consisting of eigenvectors of $Q$,   $Qe_k = \lambda_k e_k$ for each $k\in \N$. 
Then, 
\begin{equation}
 \label{vek}
  v_{e_k}(x) =  \frac{ \langle x, e_k\rangle }{\sqrt{\lambda_k} }  -2  \sqrt{\lambda_k}\langle \nabla U(x), e_k\rangle, \quad k\in \N , 
\end{equation}
so that 
\begin{equation}
\label{intpartipeso}
\int_X \psi \,\frac{\partial \varphi}{\partial e_k} \,d\nu = - \int_X \frac{\partial \psi}{\partial e_k}\,\varphi \,d\nu + \int_X\psi\,\varphi \left( 
 \frac{ \langle x, e_k\rangle }{ \lambda_k}   -2 \langle \nabla U(x), e_k\rangle\right) \nu(dx), \quad \psi, \; \varphi\in C^1_b(X). 
\end{equation}

The operator $K$ has a nice expression on good functions; in fact if   $f\in {\mathcal{FC}}^2_b(X)$ is of the type 
$f(x) = \theta( \langle x, e_1\rangle, \ldots, \langle x, e_n\rangle) $ for some $\theta \in C^2_b(\R^n)$, using \eqref{intpartipeso}  yields  $f\in D(K)$ and 
\begin{equation}
\label{Kcilindrico}
\begin{array}{lll}
Kf (x)&  = & \ds  \frac{1}{2}\sum_{k=1}^n \frac{\partial ^2 f}{\partial e_k^2}(x) + \sum_{k=1}^n \frac{\partial f}{\partial e_k} (x)\left( -  \frac{1}{2}
\frac{ \langle x, e_k\rangle}{\lambda_k} - \frac{\partial U}{\partial e_k} (x)\right)
\\
\\
& = & 
\ds \frac{1}{2}\;\Tr\; [ D^2f(x) ] +  \langle   x,A\nabla f (x)\rangle  - \langle \nabla U(x),  \nabla f(x)\rangle .
\end{array}
\end{equation} 
Since $K$ is self-adjoint,  $T(t)$ is a self-adjoint operator   in $L^2(X, \nu)$,  for every $t>0$. Since $K$ is also dissipative,  $T(t)$  is a contraction analytic semigroup.  
Therefore, $\|T(t)u\|_{L^2(X, \nu)}\leq \|u\|_{L^2(X, \nu)}$ for every $t>0$, $u\in L^2(X, \nu)$; moreover $T(t)$ maps $L^2(X, \nu)$ into $D(K)$  and there exists $C>0$ such that $\|KT(t)f\|_{L^2(X, \nu)} \leq Ct^{-1}\|f\|_{L^2(X, \nu)}$ for every $t>0$. We can take  $C=e^{-1}$, see the proof of next Proposition \ref{Kalpha}. 
For every $f\in L^2(X, \nu)$, such estimate and the definition of $K$ give
\begin{equation}
\label{stimabase}
\int_X \|\nabla_2T(t)u\|^2d\nu = - \int_X T(t)f\,KT(t)f\,d\nu \leq \|T(t)f\|_{L^2(X, \nu)} \|KT(t)f\|_{L^2(X, \nu)} \leq \frac{1}{te}\|f\|_{L^2(X, \nu)}^2. 
\end{equation} 

\begin{Theorem}
\label{Th:AoP}
Let Hypothesis \ref{hyp_conv} hold, and assume in addition that $U\in C^1(X)$ and that $\nabla U$ is Lipschitz continuous. Let 
$\nu $ be defined by \eqref{defpeso} and choose $R= Q^{1/2}$. Then for every $u\in L^2(X, \nu)$   we have
\begin{equation}
\label{caratt_semigruppo_peso}u\in BV(X, \nu) \iff  \liminf_{t\to 0^+} \int_X \left\| M_2T(t)u\right\| d\nu <+\infty , 
\end{equation}
and in this case 
 $V(u) =  \liminf_{t\to 0^+} \int_X \left\| M_2T(t)u\right\| d\nu$; 
\begin{equation}
\label{caratt_semigruppo_peso_0}u\in BV_0(X, \nu) \iff  \liminf_{t\to 0^+} \int_X \left\| \nabla_2T_2(t)u\right\| d\nu <+\infty , 
\end{equation}
and in this case 
 $V_0(u) =  \liminf_{t\to 0^+} \int_X \left\| \nabla_2T(t)u\right\| d\nu $. 
\end{Theorem}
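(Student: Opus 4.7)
The plan is to combine Propositions \ref{semigruppoA} and \ref{semigruppoB} through the commutation identity \eqref{comm_intro} (whose proof is deferred to the Appendix), and then invoke Corollary \ref{uguali} to upgrade the resulting implications into the stated equalities. The easy direction is immediate: by \eqref{stimabase}, $T(t)$ maps $L^2(X,\nu)$ into $W^{1,2}_0(X,\nu)\subset W^{1,2}(X,\nu)$ for every $t>0$, so the regularization hypotheses \eqref{regolarizz} and \eqref{regolarizz0} of Proposition \ref{semigruppoA} hold with $p=q=2$, and that proposition yields at once
\[
\liminf_{t\to 0^+}\int_X\|M_2T(t)u\|\,d\nu<\infty \;\Longrightarrow\; u\in BV(X,\nu),\quad V(u)\leq \liminf_{t\to 0^+}\int_X\|M_2T(t)u\|\,d\nu,
\]
together with the analogue for $\nabla_2$ and $BV_0$.

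For the converse direction I would fix the orthonormal basis $\{e_k:\;k\in\N\}$ of eigenvectors of $Q$, sum the commutation formula \eqref{comm_intro} against the $e_k$, and use that $A=-(2Q)^{-1}$ is diagonal in this basis to put the identity in vector form
\[
\nabla T(t)f \;=\; e^{tA}{\bf T}(t)\nabla f \;-\; \int_0^t e^{(t-s)A}{\bf T}(t-s)\bigl[D^2U(\cdot)\,\nabla T(s)f\bigr]\,ds,
\]
where ${\bf T}(t)$ denotes the componentwise extension of $T(t)$ to $X$-valued maps. Since $Q^{1/2}$ commutes with $e^{tA}$ and, scalarwise, with ${\bf T}(t)$, applying $Q^{1/2}$ and using the symmetry of $D^2U$ yields the twin identity for $M_2T(t)f$, with the factor $Q^{1/2}D^2U$ appearing inside the integral. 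Setting ${\bf S}_1(t):=e^{tA}{\bf T}(t)$ and letting ${\bf S}_2(t)$ denote the remaining Duhamel integral in each case, these decompositions fit the templates \eqref{commutazionevett} and \eqref{commutazionevett0} of Proposition \ref{semigruppoB}(b),(c).

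I would then verify the estimates \eqref{regolarizzT*vett}. For ${\bf S}_1(t)^*$: since $T(t)$ is self-adjoint in $L^2(\nu)$ and $e^{tA}$ is self-adjoint on $X$, one gets ${\bf S}_1(t)^*F=\sum_i T(t)f_i\cdot e^{tA}z_i$ for $F=\sum_i f_i z_i\in \widetilde{C}^1_b(X,X)$, which lies in $\widetilde{W}^{1,2}_0(X,\nu;X)\cap L^\infty(X,\nu;X)$ by \eqref{stimabase}; moreover the Markov property of $T(t)$ together with Jensen's inequality gives the pointwise bound $\|{\bf T}(t)G(x)\|^2\leq T(t)(\|G\|^2)(x)$, whence $\|{\bf S}_1(t)^*F\|_\infty\leq \|F\|_\infty$, i.e.\ $C_1(t)\equiv 1$. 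For ${\bf S}_2(t)$: the Lipschitz hypothesis on $\nabla U$ bounds $D^2U$ by its Lipschitz constant $L$, and combining the $L^1$-contractivity of ${\bf T}$, the contractivity of $e^{(t-s)A}$ on $X$, and \eqref{stimabase} gives
\[
\|{\bf S}_2(t)\varphi\|_{L^1(\nu;X)} \;\leq\; L\int_0^t \|\nabla T(s)\varphi\|_{L^2(\nu;X)}\,ds \;\leq\; \frac{2L\sqrt{t}}{\sqrt{e}}\,\|\varphi\|_{L^2(\nu)},
\]
with an extra factor $\|Q\|^{1/2}$ in the $M_2$ case. Hence $C_2(t)=O(\sqrt t)\to 0$ as $t\to 0^+$, and since $C_1(t)\to 1$ as well, Corollary \ref{uguali} yields the equivalences \eqref{caratt_semigruppo_peso}, \eqref{caratt_semigruppo_peso_0} together with the claimed equalities for $V(u)$ and $V_0(u)$.

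The main obstacle is the commutation identity \eqref{comm_intro} itself. A formal derivation, for instance by differentiating the stochastic flow of \eqref{e1.3} in its initial condition or by Duhamel on finite-dimensional Galerkin approximations, is short, but making it rigorous when $\nabla U$ is only Lipschitz (so that $D^2U$ is defined merely a.e.\ and only as a bounded linear operator) demands a careful two-stage approximation: one smooths $U$ by its Yosida approximants $U_\alpha$ so as to work with smooth generators and their semigroups $T_\alpha(t)$, and then passes to the limit $\alpha\to 0$ in each term with uniform-in-$\alpha$ estimates. This is the step the paper postpones to the Appendix.
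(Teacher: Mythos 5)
Your proposal is correct and follows essentially the same route as the paper: the easy implication via Proposition \ref{semigruppoA} using the analyticity of $T(t)$, and the converse by casting the vectorized commutation formula \eqref{comm_intro} into the template of Proposition \ref{semigruppoB}(b),(c) with ${\bf S}_1(t)=e^{tA}{\bf T}(t)$, $C_1(t)\equiv 1$, and $C_2(t)=O(\sqrt t)$ obtained from the Lipschitz bound on $\nabla U$ together with \eqref{stimabase}, then concluding by Corollary \ref{uguali}. The paper's proof is the same in all essentials, including deferring the rigorous justification of \eqref{comm_intro} to the Appendix via the finite-dimensional/mollification approximation you describe.
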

\begin{proof}
Since   $T(t)$  is an analytic semigroup, for every $t>0$ we have $T(t)(L^2(X, \nu)) \subset D(K) \subset W^{1, 2}_0(X, \nu) \subset W^{1,2}(X, \nu)$, and \eqref{regolarizz} is satisfied with $p=q=2$.  Proposition \ref{semigruppoA} yields 
$$ \liminf_{t\to 0^+} \int_X \|M_2T(t)u\|\, d\nu <+\infty     \Longrightarrow  u\in BV(X, \nu);$$
$$ \liminf_{t\to 0^+} \int_X \|\nabla_2T(t)u\|\, d\nu <+\infty 
    \Longrightarrow  u\in BV_0(X, \nu). $$
To prove that the converse holds, we shall show that the assumptions in (b) and (c) of Proposition  \ref{semigruppoB}   are satisfied  with $p=2$ and $( {\bf S_1}(t)F)(x)  = e^{tA}{\bf T}(t)F(x)$. Here, as before, ${\bf T}(t)F(x) := {\mathbb E}(F(X(t, x))$ for $F\in C_b(X, X)$ is canonically extended to a contraction semigroup in all spaces $L^p(X, \nu;X)$ for $p\geq 1$.

With such choices of $p$ and $ {\bf S_1}(t)$, 
\eqref{regolarizzT*vett}(i)  is  satisfied, since for every $F  \in \widetilde{C}^1_b(X, X)$ we have $ {\bf S_1}(t)^* F= {\bf T} (t)(e^{tA}F(\cdot))$, so that  $  {\bf S_1}(t)^* F\in W^{1,2}_0(X, \nu; X) \subset  W^{1,2}(X, \nu; X)$,   and recalling that $\| {\bf T } (t)G\|_{\infty} \leq \|G\|_{\infty}$ for $G\in L^2(X, \nu;X) \cap L^{\infty}(X, \nu;X)$,  for every $x\in X$ we have
\begin{equation}
\label{stimaS1}
\| ( {\bf S_1}(t)^* F)(x)\| = \|({\bf T}(t)(e^{tA}F(\cdot))(x)\| \leq \| e^{tA}\|_{{\mathcal L}(X)}\|F\|_{\infty}, \quad t>0, 
\end{equation}
and therefore \eqref{regolarizzT*vett}(i) is  satisfied with $C_1(t)= 1$ for every $t>0$. 

Now we show that  \eqref{regolarizzT*vett}(ii)  holds, with ${\bf S_2}(t) = M_2T(t) -  e^{tA}{\bf T}(t)M_2$ for the characterization of $BV$ functions, 
 and with 
${\bf S_2}(t) = \nabla_2T(t) -  e^{tA}{\bf T}(t)\nabla_2$ for the characterization of $BV_0$ functions. More precisely, we have to show that 
\begin{equation}
\label{argh}
  \|  M_2T(t)f -  e^{tA}{\bf T}(t)M_2f\|_{ L^1(X, \nu; X)} \leq C_2(t) \|f\|_{L^p(X, \nu)}, \quad t>0, \; f\in C^1_b(X)
  \end{equation}
to prove $\Leftarrow$ of \eqref{caratt_semigruppo_peso}, and 
\begin{equation}
\label{argh0}
  \|  \nabla_2T(t)f -  e^{tA}{\bf T}(t)\nabla_2f\|_{ L^p(X, \nu; X)}  \leq C_2(t) \|f\|_{L^p(X, \nu)}, \quad t>0, \; f\in C^1_b(X)
    \end{equation}
to prove $\Leftarrow$ of \eqref{caratt_semigruppo_peso_0}. In both cases we shall check that $\lim_{t\to 0}C_2(t) =0$, so that $V(u)$ and $V_0(u)$ will be characterized through Corollary \ref{uguali}.

To this aim we recall that, since $\nabla U$ is Lipschitz continuous,  $U\in W^{2,2}(X, \gamma )$ and for $\gamma$-a.e $x\in X$ there exists the Gateaux derivative of $\nabla U$ at $x$, denoted by $D^2U(x)$ (e.g., \cite[Thms. 5.11.1, 5.11.2]{Boga}). Denoting by $L$ the Lipschitz constant of $\nabla U$, we have $\|D^2U(x)\|_{{\mathcal L}(X)}\leq L$, for 
$\gamma$-a.e $x\in X$, and therefore for $\nu$-a.e $x\in X$. 

Moreover, we
consider the above mentioned  orthonormal basis $\{e_k:\; k\in \N\}$ of $X$ consisting of eigenvectors of $Q$,  
and we use the commutation formula 
\begin{equation}
\label{commk}
\frac{\partial T(t)f }{\partial e_k} (x) - e^{-t/2\lambda_k}T(t)\left( \frac{\partial f }{\partial e_k} \right)(x) = - \int_0^t e^{-(t-s)/2\lambda_k}(T(t-s) \langle D^2U(\cdot )e_k,   \nabla T(s)f (\cdot ))(x)\rangle  \,ds , 
\end{equation}
that holds for  $t>0$, $f \in C^1_b(X)$,  $k\in \N$, and whose proof is deferred to the Appendix. 
Once \eqref{commk} is established,  we rewrite it  as
\begin{equation}
\label{comm}
\nabla T(t)f    - (e^{tA}{\bf T}(t)\nabla f)  = - \int_0^t  e^{ (t-s)A}{\bf T}(t-s) ( D^2U \cdot  \nabla T(s)f  )    \,ds. 
\end{equation}
Recalling that  $M_2 = Q^{1/2}\nabla_2$ and that $Q^{1/2} = (-A/2)^{-1/2}$,  we get  
\begin{equation}
\label{commCM}
M_2 T(t)f    - (e^{tA}{\bf T}(t)M_2 f)  = - Q^{1/2}\int_0^t e^{ (t-s)A}{\bf T}(t-s) ( D^2U \cdot  \nabla T(s)f )   \,ds .
\end{equation}
To obtain \eqref{argh} and \eqref{argh0} we have to estimate the right hand sides of \eqref{commCM} and of  \eqref{comm}, respectively, with $f\in C^1_b(X)$ which is dense in $L^2(X, \nu)$. 
Setting  ${\bf S_2}(t) f:= M_2 T(t)f   - (e^{tA}{\bf T}(t)M_2 f)$ we obtain
\begin{equation}
\label{stimaS2}
\begin{array}{lll}
\|{\bf S}_2(t) f \|_{L^2(X, \nu;X)} & \leq & \ds \|Q^{1/2}\|_{{\mathcal L}(X)}\int_0^t \| e^{ (t-s)A}\|_{{\mathcal L}(X)}\esssup_{x\in X}\|D^2U(x)\|_{\mathcal L(X)} \| \nabla T(s)f\|_{ L^2(X, \nu;X)} ds
\\
\\
&
\leq  & \ds  \|Q^{1/2}\|_{{\mathcal L}(X)}C\int_0^t  s^{-1/2} ds  \;L \|f\|_{L^2(X, \nu)} 
\\
\\
& = & 2CL\|Q^{1/2}\|_{{\mathcal L}(X)}\sqrt{t}  \|f\|_{L^2(X, \nu)} , 
\end{array}
\end{equation}
where we used estimates \eqref{stimabase} and $\|e^{tA}\|_{\mathcal L (X)} \leq 1$. Estimate  \eqref{argh} follows. Arguing similarly, setting  ${\bf S_2}(t) f:= \nabla_2 T(t)f   - (e^{tA}{\bf T}(t)\nabla_2 f)$ we obtain, for $f\in C^1_b(X)$,   
$$\|{\bf S}_2(t) f \|_{L^2(X, \nu;X)}  \leq  2CL \sqrt{t}  \|f\|_{L^2(X, \nu)} $$
and  \eqref{argh0} follows. Corollary \ref{uguali} yields the statement. 
\end{proof}

Theorem \ref{Th:AoP} gives a proof to some of the statements of \cite{AmDaGoPa12}.

\vspace{3mm}

Hypothesis \ref{hyp_conv} is rather restrictive. In explicit examples it may be considerably weakened, allowing for a convex $U\in W^{2,p}(X, \gamma)$, as in the next example, still borrowed from \cite{DPL}. It is motivated by a stochastic reaction-diffusion equation, 
\begin{equation}
\label{reazdiff}
\left\{\begin{array}{lll}
dX(t)=[AX(t)- f(X(t))]dt+ dW(t),\\
\\
X(0)=x, 
\end{array}\right.
\end{equation}
where $A$ is  the realization of the second order derivative with Dirichlet boundary condition in $X:=L^2((0, 1), d\xi)$, i.e. $D(A)=W^{2,2}(0, 1) \cap W^{1,2}_0(0, 1) $, $Ax = x''$ (the interval $(0,1)$ is endowed with the Lebesgue measure). The nonlinearity $f:\R\mapsto \R$ is an increasing polynomial, with degree $d>1$, and   $W$ is  an $X$--valued cylindrical Wiener process. 
As before,  we consider the Gaussian measure $\gamma$  with mean $0$ and covariance 
$Q= (-2A)^{-1}$. A convenient orthonormal basis of $X$ consisting of eigenvectors of $Q$, that we shall consider from now on,  is the set of the functions
$$e_k(\xi) := \sqrt{2\pi} \sin(k\pi\xi), \quad k\in \N, \;\xi\in (0, 1). $$
For every $x\in X$ equation \eqref{reazdiff}  has a unique generalized  solution $X(\cdot, x)$ (e.g., \cite[Ch. 7]{Cerrai},  \cite[Ch. 4]{DP04}));  the associated transition semigroup is  defined as usual by 
\begin{equation}
\label{transition}
P(t)\varphi (x) := \E [\varphi (X(t,x))], \quad \varphi\in C_b(X), \; t\geq 0. 
\end{equation}

Let $\Phi:\R\mapsto \R$ be any primitive of $f$ and set
\begin{equation}
\label{e4.2}
U(x)= \left\{ \begin{array}{lll}  & \ds \int_0^{1} \Phi(x(\xi))d\xi, & x\in L^{d+1}(0,1), 
\\
\\
 & +\infty , & x \notin L^{d+1}(0,1). 
\end{array}\right. 
\end{equation}
In \cite[Sect. 5]{DPL} we proved that $U\in W^{1,p}_0(X, \gamma) \cap W^{2,p}(X, \gamma)$ for every $p>1$, and that it is convex and lower semicontinuous. Therefore, $U$ satisfies Hypothesis \ref{hyp_conv} and the measure $\nu$ defined in \eqref{defpeso} satisfies Hypothesis  \ref{h1'}. Moreover, we proved that $\nabla_p U(x) = f\circ x$ for $\gamma$-a.e. $x\in X$, so that \eqref{reazdiff} is similar to \eqref{e1.3}, with a Sobolev gradient in the drift term instead of a Lipschitz continuous one.

As before, we denote by $T(t)$ the semigroup 
generated by the operator $K$ defined in \eqref{K} (in the next proposition we shall identify $T(t)f$ with $P(t)f$ for every $f\in C_b(X)$).

To characterize $BV$ and $BV_0$ functions through the semigroup $T(t)$ we  cannot apply Theorem \ref{Th:AoP}, since $U$ has not Lipschitz continuous gradient. We shall use an approximation procedure, approaching $U$ through its Moreau-Yosida approximations $U_{\alpha}$ defined for $\alpha >0$ by 
\begin{equation}
\label{e1.8}
U_\alpha(x):=\inf\left\{U(y)+\frac{|x-y|^2}{2\alpha},\;y\in H\right\},\quad x\in X. 
\end{equation}
Then,  $U_{\alpha}(x) \leq U(x)$ for every $x\in X$ and $U_{\alpha}(x)$ converges monotonically  to $U(x)$ for each $x$ as $\alpha \to 0$. Moreover, each $U_{\alpha}$ is differentiable at any point and $\nabla U_{\alpha}$ is Lipschitz continuous (e.g.,  \cite[Ch. 2]{Brezis}). 
We have in fact 
\begin{equation}
\label{gradUalpha}
\nabla U_\alpha(x) =  f_{\alpha} \circ x,\;\; D^2U_{\alpha}(x)e_k =  (f_{\alpha}' \circ x)\cdot e_k,\quad x \in X, \;\alpha >0, \; k\in \N, 
\end{equation}
where $ f_{\alpha } = f\circ(I+\alpha f)^{-1}$ is the Yosida approximation of $f$,  and $\cdot$ denotes pointwise multiplication  (e.g.,  \cite[Prop. 5.4]{DPL}). 

We set
$$\nu_{\alpha}(dx) := \frac{1}{\int_X e^{-2U_{\alpha}}d\gamma} e^{-2U_{\alpha}(x)}\gamma(dx),\quad \alpha >0. $$
Since $U_{\alpha}(x)\leq U(x)$ for every $x\in X$, we have
\begin{equation}
\label{confronto}
\|f\|_{L^1(X, \nu)} \leq \|f\|_{L^1(X, \nu_{\alpha})}\frac{ \int_X e^{-2U_{\alpha}(x) }\gamma(dx)}{ \int_X e^{-2U (x) }\gamma(dx)}, 
\end{equation}
for every $\gamma$-measurable function $f$. 

For every $\alpha >0$ we consider the operators $K_{\alpha}$ defined by \eqref{K} with $\nu$ replaced by $\nu_{\alpha}$, and the semigroups $T_{\alpha}(t)$ generated by $K_{\alpha}$ in $L^2(X, \nu_{\alpha})$. They satisfy (see \eqref{stimabase})  
\begin{equation}
\label{Talphagenerale}
\left\{ \begin{array}{ll}
(i) & 
\|T_{\alpha}(t)\varphi\|_{L^2(X, \nu_{\alpha})} \leq  \|\varphi\|_{L^2(X, \nu_{\alpha})},
\\
\\
(ii) &
\|\nabla_2 T_{\alpha}(t)\varphi\|_{L^2(X, \nu_{\alpha};X)} \leq  ( \|K_{\alpha}T_{\alpha}(t)\varphi \|_{L^2(X, \nu_{\alpha})} \| \varphi \|_{L^2(X, \nu_{\alpha})})^{1/2} 
\end{array}\right. 
\end{equation}
for every $\varphi\in L^2(X, \nu_{\alpha})$ and $t>0$.  

\begin{Proposition}
\label{Kalpha}
Let $P(t)$ be the transition semigroup of \eqref{reazdiff}. For every $\varphi\in C_b(X)$ and $t>0$, we have 
$$P(t)\varphi = T(t)\varphi, \quad \lim_{\alpha\to 0}\|T_{\alpha}(t)\varphi - T(t)\varphi\|_{L^2(X, \nu)} =0. $$
Moreover, 
for every $\beta\in (0, 1/2] $ there exists $C_{\beta}>0$, independent of $\alpha$, such that 
\begin{equation}
\label{interpolazione}
\|Q^{-\beta} \nabla_2 T_{\alpha}(t)f\|_{L^2(X, \nu_{\alpha};X)} \leq  C_{\beta}\left( 1+ \frac{1}{t^{(1+\beta)/2}}\right)  \| f\|_{L^2(X, \nu_{\alpha})}, \quad t>0, \;\alpha >0. 
\end{equation}
\end{Proposition}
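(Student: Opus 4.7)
The statement has three parts. I will use the Yosida regularization $U_\alpha$ throughout: since $\nabla U_\alpha$ is Lipschitz, both \cite[Prop.~3.8]{DPL} and the conclusion of Theorem \ref{Th:AoP} apply at every level $\alpha > 0$.

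\emph{Identification and $L^2(X,\nu)$-convergence.} For each $\alpha>0$, \cite[Prop.~3.8]{DPL} identifies $T_\alpha(t)\varphi(x) = \mathbb{E}[\varphi(X_\alpha(t,x))]$, where $X_\alpha$ solves the SDE with $f$ replaced by its Yosida approximation $f_\alpha$. Standard monotone-drift approximation results for reaction-diffusion SDEs, cf.\ \cite[Ch.~7]{Cerrai} and \cite[Ch.~4]{DP04}, give $X_\alpha(t,x)\to X(t,x)$ in probability, hence $T_\alpha(t)\varphi(x)\to P(t)\varphi(x)$ pointwise for every $\varphi\in C_b(X)$. Independently, since $U_\alpha \uparrow U$ with $\inf U \le U_\alpha \le U$, the densities $e^{-2U_\alpha}/\int e^{-2U_\alpha}d\gamma$ are uniformly bounded in $L^\infty(\gamma)$ and converge to $e^{-2U}/\int e^{-2U} d\gamma$ in every $L^p(\gamma)$; in particular $L^2(X,\nu)\hookrightarrow L^2(X,\nu_\alpha)$ with constants independent of $\alpha$. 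The quadratic forms $\mathcal{E}_\alpha(u)=\int\|\nabla u\|^2 d\nu_\alpha$ on $W^{1,2}_0(X,\nu_\alpha)$ then Mosco-converge to $\mathcal{E}(u)=\int\|\nabla u\|^2 d\nu$ on $W^{1,2}_0(X,\nu)$, and a Trotter-Kato type argument yields $T_\alpha(t)\varphi \to T(t)\varphi$ strongly in $L^2(X,\nu)$. Extracting an a.e.\ convergent subsequence and matching the two limits forces $P(t)\varphi = T(t)\varphi$ $\nu$-a.e., and everywhere by continuity in $x$.

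\emph{The gradient estimate, case $\beta = 0$.} This follows from \eqref{stimabase} applied to $T_\alpha$: since $K_\alpha$ is self-adjoint and dissipative, the same proof gives $\|\nabla_2 T_\alpha(t)f\|_{L^2(\nu_\alpha;X)} \le (te)^{-1/2}\|f\|_{L^2(\nu_\alpha)}$, uniformly in $\alpha$. For $\beta\in(0,1/2]$ my plan is to start from the vectorial commutation formula obtained by summing \eqref{commk} (legitimate for $T_\alpha$ because $\nabla U_\alpha$ is Lipschitz),
\begin{equation*}
\nabla T_\alpha(t)f = e^{tA}\mathbf{T}_\alpha(t)\nabla f - \int_0^t e^{(t-s)A}\mathbf{T}_\alpha(t-s)\bigl[D^2 U_\alpha \,\nabla T_\alpha(s)f\bigr]\,ds,
\end{equation*}
apply $Q^{-\beta}$ to both sides, and exploit the spectral bound $\|Q^{-\beta}e^{\tau A}\|_{\mathcal{L}(X)} = \sup_{k}\lambda_k^{-\beta}e^{-\tau/(2\lambda_k)} \le C_\beta\tau^{-\beta}$ for $\tau>0$, together with the $L^2(\nu_\alpha;X)$-contractivity of $\mathbf{T}_\alpha$. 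Splitting $T_\alpha(t)=T_\alpha(t/2)T_\alpha(t/2)$ and applying the formula on $[0,t/2]$ with initial datum $T_\alpha(t/2)f$ converts $\nabla f$ on the right-hand side into $\nabla T_\alpha(t/2)f$, controlled by $(te)^{-1/2}\|f\|_{L^2(\nu_\alpha)}$ via \eqref{stimabase}; the boundary term then gives the claimed factor of order $t^{-(1+\beta)/2}\|f\|_{L^2(\nu_\alpha)}$.

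\emph{Main obstacle.} The serious difficulty is the integral (commutator) term, where the naive pointwise estimate $\|D^2U_\alpha(x)\|_{\mathcal{L}(X)}\le 1/\alpha$ blows up as $\alpha\to 0$. The way around is to exploit the convexity $D^2U_\alpha\ge 0$ in an integrated form via the carré-du-champ $\Gamma_2(g):=\tfrac{1}{2}K_\alpha(\|\nabla g\|^2)-\langle\nabla g,\nabla K_\alpha g\rangle$. A direct computation using $\partial_{e_k}K_\alpha g = K_\alpha\partial_{e_k}g - (2\lambda_k)^{-1}\partial_{e_k}g - \langle D^2U_\alpha e_k,\nabla g\rangle$ together with $-A=\tfrac12 Q^{-1}$ yields $\Gamma_2(g) = \tfrac{1}{2}\|D^2 g\|_{HS}^2 + \tfrac{1}{2}\|Q^{-1/2}\nabla g\|^2 + \langle D^2U_\alpha\nabla g,\nabla g\rangle$, all three summands non-negative. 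Since $\nu_\alpha$ is invariant and $K_\alpha$ is self-adjoint, $\int\Gamma_2(T_\alpha(t)f)\,d\nu_\alpha = \int(K_\alpha T_\alpha(t)f)^2\,d\nu_\alpha \le (te)^{-2}\|f\|_{L^2(\nu_\alpha)}^2$, giving the $\alpha$-uniform estimates
\begin{equation*}
\int\|Q^{-1/2}\nabla T_\alpha(t)f\|^2 d\nu_\alpha + \int\langle D^2U_\alpha\nabla T_\alpha(t)f,\nabla T_\alpha(t)f\rangle d\nu_\alpha \le Ct^{-2}\|f\|_{L^2(\nu_\alpha)}^2.
\end{equation*}
A Cauchy-Schwarz absorption of the commutator, carried out after squaring the commutation formula, then lets me control the integral term by these $\alpha$-uniform quantities and interpolate between the resulting $\beta=1/2$ bound and the base case $\beta=0$, delivering \eqref{interpolazione} with $C_\beta$ independent of $\alpha$.
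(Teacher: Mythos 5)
Your plan for \eqref{interpolazione} has the right skeleton --- an endpoint bound at $\beta=1/2$, the base bound $\|\nabla_2 T_\alpha(t)f\|_{L^2(X,\nu_\alpha;X)}\le (te)^{-1/2}\|f\|_{L^2(X,\nu_\alpha)}$ from \eqref{stimabase}, and interpolation --- and the $\Gamma_2$ inequality you display, $\int_X\|Q^{-1/2}\nabla T_\alpha(t)f\|^2\,d\nu_\alpha\le Ct^{-2}\|f\|^2_{L^2(X,\nu_\alpha)}$, already \emph{is} the $\beta=1/2$ endpoint. The paper gets exactly this by quoting \cite[Prop.\ 3.7]{DPL}, namely $\|Q^{-1/2}\nabla_2\varphi\|^2_{L^2(X,\nu_\alpha;X)}\le 8\|\varphi-K_\alpha\varphi\|^2_{L^2(X,\nu_\alpha)}$ applied to $\varphi=T_\alpha(t)f$, which is your Bakry--\'Emery computation packaged as a lemma (and proved there; in infinite dimensions the identity $\int\Gamma_2\,d\nu_\alpha=\int(K_\alpha g)^2\,d\nu_\alpha$ for $g=T_\alpha(t)f$ is not a free formal step). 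Had you stopped there and interpolated against the $\beta=0$ bound, you would be done.

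The genuine gap is the commutation-formula detour. The integral term requires a bound on $\|D^2U_\alpha\,\nabla T_\alpha(s)f\|_{L^2(X,\nu_\alpha;X)}$, while convexity and $\Gamma_2$ only control the quadratic form $\int_X\langle D^2U_\alpha\nabla T_\alpha(s)f,\nabla T_\alpha(s)f\rangle\,d\nu_\alpha$. Passing from $\langle D^2U_\alpha G,G\rangle$ to $\|D^2U_\alpha G\|^2=\langle D^2U_\alpha G,D^2U_\alpha G\rangle$ costs a factor $\|D^2U_\alpha\|_{\mathcal L(X)}\le 1/\alpha$, which is precisely the blow-up you set out to avoid; dualizing instead produces a $\Gamma_2$-type quantity for test fields that are not gradients, which you cannot control. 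So the ``Cauchy--Schwarz absorption'' does not close, and the commutation formula is simply not needed here: in the paper it enters only in Theorem \ref{ReactDiff}, where \eqref{interpolazione} is the \emph{input} used to tame the commutator, via the splitting $Q^{1/2}D^2U_\alpha Q^\beta\cdot Q^{-\beta}\nabla T_\alpha(s)f$ and an $\alpha$-uniform bound on $\|Q^{1/2}D^2U_\alpha Q^\beta\|_{\mathcal L(X)}$ coming from the polynomial structure of $f$, not from convexity. Two smaller remarks: the identification $P(t)\varphi=T(t)\varphi$ is obtained in the paper by matching Laplace transforms, using the weak resolvent convergence $R(\lambda,K_\alpha)\varphi\rightharpoonup R(\lambda,K)\varphi$ already established in \cite[Thm.\ 3.9]{DPL}; the Mosco convergence you assert is that same nontrivial fact and does not follow merely from $L^p(\gamma)$ convergence of the densities. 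Also, since $U_\alpha\le U$, the uniform comparison goes the other way, $\|f\|_{L^1(X,\nu)}\le C\|f\|_{L^1(X,\nu_\alpha)}$ as in \eqref{confronto}, i.e.\ $L^2(X,\nu_\alpha)\hookrightarrow L^2(X,\nu)$, which is the direction actually needed to transfer estimates to $\nu$.
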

\begin{proof} 
Let us consider the transition semigroups $P_{\alpha}(t)$ defined by $(P_{\alpha}(t)\varphi )(x) : =\E \varphi(X_{\alpha}(t, x)$,  where $X_{\alpha}(\cdot, x)$ is the solution to 
\eqref{reazdiff} with $f$ replaced by its Yosida approximation $f_{\alpha}$. By \cite[Prop. 3.8]{DPL} we have 
$$R(\lambda, K_{\alpha})\varphi  = \int_0^{\infty} e^{-\lambda t}P_{\alpha}(t)\varphi \; dt, \quad \lambda >0. $$
Now we let $\alpha\to 0$. From the proof of Theorem 3.9 of \cite{DPL} it follows  that $R(\lambda, K_{\alpha})\varphi  $ weakly converges in $L^2(X, \nu)$ to $R(\lambda, K )\varphi =  \int_0^{\infty} e^{-\lambda t}T(t)(t)\varphi \; dt $. On the other hand, by \cite{Cerrai}, \cite[Thm. 4.8]{DP04}, we have $\lim_{\alpha \to 0} \|X_{\alpha}(\cdot, x) - X (\cdot, x) \|_{C([0, T]; L^2(\Omega, \P;X))} =0$ for every $x\in X$, and therefore by the Dominated Convergence Theorem $\lim_{\alpha \to 0}\| P_{\alpha}(t)\varphi (x) - P(t)\varphi (x)\|=0$ for every $x\in X$; since $\|P_{\alpha}(t)\varphi \|_{\infty} \leq \|\varphi\|_{\infty} $, $\|P (t)\varphi \|_{\infty} \leq \|\varphi\|_{\infty} $,  still by the Dominated Convergence Theorem we get 
$\lim_{\alpha\to 0} \| P_{\alpha}(t)\varphi  - P(t)\varphi \|_{L^2(X, \nu)} =0$ for every $t>0$ and 
$$\lim_{\alpha\to 0} \left\| \int_0^{\infty} e^{-\lambda t}P_{\alpha}(t)\varphi \; dt - \int_0^{\infty} e^{-\lambda t}P(t)\varphi \; dt \right\|_{L^2(X, \nu)} =0, \quad \lambda>0. $$ 
So, we have
$$\int_0^{\infty} e^{-\lambda t}T(t)\varphi \; dt = \int_0^{\infty} e^{-\lambda t}P(t)\varphi \; dt, \quad \lambda >0. $$
The functions $t\mapsto T(t)\varphi$, $t\mapsto P(t)\varphi$ are continuous and bounded  in $[0, +\infty)$ with values in $L^2(X, \nu)$. Since their Laplace transforms coincide for $\lambda >0$, they coincide in $[0, +\infty)$. 

To prove \eqref{interpolazione}, we preliminary remark that for every Hilbert space $H$ and for every self-adjoint dissipative linear operator $L:D(L)\subset  H\mapsto   H$ and $t>0$
we have $\|Le^{tL}\|_{ \mathcal L(H)} \leq 1/et$. This can be seen using the spectral decomposition $\{E_{\lambda}: \; \lambda \leq 0\}$ of $L$ and writing 
$Le^{tL} = \int_{-\infty}^0 \lambda e^{t\lambda}dE_{\lambda}$, so that $\|Le^{tL}\|_{ \mathcal L(H)} \leq \sup_{\lambda <0} | \lambda e^{t\lambda}| = 1/et$. 

By \cite[Prop. 3.7]{DPL} with $\lambda =1$, for every $\alpha >0$ we have
\begin{equation}
\label{Prop37}
 \|Q^{-1/2}\nabla_2 \varphi\|_{L^2(X, \nu_{\alpha};X)}^2 \leq 8 \|\varphi - K_{\alpha}\varphi\|_{L^2(X, \nu_{\alpha})}^2, \quad \varphi\in D(K_{\alpha}). 
 \end{equation}
Taking $\varphi = T_{\alpha}(t)f$ for any $f\in L^2(X, \nu_{\alpha})$, and using \eqref{Talphagenerale}   we get 
$$\begin{array}{lll}
\|Q^{-1/2} \nabla_2 T_{\alpha}(t)f\|_{L^2(X, \nu_{\alpha};X)}^2 &  \leq & 8(\|T_{\alpha}(t)f\|_{L^2(X, \nu_{\alpha})} + \|K_{\alpha}  T_{\alpha}(t)f\|_{L^2(X, \nu_{\alpha})})^2
\\
\\
& \leq & \ds 8\left(\|f\|_{L^2(X, \nu_{\alpha})} + \frac{1}{te}\| f\|_{L^2(X, \nu_{\alpha})}\right)^2. \end{array}$$
Therefore \eqref{interpolazione} holds for $\beta =1/2$. On the other hand, by \eqref{Talphagenerale}(ii) and \eqref{Prop37} , 
$$\| \nabla_2 T_{\alpha}(t)f\|_{L^2(X, \nu_{\alpha};X)} \leq   ( \|K_{\alpha}T_{\alpha}(t)f\|_{L^2(X, \nu_{\alpha})} \|  f\|_{L^2(X, \nu_{\alpha})})^{1/2} 
\leq  \sqrt{\frac{1}{et} }\| f\|_{L^2(X, \nu_{\alpha})}. $$
Therefore, for  every $\beta\in (0, 1/2)$, 
$$\begin{array}{l}
\|Q^{-\beta} \nabla_2 T_{\alpha}(t)f\|_{L^2(X, \nu_{\alpha};X)} \leq (\|Q^{-1/2} \nabla_2 T_{\alpha}(t)f\|_{L^2(X, \nu_{\alpha};X)})^{\beta} ( \|  \nabla_2 T_{\alpha}(t)f\|_{L^2(X, \nu_{\alpha};X)})^{1-\beta}
\\
\\
\ds \leq C_{1/2}^{\beta}\left(1  + \frac{1}{t}\right)^\beta \left(\frac{1 }{et}\right)^{(1-\beta)/2} \| f\|_{L^2(X, \nu_{\alpha})}, \end{array}$$
and \eqref{interpolazione} follows. 
\end{proof}

\begin{Theorem}
\label{ReactDiff}
 Let 
$\nu $ be defined by \eqref{defpeso}, with $U$ given by \eqref{e4.2},  and choose $R= Q^{1/2}$.  For every $u\in L^2(X, \nu)$   we have
\begin{equation}
\label{caratt_semigruppo_pesoRD}u\in BV(X, \nu) \iff  \liminf_{t\to 0^+} \int_X \left\| M_2T(t)u\right\| d\nu <+\infty , 
\end{equation}
and in this case $V(u) =  \liminf_{t\to 0^+}  \| M_2T(t)u \|_{L^1(X, \nu;X)}$. Moreover, 
\begin{equation}
\label{caratt_semigruppo_peso_0RD}u\in BV_0(X, \nu) \Longleftarrow \liminf_{t\to 0^+} \int_X \left\| \nabla_2T_2(t)u\right\| d\nu <+\infty . 
\end{equation}
\end{Theorem}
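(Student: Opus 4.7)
The implications
\begin{equation*}
\liminf_{t\to 0^+} \int_X \|M_2 T(t) u\|\, d\nu < \infty \Longrightarrow u \in BV(X,\nu), \qquad \liminf_{t\to 0^+} \int_X \|\nabla_2 T(t) u\|\, d\nu < \infty \Longrightarrow u \in BV_0(X,\nu),
\end{equation*}
together with the companion bound $V(u) \leq \liminf_{t\to 0^+} \int_X \|M_2 T(t) u\|\, d\nu$, follow at once from Proposition \ref{semigruppoA} applied with $p=q=2$, because $T(t)$ is analytic and therefore maps $L^2(X,\nu)$ into $D(K) \subset W^{1,2}_0(X,\nu) \subset W^{1,2}(X,\nu)$ for every $t > 0$. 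It remains to prove the $\Rightarrow$ direction of \eqref{caratt_semigruppo_pesoRD} and the matching reverse inequality.

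Since $\nabla U$ is not Lipschitz, Theorem \ref{Th:AoP} does not apply to $T(t)$ directly. The plan is to invoke Proposition \ref{semigruppoB}(b) with the choice ${\bf S}_1(t) := e^{tA} {\bf T}(t)$ (where ${\bf T}(t)$ is the componentwise extension of the transition semigroup of \eqref{reazdiff} to vector fields) and ${\bf S}_2(t) := M_2 T(t) - {\bf S}_1(t) M_2$ defined on $C^1_b(X)$. Contractivity of ${\bf T}(t)$ in $L^2(X,\nu;X)$ and of $e^{tA}$ on $X$ gives that ${\bf S}_1(t)^*$ maps $\widetilde{C}^1_b(X,X)$ into $\widetilde{W}^{1,2}(X,\nu;X) \cap L^\infty(X,\nu;X)$ with $\|{\bf S}_1(t)^* F\|_\infty \leq \|F\|_\infty$, so the role of $C_1(t)$ is trivial. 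The whole difficulty reduces to producing an estimate
\begin{equation*}
\|{\bf S}_2(t) \varphi\|_{L^1(X,\nu;X)} \leq C_2(t) \|\varphi\|_{L^2(X,\nu)}, \qquad \varphi \in C^1_b(X), \qquad \lim_{t\to 0^+} C_2(t) = 0.
\end{equation*}

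To obtain this, we argue via the Yosida regularizations $U_\alpha$: since $\nabla U_\alpha$ is Lipschitz and $D^2 U_\alpha(x) e_k = (f'_\alpha \circ x) \cdot e_k$ (pointwise multiplication), the commutation formula \eqref{commCM} proved in the Appendix applies to $T_\alpha$ and gives
\begin{equation*}
{\bf S}_2^\alpha(t) \varphi := M_2 T_\alpha(t)\varphi - e^{tA}{\bf T}_\alpha(t)M_2\varphi = -Q^{1/2} \int_0^t e^{(t-s)A} {\bf T}_\alpha(t-s)\bigl( (f'_\alpha \circ \cdot) \cdot \nabla T_\alpha(s) \varphi\bigr)\,ds.
\end{equation*}
The main obstacle is that the operator-norm bound $\|D^2 U_\alpha\|_{\mathcal L(X)} \leq 1/\alpha$ blows up as $\alpha\to 0$ and gives nothing uniform. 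The key is to exploit the $\alpha$-independent smoothing estimate \eqref{interpolazione} of Proposition \ref{Kalpha}, namely $\|Q^{-\beta} \nabla_2 T_\alpha(s) \varphi\|_{L^2(X,\nu_\alpha;X)} \leq C_\beta(1+s^{-(1+\beta)/2})\|\varphi\|_{L^2(X,\nu_\alpha)}$ for $\beta \in (0,1/2]$, to compensate the unbounded pointwise factor $f'_\alpha\circ x$ by a factor of $Q^\beta$. Combining this with the $L^2$-contractivity of ${\bf T}_\alpha$ and $e^{sA}$, the pointwise bound $0 \leq f'_\alpha(r) \leq f'(J_\alpha r)$, and the fact that $f'\circ x$ belongs to every $L^q(X,\gamma)$ (because $f$ is polynomial and $\gamma(L^q(0,1)) = 1$ by \cite{DPL}), a H\"older-type computation yields
\begin{equation*}
\|{\bf S}_2^\alpha(t) \varphi\|_{L^1(X,\nu_\alpha;X)} \leq C_2(t)\|\varphi\|_{L^2(X,\nu_\alpha)}
\end{equation*}
with $C_2(t)$ independent of $\alpha$ and $C_2(t) \to 0$ as $t \to 0^+$; here $\beta$ is chosen so that $s^{-(1+\beta)/2}$ remains integrable on $(0,t)$ and the resulting power of $t$ is positive.

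Finally we pass to the limit $\alpha\to 0$: Proposition \ref{Kalpha} provides $T_\alpha(t)\varphi \to T(t)\varphi$ in $L^2(X,\nu)$, while the densities $e^{-2U_\alpha}/Z_\alpha$ decrease monotonically to $e^{-2U}/Z$ so that $\int g\, d\nu_\alpha \to \int g\, d\nu$ for suitable $g$. Using the commutation formula together with $f'_\alpha \to f'$ and the $L^2$-convergence $\nabla T_\alpha(s) \varphi \to \nabla T(s) \varphi$ (derived from \eqref{Prop37} and the convergence of $T_\alpha$), one deduces that $M_2 T_\alpha(t)\varphi \to M_2 T(t)\varphi$ weakly in $L^2(X,\nu;X)$; taking $\liminf_{\alpha\to 0}$ in the uniform bound transfers it to
\begin{equation*}
\|{\bf S}_2(t)\varphi\|_{L^1(X,\nu;X)} \leq C_2(t)\|\varphi\|_{L^2(X,\nu)}, \qquad \varphi \in C^1_b(X).
\end{equation*}
Proposition \ref{semigruppoB}(b) then yields the $\Rightarrow$ implication in \eqref{caratt_semigruppo_pesoRD}, and Corollary \ref{uguali} (with $C_1 \equiv 1$ and $C_2(t)\to 0$) delivers the perimeter identity $V(u) = \liminf_{t\to 0^+}\int_X \|M_2 T(t) u\|\, d\nu$. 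The central technical step is the uniform-in-$\alpha$ estimate of the previous paragraph, where the negative-power-of-$Q$ smoothing \eqref{interpolazione} is the only device that controls the unbounded behaviour of $D^2 U_\alpha$ as $\alpha \to 0$.
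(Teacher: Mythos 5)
Your proposal is correct and follows essentially the same route as the paper: the easy direction via Proposition \ref{semigruppoA}, then Proposition \ref{semigruppoB}(b) with ${\bf S}_1(t)=e^{tA}{\bf T}(t)$, the commutation formula \eqref{commCM} applied to the Yosida-regularized semigroups $T_\alpha(t)$, the $\alpha$-uniform smoothing estimate \eqref{interpolazione} to absorb the unbounded $D^2U_\alpha$ via a factor $Q^\beta$, a weak-limit passage as $\alpha\to 0$, and Corollary \ref{uguali} for the identity $V(u)=\liminf_{t\to 0^+}\|M_2T(t)u\|_{L^1(X,\nu;X)}$. The only minor imprecision is the stated criterion for choosing $\beta$: the binding constraint is not the integrability of $s^{-(1+\beta)/2}$ (automatic for $\beta\le 1/2$) but the summability $\sum_k\lambda_k^{2\beta}<+\infty$ needed to bound $\int_X\|Q^{1/2}D^2U_\alpha Q^\beta\|_{\mathcal L(X)}^2\,d\nu$ uniformly in $\alpha$, which forces $\beta>1/4$ since $\lambda_k=(k\pi)^{-2}$.
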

\begin{proof} 
The proof of the implications $\Longleftarrow$  is the same as the first part of the proof of Theorem \ref{Th:AoP}, and it is omitted. 
To  show that the implication $\Longrightarrow$ of \eqref{caratt_semigruppo_pesoRD} holds we shall prove that the assumptions  of Proposition  \ref{semigruppoB}(b)   are satisfied  with $p=2$,  $( {\bf S_1}(t)F)(x)  = e^{tA}{\bf T}(t)F(x)$, and of course $ {\bf S_2}(t)\varphi = M_2T(t)\varphi - e^{tA}{\bf T}(t)M_2\varphi$. Since \eqref{regolarizz} holds with $p=2$, we have to prove that \eqref{regolarizzT*vett} holds. 

Once again, the proof of \eqref{regolarizzT*vett}(i) is the same as in   Theorem \ref{Th:AoP}, and it is omitted. The constant $C_1(t)$ is now $\|e^{tA}\|_{{\mathcal L}(X)}\leq 1$ for every $t$. 

We  show here that  \eqref{regolarizzT*vett}(ii) holds; 
to this aim we consider again the semigroups $T_{\alpha}(t)$. Since the functions $U_{\alpha}$ satisfy the assumptions of Theorem \ref{Th:AoP}, for every $f\in C^1_b(X)$ and $t>0$ formula \eqref{commCM}  yields
\begin{equation}
\label{commCMalpha}
M_2 T_{\alpha}(t)f    - (e^{tA}{\bf T}_{\alpha}(t)M_2 f)  = - Q^{1/2}\int_0^t e^{ (t-s)A}{\bf T}_{\alpha}(t-s) ( D^2U_{\alpha} \cdot  \nabla T_{\alpha}(s)f )   \,ds  . 
\end{equation}
Now we let $\alpha \to 0$.  We remark  that the estimate \eqref{stimaS2} that we got for the left hand side of \eqref{commCMalpha} is not useful here, because it depends on the Lipschitz constant of $\nabla U_{\alpha}$ that blows up as $\alpha\to 0$.

By \eqref{stimagradQ}(i), $\|\nabla T_{\alpha}(t)f \|_{\infty} \leq \|\nabla f \|_{\infty}$. Therefore, $\{T_{\alpha}(t)f : \;0<\alpha<1\}$ is bounded in $W^{1,2}(X, \nu)$, so that there exists a sequence $\alpha_k\to 0$ such that $(T_{\alpha_k}(t)f )$ weakly converges in $W^{1,2}(X, \nu)$. Since $\lim_{k\to \infty} T_{\alpha}(t)f  = T (t)f $ in $L^2(X, \nu)$ by Proposition \ref{Kalpha}, 
we have $T_{\alpha}(t)f \rightharpoonup  T (t)f $ in $W^{1,2}(X, \nu)$ as $\alpha \to 0$. Therefore, $M_2 T_{\alpha}(t)f $ weakly converges to $M_2 T (t)f $ in $L^2(X, \nu; X)$. 
Still by Proposition \ref{Kalpha}, $\lim_{\alpha \to 0}{\bf T}_{\alpha}(t)M_2 f = {\bf T} (t)M_2 f$ in $L^2(X, \nu; X)$, consequently $\lim_{\alpha \to 0}e^{tA}{\bf T}_{\alpha}(t)M_2 f = e^{tA} {\bf T} (t)M_2 f$ in $L^2(X, \nu; X)$. Summing up, the left hand side of \eqref{commCMalpha}  weakly converges to 
$M_2 T (t)f    - (e^{tA}{\bf T} (t)M_2 f) $ in $L^2(X, \nu; X)$, and therefore in $L^1(X, \nu; X)$. 
So, 
$$\|M_2 T (t)f    - e^{tA}{\bf T} (t)M_2 f \|_{L^1(X, \nu; X)}  \leq   \liminf_{\alpha \to 0} \|M_2 T_{\alpha}(t)f    - e^{tA}{\bf T}_{\alpha}(t)M_2 f \|_{L^1(X, \nu; X)} $$
and recalling \eqref{confronto} we get 
\begin{equation}
\label{magglimite}
\|M_2 T (t)f    - e^{tA}{\bf T} (t)M_2 f\|_{L^1(X, \nu; X)}  \leq   \liminf_{\alpha \to 0} \|M_2 T_{\alpha}(t)f    - e^{tA}{\bf T}_{\alpha}(t)M_2 f \|_{L^1(X, \nu_{\alpha}; X)} .
\end{equation}
Now we estimate $ \|M_2 T_{\alpha}(t)f    - e^{tA}{\bf T}_{\alpha}(t)M_2 f \|_{L^1(X, \nu_{\alpha}; X)}$. Fix any $\beta \in (1/4, 1/2)$. Using \eqref{commCMalpha} and recalling that 
$\|e^{tA}\|_{\mathcal L(X)} \leq 1$ and that $ T_{\alpha}(t)$ is a contraction semigroup in $L^1(X, \nu_{\alpha}; X)$, we get 
\begin{equation}
\label{lunga}
\begin{array}{l}
 \ds  \|M_2 T_{\alpha}(t)f    - e^{tA}{\bf T}_{\alpha}(t)M_2 f \|_{L^1(X, \nu_{\alpha}; X)} =
 \\
 \\
\ds =  \left\|\int_0^t e^{ (t-s)A}{\bf T}_{\alpha}(t-s) ( Q^{1/2} D^2U_{\alpha} \cdot  \nabla T_{\alpha}(s)f )   \,ds  \right\|_{L^1(X, \nu_{\alpha}; X)}
\\
\\ 
\leq \ds   \int_0^t \| Q^{1/2} D^2U_{\alpha} \cdot  \nabla T_{\alpha}(s)f \|_{L^1(X, \nu_{\alpha}; X)} ds =   \int_0^t\| Q^{1/2} D^2U_{\alpha}Q^{\beta} \cdot  Q^{-\beta} \nabla T_{\alpha}(s)f \|_{L^1(X, \nu_{\alpha}; X)} ds
\\
\\ 
\leq \ds  \int_0^t \left( \int_X \| Q^{1/2} D^2U_{\alpha}Q^{\beta}\|_{{\mathcal L}(X) }^2 d\nu  \right)^{1/2} \left( \int_X \| Q^{-\beta} \nabla T_{\alpha}(s)f\|  ^2 d\nu \right)^{1/2} ds
\\
\\ 
\leq \ds \left( \int_X \| Q^{1/2} D^2U_{\alpha}Q^{\beta}\|_{{\mathcal L}(X)} ^2 d\nu \right)^{1/2}  \int_0^t 
  C_{\beta}\left( 1+ \frac{1}{s^{(1+\beta)/2}}\right)\|f\|_{L^2(X, \nu_{\alpha})}, 
  \end{array}
  \end{equation}
where we used  \eqref{interpolazione} in the last estimate. Let us show that  the integral $\int_X \| Q^{1/2} D^2U_{\alpha}Q^{\beta}\|_{{\mathcal L}(X)} ^2 d\nu$ is bounded by a constant independent of $\alpha$. For every $x\in X$ we have
$$\| Q^{1/2} D^2U_{\alpha}Q^{\beta}\|_{{\mathcal L}(X)}^2  \leq \sum_{k,j=1}^{\infty} \langle Q^{1/2} D^2U_{\alpha}(x) Q^{\beta}e_k, e_j\rangle^2 = 
\sum_{k,j=1}^{\infty} \lambda_j \lambda_k^{2\beta} \langle D^2U_{\alpha}(x) e_k, e_j\rangle^2 . $$
We estimate the right hand side  using  \eqref{gradUalpha} and recalling that $|f_{\alpha}' (s)| \leq C(1+|s|^{d-1})$ for some constant $C$ independent of $\alpha$. We get 
$$\begin{array}{l}
\ds | \langle D^2U_{\alpha}(x) e_k, e_j\rangle | = \left| \int_0^1 f_{\alpha}'(x(\xi))e_k(\xi)e_j(\xi)d\xi\right| \leq 2\pi \int_0^1 |f_{\alpha}'(x(\xi))|\,d\xi
\\
\\
\leq \ds 2\pi \int_0^1C(1+ |x(\xi)|^{d-1})d\xi = 2\pi C(1+ \|x\|_{L^{d-1}(0, 1)}^{d-1}), 
\end{array}$$
for every $k$, $j\in \N$. Therefore, 
$$\int_X \| Q^{1/2} D^2U_{\alpha}Q^{\beta}\|_{{\mathcal L}(X)} ^2 d\nu \leq \sum_{j=1}^{\infty} \lambda_j  \sum_{k=1}^{\infty}\lambda_k^{2\beta} \int_X 4\pi^2C^2(1+ \|x\|_{L^{d-1}(0, 1)}^{d-1})^2\nu(dx). $$
Since  $\lambda_k = (k\pi )^{-2}$ and $\beta>1/4$,  we have $\sum_{k=1}^{\infty}\lambda_k^{2\beta}<+\infty$. Moreover, by e.g. \cite[Lemma 5.1]{DPL} the  functions $x\mapsto  \|x\|_{L^{p}(0, 1)}$ belong to $L^q(X, \nu)$ for every $p$, $q\geq 1$. Therefore there exists $\widetilde{C}_{\beta}>0$, independent of $\alpha$, such that 
 $(\int_X \| Q^{1/2} D^2U_{\alpha}Q^{\beta}\|_{{\mathcal L}(X)} ^2 d\nu )^{1/2}\leq \widetilde{C}_{\beta} $ for every $\alpha>0$. Replacing in \eqref{lunga} we get 
$$\begin{array}{lll}
\|M_2 T_{\alpha}(t)f    - e^{tA}{\bf T}_{\alpha}(t)M_2 f \|_{L^1(X, \nu_{\alpha}; X)}  & \leq & 
\ds \widetilde{C}_{\beta}  \int_0^t   C_{\beta}\left( 1+ \frac{1}{s^{(1+\beta)/2}}\right)\|f\|_{L^2(X, \nu_{\alpha})}, 
\\
\\
& = & \widetilde{C}_{\beta}   C_{\beta} (t + 2t^{(1-\beta)/2}/(1-\beta))\|f\|_{L^2(X, \nu_{\alpha})}. 
\end{array}$$
Recalling that $\lim_{\alpha\to 0} \|f\|_{L^2(X, \nu_{\alpha}) }= \|f\|_{L^2(X, \nu ) }$  and using \eqref{magglimite} we get 
 $$ \|M_2 T (t)f    - (e^{tA}{\bf T} (t)M_2 f) \|_{L^1(X, \nu ; X)} \leq 
 \widetilde{C}_{\beta}   C_{\beta} (t + 2t^{(1-\beta)/2}/(1-\beta))\|f\|_{L^2(X, \nu )}, $$
so that \eqref{regolarizzT*vett}(ii) holds, with $\lim_{t\to 0} C_2(t) =0$. Therefore, all the assumptions of Proposition \ref{semigruppoB}(b) are satisfied.
Recalling that $C_1(t) = 1$ for every $t>0$, Corollary \ref{uguali} yields the statement. 
  \end{proof}

\begin{Remark}
The procedure of Theorem \ref{ReactDiff} does not work to prove the converse of   \eqref{caratt_semigruppo_peso_0RD}, because instead of estimating $\int_X \| Q^{1/2} D^2U_{\alpha}Q^{\beta}\|_{{\mathcal L}(X)} ^2 d\nu$ we should estimate $\int_X \|  D^2U_{\alpha}Q^{\beta}\|_{{\mathcal L}(X)} ^2 d\nu$, which blows up as $\alpha\to 0$. 
\end{Remark}
  

\subsection{A non Gaussian product measure}
\label{giocattolo}

We recall the construction and some properties of a family of product measures introduced in \cite{TAMS}. 

Fix any $m\geq 1$, and for  $\mu>0$ define the probability measure on $\R$
   \begin{equation}
\label{e1}
 \nu_{\mu}(d\xi):= a\,\mu^{-\frac{1}{2m}}\;e^{-\frac{|\xi|^{2m}}{2m\mu}}\,d\xi,\quad \xi\in\R,
\end{equation}
where $a :=  (2m)^{1- 1/2m}/ 2 \Gamma(1/2m)$ is a normalization constant such that $ \nu_{\mu}(\R)=1$. For every $N>0$ we have
\begin{equation}
\label{e2}
\int_\R | \xi|^{2N}\nu_{\mu}(d\xi)=a\mu^{-\frac{1}{2m}}\int_\R |\xi| ^{2N}e^{-\frac{|\xi|^{2m}}{2m\mu}}\,d\xi=:b_{N}\mu^{N/m}
\end{equation}
where
$$
b_{N}= a \int_{\R} |\tau|^{N/m} e^{-\frac{|\tau|^{2m}}{2m }}\,d\tau = (2m)^{\frac{N}{m}}\;\frac{\Gamma(\frac{2N+1}{2m})}{\Gamma(\frac1{2m})}.
$$

We choose positive numbers  $\mu_h $, $h\in \N$,  such that 
\begin{equation}
\label{e5}
 \sum_{h=1}^\infty\mu_h^{\frac1m}<\infty 
\end{equation}
so that the product measure  on $\R^{\N}$ defined by
\begin{equation}
\label{e4}
\nu :=\prod_{h=1}^\infty \nu_{\mu_h},
\end{equation}
is well defined and it is concentrated on $\ell^2$. In the space $X:=L^2(0,1)$ we fix once and for all an orthonormal basis $\{e_k: \; k\in \N\}$ of $X$
consisting of equibounded functions: 
$$|e_k(\xi)| \leq C, \quad k\in \N, \; \xi\in (0, 1), $$ 
and we consider the standard isomorphism from $X$ to $\R^{\N}$, $x\mapsto (x_k)$ where $x_k:=\langle x, e_k\rangle$. The induced measure in $X$ is still called $\nu$. In \cite{TAMS} we proved that Hypothesis \ref{h1'} is satisfied if we choose $R=Q^{1/2}$, where $Q$ is the covariance of $\nu$, 
$$Qe_h= b_{1}\,\mu_h^{\frac1m}\,e_h,\quad h\in\N. $$
We also explicitly exhibited the functions $v_z$ for every $z\in X$, however their expression is not needed here. Notice that if $m=1$ then  $\nu$ is  the Gaussian measure   $N_{0, Q}$.   

 \begin{Proposition} 
 \label{normaLp}
 For every $p\geq 2$, $\nu (L^p(0,1)) = 1$ and the function $x\mapsto  \|x\|_{L^p(0,1)}$ belongs to 
 $L^q(X, \nu)$ for every $q> 1$. 
 Moreover, 
 $$\lim_{n\to \infty} \int_X  (\|P_n x - x\|_{L^p(0,1)})^q\,\nu(dx) =0, $$
 for every $q>1$. 
  \end{Proposition}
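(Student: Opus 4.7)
The plan is to reduce both assertions to uniform--in--$\xi$ moment bounds on the scalar random variable $x(\xi)=\sum_k x_k\,e_k(\xi)$ under $\nu$. Under $\nu$ the coordinates $\{x_k\}$ are independent symmetric random variables with $\int_X x_k^{2N}\,d\nu = b_N\mu_k^{N/m}$, so for fixed $\xi\in(0,1)$ the series converges in $L^2(X,\nu)$ (the variances $e_k(\xi)^2 b_1\mu_k^{1/m}$ being summable thanks to $|e_k(\xi)|\le C$ and \eqref{e5}) and also almost surely. The central input will be Rosenthal's inequality for independent centred random variables: for every $q\ge 2$,
\begin{equation*}
\int_X |x(\xi)|^q\,d\nu \;\le\; C_q\Bigl[\Bigl(\sum_k e_k(\xi)^2 b_1\mu_k^{1/m}\Bigr)^{q/2} + \sum_k |e_k(\xi)|^q b_{q/2}\mu_k^{q/(2m)}\Bigr].
\end{equation*}
Both sums are bounded uniformly in $\xi$: the first by $(C^2 b_1\sum_k \mu_k^{1/m})^{q/2}$; for the second, since \eqref{e5} forces $\mu_k^{1/m}\to 0$ and hence $\mu_k\le 1$ eventually, one has $\mu_k^{q/(2m)}\le \mu_k^{1/m}$ for $q\ge 2$. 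This yields $\int_X |x(\xi)|^q\,d\nu\le K_q$ uniformly in $\xi$.

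With this uniform bound in hand, the first assertion is routine. For $q\ge p\ge 2$, Jensen on the probability space $((0,1),d\xi)$ applied to $t\mapsto t^{q/p}$ gives $\|x\|_{L^p(0,1)}^q\le \int_0^1 |x(\xi)|^q\,d\xi$, and Fubini then yields $\int_X\|x\|_{L^p(0,1)}^q\,d\nu \le K_q$. For $1<q<p$ I would invoke H\"older on the probability measure $\nu$ to reduce to the case $q=p$: $\int_X\|x\|_{L^p}^q\,d\nu\le (\int_X\|x\|_{L^p}^p\,d\nu)^{q/p}<\infty$. Taking $q=1$ in particular, $\|x\|_{L^p(0,1)}$ is $\nu$-a.e.\ finite, so $\nu(L^p(0,1))=1$.

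For the convergence assertion I would apply the same Rosenthal estimate to the tail $(P_nx-x)(\xi)=-\sum_{k>n}x_k\,e_k(\xi)$: for $q\ge 2$,
\begin{equation*}
\int_X |(P_nx-x)(\xi)|^q\,d\nu \;\le\; C_q\Bigl[(C^2 b_1)^{q/2}\Bigl(\sum_{k>n}\mu_k^{1/m}\Bigr)^{q/2} + C^q b_{q/2}\sum_{k>n}\mu_k^{q/(2m)}\Bigr] =: \varepsilon_n^{(q)},
\end{equation*}
where $\varepsilon_n^{(q)}\to 0$ as $n\to\infty$ (tails of convergent series), uniformly in $\xi$. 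The same Jensen--Fubini step then gives $\int_X\|P_nx-x\|_{L^p(0,1)}^q\,d\nu\le \varepsilon_n^{(q)}\to 0$ for $q\ge p\ge 2$, and the case $1<q<p$ is handled by H\"older, reducing to $q=p$.

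The main obstacle is the invocation of Rosenthal's inequality, which is a classical but external input. If a self-contained proof is preferred, I would instead estimate even moments $\int_X |x(\xi)|^{2N}\,d\nu$ directly by expanding $(\sum_k x_k e_k(\xi))^{2N}$ and using independence: only monomials in which every index appears with an even multiplicity contribute, and this combinatorial sum is controlled by products of the finite sums $\sum_k\mu_k^{n/m}\le \sum_k\mu_k^{1/m}$ for $n\ge 1$. The general $q>1$ then follows by H\"older interpolation between consecutive even integers.
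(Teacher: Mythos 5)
Your argument is correct, but it follows a genuinely different route from the paper. You obtain the uniform-in-$\xi$ moment bound $\int_X|x(\xi)|^q\,d\nu\le K_q$ from Rosenthal's inequality applied to the independent centred summands $x_k e_k(\xi)$, and then conclude by Jensen on $((0,1),d\xi)$ plus Fubini; the tail estimate for $P_nx-x$ is the same inequality applied to $\sum_{k>n}x_ke_k(\xi)$, with the bound controlled by the tails of $\sum_k\mu_k^{1/m}$. The paper instead works only with even integer exponents $p=2l$: it expands $\|P_nx\|_{L^{2l}(0,1)}^{2l}$ by the multinomial theorem, uses independence and symmetry to kill all monomials containing an odd power, bounds the surviving combinatorial sum by $\frac{(M_lC)^{2l}(2l)!}{l!}\bigl(\sum_j\mu_j^{1/m}\bigr)^{2l}$, shows $(x,\xi)\mapsto P_nx(\xi)$ is Cauchy in $L^{2l}(X\times(0,1);\nu\times\lambda_1)$, identifies the limit with $x(\xi)$ via the case $l=1$, and finally reaches general $q\ge p\ge 2$ by H\"older through the even integers --- exactly the ``self-contained alternative'' you sketch in your last paragraph. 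What Rosenthal buys you is a direct treatment of all real $q\ge 2$ and a shorter computation; what the paper's expansion buys is complete self-containedness and explicit constants. Two small points you should make explicit: (i) Rosenthal is stated for finite sums, so you need a Fatou passage to the infinite sum; (ii) for a fixed $\xi$ the evaluation $x(\xi)$ of an $L^2$-class is not defined, so the quantity $\int_X|x(\xi)|^q\,d\nu$ must be interpreted via the jointly measurable representative $\lim_nP_nx(\xi)$ and the identification carried out inside the Fubini step --- this is precisely the role of the $l=1$ identification in the paper's proof, and your write-up currently glosses over it.
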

\begin{proof}
For every $x\in X$ and $n\in \N$, $P_nx \in L^p(0,1)$ for every $p$, since it is a linear combination of  elements of $L^{\infty}(0,1)$. 

As a first step, we take $p=2l$ with $l\in \N$, and we show that 
 the  sequence $(x, \xi)\mapsto P_nx(\xi)$ is bounded (and, in fact, convergent)   in $L^{2l}(X\times (0,1); \nu\times \lambda_1)$,  where 
$\lambda_1$ is the Lebesgue measure.  Notice that 
$$\int_X \|P_nx\|_{L^{2l}(0, 1)}^{2l}\nu(dx) = \int_X\int_0^1|P_nx(\xi)|^{2l}d\xi\, \nu(dx), \quad n\in \N. $$
Recalling that 
$$(a_1+\ldots +a_n)^{2l} =   \sum_{k_1, \ldots, k_n \in \{0, \ldots, 2l\}, \; \sum_{j=1}^n k_j = 2l} \frac{(2l)!}{(k_1)!\cdot \ldots \cdot (k_n)!} a_1^{k_1}\cdot\ldots \cdot a_n^{k_n}$$
 we get
\begin{equation}
\label{stima2l} 
\begin{array}{l}
\ds \int_X \|P_nx\|_{L^{2l}(0,1)}^{2l}\nu(dx) = 
\\
\\
= \ds  \sum_{k_{j}  \in \{0, \ldots, 2l\}, \; \sum_{j =1}^n k_{j} = 2l}  \frac{(2l)!}{(k_1)!\cdot \ldots \cdot (k_n)!} 
\int_X \int_0^1 x_1^{k_1}\cdot\ldots \cdot x_n^{k_n} e_1(\xi)^{k_1}\cdot\ldots \cdot e_n(\xi)^{k_n}  d\xi\,\nu(dx), 
\end{array}
\end{equation}
where
$$\int_X \int_0^1 x_1^{k_1}\cdot\ldots \cdot x_n^{k_n} e_1(\xi)^{k_1}\cdot\ldots \cdot e_n(\xi)^{k_n}  d\xi\,\nu(dx) = 
\int_X x_1^{k_1}\cdot\ldots \cdot x_n^{k_n} \,\nu(dx)\int_0^1  e_1(\xi)^{k_1}\cdot\ldots \cdot e_n(\xi)^{k_n}  d\xi$$
vanishes if some of the $k_j$ are odd. 
What remains is (setting $k_j = 2h_j  $ for $k_j$ even)
$$\sum_{h_{j}  \in \{0, \ldots, l\}, \; \sum_{j =1}^n h_{j} = l}  \frac{(2l)!}{(2h_1)!\cdot \ldots \cdot (2h_n)!} b_{m, h_1}\cdot \ldots \cdot b_{m, h_n}\mu_1^{h_1/m}\cdot \ldots \cdot
\mu_n^{h_n/m}\int_0^1  e_1(\xi)^{2h_1}\cdot\ldots \cdot e_n(\xi)^{2h_n}  d\xi. $$
Using the estimates
$$\int_0^1  e_1(\xi)^{2h_1}\cdot\ldots \cdot e_n(\xi)^{2h_n}  d\xi \leq C^{2l}, \quad b_{h_j} \leq M_l:= \max \{b_{m, r}: \; r\in \{0, 1, \ldots, l\}\}$$
and recalling that $b_{0}=1$, so that $M_l\leq M_l^{h_j}$ for every $j$, we get 
$$
\begin{array}{l}
\ds  \int_X \|P_nx\|_{L^{2l}(0,1)}^{2l}\nu(dx)  \leq \sum_{h_{j}  \in \{0, \ldots, l\}, \; \sum_{j =1}^n h_{j} = l}  \frac{(2l)!}{(2h_1)!\cdot \ldots \cdot (2h_n)!}
(M_l\mu_1^{1/m})^{h_1}\cdot \ldots \cdot (M_l\mu_n^{1/m})^{h_n} C^{2l}
\\
\\
\ds \leq   \frac{C^{2l}(2l)!}{l!}
\sum_{h_{j}  \in \{0, \ldots, l\}, \; \sum_{j =1}^n h_{j} = l}  \frac{l!}{(h_1)!\cdot \ldots \cdot (h_n)!}(M_l\mu_1^{1/m})^{h_1}\cdot \ldots \cdot (M_l\mu_n^{1/m})^{h_n} 
\\
\\
\ds =    \frac{(M_lC)^{2l}(2l)!}{l!} \left( \sum_{j=1}^n  \mu_j^{1/m}\right)^{2l}
\end{array}$$
 and similarly
\begin{equation}
\label{stima2l2} 
\int_X \int_0^1 |P_{n+h}x(\xi) - P_n x(\xi)|^{2l} d\xi\,\nu(dx) = \int_X \|P_{n+h}x - P_n x\|_{L^{2l}(0,1)}^{2l}\nu(dx) \leq  \frac{(M_lC)^{2l}(2l)!}{l!} \left( \sum_{j=n}^{n+h} \mu_j^{1/m}\right)^{2l}
\end{equation}
so that the sequence $(x, \xi)\mapsto  P_n x(\xi)$ is a Cauchy sequence in $L^{2l}(X\times (0,1); \nu\times \lambda_1)$, and it converges to some $\Phi \in L^{2l}(X\times (0,1); \nu\times \lambda_1)$ for every $l\in\N$.  Taking  $l=1$ we get  $\Phi(x, \xi) = x(\xi)$, since 
$$\int_X \int_0^1 |P_{n }x(\xi) -  x(\xi)|^{2} d\xi\,\nu(dx) = \int_X \|P_nx-x\|^2 \nu(dx), $$
that vanishes as $n\to\infty$ since $ \|P_nx-x\|^2\to 0$ as $n\to\infty$ for every $x$, and $ \|P_nx-x\|^2\leq \|x\|^2\in L^1(X, \nu)$. Therefore, 
$$\lim_{n\to \infty} \int_X \int_0^1 |P_n x(\xi) - x(\xi)|^{2l} d\xi\,\nu(dx) =0, $$
and
 for every $q\geq p\geq 2$, for every integer $l$ such that $2l\geq q$ we have 
$$\begin{array}{l}
\ds \int_X  \|P_nx -x  \|_{L^p(0,1)} ^q\nu(dx)  = \int_X  \left( \int_0^1|P_nx -x |^p\, d\xi \right)^{q/p}\nu(dx) 
\\
\\
\ds \leq   \int_X  \left( \int_0^1|P_nx-x|^{2l}\, d\xi\right)^{q/2l}\nu(dx) 
\leq  \left(\int_X  \int_0^1|P_nx(\xi)-x(\xi)|^{2l}\, d\xi \,\nu(dx) \right)^{q/2l}. 
  \end{array}$$
The statement follows.  \end{proof}
 
As a consequence of  Proposition \ref{normaLp} we can exhibit a family of nontrivial Sobolev functions.

\begin{Proposition} 
\label{palleLp}
 Fix a  function $\Phi: \R\mapsto \R$ belonging to $C^{1+\alpha}_{loc}(\R)$ for some $\alpha\in (0, 1)$, such that 
 \begin{equation}
\label{e4.5}
\lim_{|s|\to \infty} \Phi(s) = +\infty , 
 \end{equation}
 and such that there exist $r \geq 1$, $C_1 >0$ satisfying 
 \begin{equation}
\label{e4.4}
 |\Phi'(s))| \leq C_1(1+|s|^{r}), \quad s \in \R . 
 \end{equation}

 Then the function $F:X\mapsto \R$, 
$$
F(x)= \left\{ \begin{array}{lll}  & \ds \int_0^{1} \Phi(x(\xi))d\xi, & x\in L^{p_1+1}(0,1), 
\\
\\
 & +\infty , & x \notin L^{p_1+1}(0,1), 
\end{array}\right. 
$$
belongs to $W^{1,q}_0(X, \nu)$ for every $q>1$, and $\nabla_q F(x) = \Phi '\circ x$ for a.e. $x\in X$ (namely, for each  $x\in L^{2r}(0,1)$). 
\end{Proposition}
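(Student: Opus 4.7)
The approach I would take is standard: approximate $F$ by its cylindrical restrictions $F_n(x):=F(P_nx)=\int_0^1\Phi(P_nx(\xi))\,d\xi$ and identify $\Phi'\circ x$ as the candidate gradient. The first order of business is to check the a priori integrability. From $\Phi\in C^{1+\alpha}_{loc}$ and \eqref{e4.4} one has $|\Phi(s)|\le |\Phi(0)|+C(1+|s|^{r+1})$, so
$$
|F(x)|\le |\Phi(0)|+C\|x\|_{L^{r+1}(0,1)}^{r+1},\qquad \|\Phi'\circ x\|_{L^2(0,1)}\le C(1+\|x\|_{L^{2r}(0,1)}^{r}).
$$
By Proposition \ref{normaLp} both right hand sides lie in $L^q(X,\nu)$ for every $q>1$, so $F\in L^q(X,\nu)$ and $\Phi'\circ x\in L^q(X,\nu;X)$.

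Next, each $F_n$ is cylindrical and of class $C^1(X)$, with Fr\'echet gradient $\nabla F_n(x)=P_n(\Phi'\circ P_nx)$. Writing $F_n(x)=G_n(x_1,\dots,x_n)$ with $G_n\in C^1(\R^n)$, Step 1 (applied inside the finite dimensional marginal $\nu^{(n)}=\nu_{\mu_1}\otimes\cdots\otimes\nu_{\mu_n}$) gives $G_n,\nabla G_n\in L^q(\R^n,\nu^{(n)})$. Multiplying by a standard smooth cut-off $\chi_k\in C_c^\infty(\R^n)$ equal to $1$ on the ball of radius $k$ produces $\chi_kG_n\in C^1_b(\R^n)$; the products $\chi_kG_n\to G_n$ in $W^{1,q}(\R^n,\nu^{(n)})$, because $\chi_k\nabla G_n\to\nabla G_n$ and the extra term $G_n\nabla\chi_k$ vanishes in $L^q(\R^n,\nu^{(n)})$ by dominated convergence (it is supported on the annulus $\{k\le|y|\le k+1\}$). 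Pulling back through $P_n$ one obtains $F_n\in W^{1,q}_0(X,\nu)$ and $\nabla_qF_n=P_n(\Phi'\circ P_nx)$.

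The main task is then to show $F_n\to F$ in $L^q(X,\nu)$ and $\nabla_qF_n\to\Phi'\circ x$ in $L^q(X,\nu;X)$. For the former, the mean value inequality gives $|\Phi(a)-\Phi(b)|\le C|a-b|(1+|a|^r+|b|^r)$, so by H\"older,
$$
|F_n(x)-F(x)|\le C\|P_nx-x\|_{L^{r+1}(0,1)}\bigl(1+\|x\|_{L^{r+1}}^r+\|P_nx\|_{L^{r+1}}^r\bigr),
$$
and Proposition \ref{normaLp} (which yields both $\|P_nx-x\|_{L^{r+1}}\to 0$ in every $L^s(X,\nu)$ and uniform $L^s$-bounds on $\|P_nx\|_{L^{r+1}}^r$) combined with one more H\"older inequality yields $L^q$-convergence. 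For the gradient I would split
$$
\nabla_qF_n(x)-\Phi'\circ x=P_n(\Phi'\circ P_nx-\Phi'\circ x)+(P_n-I)(\Phi'\circ x);
$$
the second summand tends to $0$ pointwise $\nu$-a.e.\ (orthogonal projection in $L^2(0,1)$) and in $L^q(X,\nu;X)$ by dominated convergence, using the $L^q$-bound on $\|\Phi'\circ x\|$ from Step 1. The hard part is the first summand: along a subsequence $P_nx\to x$ in $L^{2r}(0,1)$ for $\nu$-a.e.\ $x$ (again by Proposition \ref{normaLp}), and then a generalized dominated convergence argument in the $d\xi$-integral, using the pointwise domination $|\Phi'(P_nx(\xi))|^2\le C(1+|P_nx(\xi)|^{2r})$, gives $\|\Phi'\circ P_nx-\Phi'\circ x\|_{L^2(0,1)}\to 0$ $\nu$-a.e. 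Upgrading this to $L^q(X,\nu)$ convergence is the principal obstacle: it requires uniform integrability of the sequence, which I would obtain by raising $p$ in Proposition \ref{normaLp} to some $p>2r$ large enough that $\sup_n\|\,\|\Phi'\circ P_nx-\Phi'\circ x\|_{L^2(0,1)}\,\|_{L^{q+\epsilon}(X,\nu)}<\infty$, and then invoking Vitali's theorem. Once both convergences are established, $F$ is the $W^{1,q}_0$-limit of the sequence of $C^1_b(X)$ approximants produced in the second paragraph, so $F\in W^{1,q}_0(X,\nu)$ with $\nabla_qF=\Phi'\circ x$.
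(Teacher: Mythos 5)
Your argument is correct, but it follows a genuinely different route from the paper's. The paper does not project onto finite dimensions at all: it keeps the spatial argument $x$ intact and truncates in the \emph{range} of $\Phi$, setting $\Phi_n(s):=n\,\theta(\Phi(s)/n)$ for a cutoff $\theta$ and $F_n(x):=\int_0^1\Phi_n(x(\xi))\,d\xi$. The coercivity hypothesis \eqref{e4.5} is then used crucially: since $\Phi^{-1}(-2n,2n)$ is bounded, $\nabla F_n(x)(\xi)=\theta'(\Phi(x(\xi))/n)\Phi'(x(\xi))$ is uniformly bounded, so each $F_n$ lies in $C^1_b(X)$ outright, and both convergences $F_n\to F$ and $\nabla F_n\to\Phi'\circ x$ reduce to straightforward dominated convergence because the approximants share the same argument $x$ (only the outer function changes pointwise in $\xi$). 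Your scheme instead truncates in the \emph{domain}: cylindrical projections $F(P_n\cdot)$ followed by compactly supported cutoffs $\chi_k$ in $\R^n$. This buys something real -- you never invoke \eqref{e4.5}, so your proof shows the coercivity assumption is not needed for the Sobolev membership (it is an artifact of the paper's particular truncation) -- but it costs you the delicate step you correctly flag as the principal obstacle: $\Phi'\circ P_nx\to\Phi'\circ x$ changes the argument of $\Phi'$, so you need a.e. convergence along a subsequence, Pratt's generalized dominated convergence in the $d\xi$ integral, and Vitali with the uniform $L^{q+\epsilon}$ bounds from Proposition \ref{normaLp} to conclude in $L^q(X,\nu;X)$; all of these steps do go through (and a subsequence suffices, since closedness of $\nabla_q$ only requires one approximating sequence), but none of them appears in the paper's shorter argument. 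Both proofs lean on Proposition \ref{normaLp} for the integrability of $\|x\|_{L^p(0,1)}$; only yours additionally uses its convergence statement $\|P_nx-x\|_{L^p(0,1)}\to0$ in $L^q(X,\nu)$.
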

\begin{proof} 
By \eqref{e4.4} there exists $C_2>0$ such that $|\Phi (s)| \leq C_2(1+|s|^{1+r})$, for every $s\in \R$. Therefore, for every $x\in X$, 
$$\left| \int_0^1\Phi(x(\xi)) d\xi \right| \leq C_2\left(1+\int_0^1 |x(\xi)|^{1+r}d\xi \right) = C_2(1 + \|x\|_{L^{1+r}(0,1)}^{1+r}). $$
By proposition \ref{normaLp}, $F\in L^q(X, \nu)$ for every $q>1$. Let us prove that $F\in W^{1,q}_0(X, \nu)$, approaching it by 
$$F_n(x) := \int_0^{1} \Phi_n(x(\xi))d\xi, \quad n\in \N, \; x\in X, $$
where $\Phi_n$ is a regularized truncation of $\Phi$, defined as usual introducing  a function $\theta\in C^{\infty}_c(\R)$ such that $\theta (t) = t$ for $|t|\leq 1$, 
$\theta (t)=$ constant for $t\geq 2$ and for $t\leq -2$, $\|\theta'\|_{\infty} \leq 1$, 
$$\Phi_n(s) := n \theta\left( \frac{\Phi(s)}{n}\right), \quad n\in \N, \; s\in \R. $$
Every $F_n$ belongs to $C^1(X)$, being of the type $x\mapsto \int_0^1 \Psi(x(\xi))d\xi$, with $\Psi\in C^{1+\alpha}_b(\R)$, and
$$\nabla F_n(x)(\xi)  = (\Phi_n' \circ x) (\xi)= \theta' \left(\frac{\Phi (x(\xi))}{n}\right) \Phi '(x(\xi)), \quad n\in N, \; x\in X, \; \xi\in (0, 1). $$
Let us estimate $\|\nabla F_n(x)\|$. For every $x\in X$ we have 
 $ \nabla F_n(x)(\xi) =0$ if $|\Phi (x(\xi))| \geq 2n$. On the other hand,  \eqref{e4.5} implies that $\Phi ^{-1}(-2n, 2n)$ is bounded, say $\Phi ^{-1}(-2n, 2n)\subset (-c_n, c_n)$, so that 
  $| \nabla F_n(x)(\xi)| \leq |\Phi '(x(\xi))| \leq C_1(1+ c_n^r)$ if $|\Phi (x(\xi))| <2n$. Therefore, for every $x\in X$ we have $\|\nabla F_n(x)\|\leq C_1(1+ c_n^r)$, so that $F_n\in C^1_b(X)$. 
 
Since $\lim_{n\to \infty} \Phi_n(x(\xi)) = \Phi(x(\xi))$ and  $|\Phi_n(x(\xi))| \leq |\Phi(x(\xi))| \leq C_2 (1+ |x(\xi)|^{1+r})$ for a.e. $\xi\in (0, 1)$, by the Dominated Convergence Theorem  we have 
$$\lim_{n\to \infty} F_n(x) = F(x)$$
for every $x\in L^{ r+1}(0,1)$ (so, for $\nu$-a.e $x\in X$ by proposition \ref{normaLp}). Moreover, 
$$| F_n(x) - F(x)|^q\leq (| F_n(x)| + |F(x)| )^q\leq (2C_2 (1+ \|x \|_{L^{1+r}(0,1)}^{1+r}))^q, $$
and since $x\mapsto  \|x \|_{L^{1+r}(0,1)}^{1+r}\in L^q(X, \nu)$ for every $q>1$ by proposition \ref{normaLp}, again by  the Dominated Convergence Theorem we get 
$$\lim_{n\to \infty} \|F_n - F\|_{L^q(X, \nu)} =0. $$
Let us prove that $  \nabla F_n$ converges to the vector field $x\mapsto \Phi'\circ x$ in $L^q(X, \nu;X)$, namely that 
\begin{equation}
\label{derivata}
\lim_{n\to \infty} \int_X \left(\int_0^1 \left| \left( \theta' \left(\frac{\Phi (x(\xi))}{n}\right)-1\right)\Phi '(x(\xi))\right|^2d\xi\right)^{q/2}\nu(dx) =0. 
\end{equation}
For every $x\in L^{2r}(0, 1)$ we have $\lim_{n\to\infty}  \theta'  ( \Phi (x(\xi))/n)  -1=0$ for a.e. $\xi\in (0, 1)$, moreover $|( \theta'  ( \Phi (x(\xi))/n)  -1) \Phi '(x(\xi))|\leq 
2C_1(1+ |x(\xi)|^r) \in L^2 (0,1)$. By the Dominated Convergence Theorem, $\lim_{n\to \infty} 
\int_0^1| ( \theta'  ( \Phi (x(\xi))/n)  -1) \Phi '(x(\xi))|^2 d\xi =0$. Moreover, 
$$\int_0^1\left| \left( \theta' \left(\frac{\Phi (x(\xi))}{n}\right)-1\right)\Phi '(x(\xi))\right|^2d\xi  \leq \int_0^1 (2 C_1(1+ |x(\xi)|^r))^2 d\xi \leq 8C_1^2 (1+ \|x\|_{L^{2r}(0,1)}^{2r}), $$
so that, again by Proposition \ref{normaLp} and by the Dominated Convergence Theorem, \eqref{derivata} holds. Therefore, $F\in W^{1,q}_0(X, \nu)$ and $\nabla_q F(x) =  \Phi'\circ x$, for $\nu$-a.e. $x\in X$. \end{proof}

 By Proposition \ref{Pr:almost}(b)  It follows that for almost all $r\in \R$, the characteristic function of the set $F^{-1}(-\infty, r)$ belongs to $BV_0(X, \nu)$. 
 
 In the paper \cite{TAMS} we proved that the function $F(x) : = \|x\|^2$ belongs to $W^{2,q}(X,\nu)$ for every $q>1$, and it satisfies \eqref{g}. Therefore, by Proposition \ref{surface} the $L^2$-balls $B(0,r)$ have finite perimeter for every $r>0$. 
Taking $\Phi(s): = |s|^p$ with $p>2$, Proposition \ref{palleLp} yields that 
the characteristic function of the $L^p$-balls $B(0,r)\subset L^p(0,1)$ (that are much less regular subsets of $X$ from the topological point of view) belong to $BV_0(X, \nu)$ for almost every $r\in \R$.


 \section{Acknowledgements}
 We thank Michael R\"ockner and Michele Miranda for very useful discussions. 
Our work was partially supported by the research project 2015233N54
 PRIN 2015    ``Deterministic and stochastic evolution equations".

 \appendix

 \section{Proof of  \eqref{commk}. }

\small

\begin{Proposition}
\label{Prop:Appendix}
Let Hypothesis \ref{hyp_conv} hold, and assume in addition that $U\in C^1(X)$ and that $\nabla U$ is Lipschitz continuous. Then  \eqref{commk} holds. 
\end{Proposition}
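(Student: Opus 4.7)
The formal derivation is the heart of the formula. Setting $u(t,x):=T(t)f(x)$, it formally satisfies the Kolmogorov equation $\partial_t u=Ku$ with $K$ given by \eqref{Kcilindrico}. Differentiating formally in the direction $e_k$ and using the commutator
$$
[\partial_{e_k},K]\varphi \;=\; \langle Ae_k,\nabla\varphi\rangle \;-\; \langle D^2U(\cdot)e_k,\nabla\varphi\rangle \;=\; -\tfrac{1}{2\lambda_k}\,\partial_{e_k}\varphi \;-\; \langle D^2U(\cdot)e_k,\nabla\varphi\rangle,
$$
where we exploited the fact that $Ae_k=-\frac{1}{2\lambda_k}e_k$, the function $v_k(t,\cdot):=\partial_{e_k}T(t)f$ should satisfy the inhomogeneous linear equation
$$
\partial_t v_k \;-\; \bigl(K-\tfrac{1}{2\lambda_k}I\bigr) v_k \;=\; -\,\langle D^2U(\cdot)e_k,\nabla T(t)f\rangle, \qquad v_k(0,\cdot)=\partial_{e_k}f.
$$
Since $K-\frac{1}{2\lambda_k}I$ generates the rescaled semigroup $e^{-t/(2\lambda_k)}T(t)$, Duhamel's formula reproduces precisely \eqref{commk}.

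To make the derivation rigorous in infinite dimensions, the plan is to introduce finite-dimensional Galerkin approximations. Writing $P_n$ for the orthogonal projection onto $\mathrm{span}\{e_1,\ldots,e_n\}$, one considers the $P_n(X)$-valued SDE $dX^{(n)}=(AX^{(n)}-P_n\nabla U(X^{(n)}))dt+P_ndW(t)$ and its transition semigroup $T_n(t)$, defined on $C_b(X)$ by $T_n(t)f(x):=\mathbb E[f(X^{(n)}(t,P_nx))]$. In the $n$-dimensional setting, the Lipschitz continuity of $\nabla U$ together with standard parabolic regularity theory on $\R^n$ gives that $T_n(t)f\in C^{1,2}((0,\infty)\times P_n(X))$, so that the commutator computation above is fully justified and the Duhamel argument yields \eqref{commk} at level $n$, with $K$ replaced by $K_n$ and $D^2U$ unchanged on the image of $P_n$.

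The remaining and most delicate step is the passage to the limit $n\to\infty$ in the finite-dimensional identity. One needs: (i) pointwise (or $L^2(X,\nu)$) convergence $T_n(t)f\to T(t)f$ and $T_n(t)\partial_{e_k}f\to T(t)\partial_{e_k}f$, obtained by coupling the SDEs and a Gronwall-type estimate based on the Lipschitz constant of $\nabla U$; (ii) convergence of the derivative flow $DX^{(n)}(t,x)e_k\to DX(t,x)e_k$, available from the linearized equation $d\eta/dt=A\eta-D^2U(X(t,x))\eta$; (iii) a uniform-in-$n$ gradient bound, which here is the key quantitative input. Such a bound---essentially $\|\nabla T_n(s)f\|_\infty\le\|\nabla f\|_\infty$ and, more importantly, an $L^2$-control of $\nabla T_n(s)f$ uniformly in $n$---follows from the convexity of $U$ (and hence of each $U\circ P_n$), which makes the stochastic flow nonexpanding in $x$ and yields $\|DX^{(n)}(s,x)e_k\|\le\|e^{sA}e_k\|\le e^{-s/(2\lambda_k)}$. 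Combined with the Lipschitz bound on $\nabla U$ (hence the essential boundedness of $\|D^2U(\cdot)e_k\|$), the Dominated Convergence Theorem then lets one pass to the limit on both sides of the $n$-th commutation formula and recover \eqref{commk}. The main obstacle is organising these convergences so that the time integral on the right-hand side---which involves $\nabla T(s)f$ uniformly in $s\in[0,t]$---can be controlled; this is precisely where the Lipschitz assumption on $\nabla U$ (unavailable in the example of Subsection treating the reaction--diffusion equation) is used in an essential way.
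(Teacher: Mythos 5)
Your overall strategy (finite-dimensional approximation, a Duhamel formula for the commutator, uniform gradient bounds, passage to the limit) is the same as the paper's, but two steps are genuinely incomplete. First, at the finite-dimensional level you invoke ``standard parabolic regularity'' for the Kolmogorov equation with drift $P_n\nabla U(P_n\cdot)$, which is only Lipschitz; to differentiate the equation along $\xi_k$ and make classical sense of the resulting term $\langle D^2U(\cdot)e_k,\nabla v\rangle$ one needs smoother coefficients. The paper inserts an extra mollification $u^{\eps}_{n}$ of $U\circ P_n$, preserving convexity and the Lipschitz constant $L$, so that $e^{tK_{n,\eps}}$ maps $B_b(\R^n)$ into $C^{2+\alpha}_b(\R^n)$ and the finite-dimensional commutation identity \eqref{commneps} is fully classical. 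This is reparable, but it has to be done.

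Second, and more seriously, your plan to ``pass to the limit on both sides by the Dominated Convergence Theorem'' fails for the term of the Duhamel integral containing the difference of second derivatives of the potentials. Since $\nabla U$ is only Lipschitz, $D^2U$ is an a.e.-defined Gateaux derivative, and $D^2U^{\eps}_{n}(x)$ (built from $U(P_nx)$ and a mollification) has no reason to converge to $D^2U(x)$ pointwise $\nu$-a.e.; no dominating function helps with $\langle (D^2U^{\eps}_{n}-D^2U)e_k,\nabla T(s)f\rangle$. The paper only gets \emph{weak} convergence $U^{\eps}_{n}\rightharpoonup U$ in $W^{2,2}(X,\nu)$ and must pair the second derivatives against a functional continuous on $W^{2,2}(X,\nu)$; this is exactly where the quantitative smoothing estimate $\|Q^{-1/2}\nabla T(t)f\|_{\infty}\le C_3(t+t^{-1/2})\|\nabla f\|_{\infty}$ of \eqref{stimagradQ}(ii) (obtained from a uniform bound on $\|Q^{-1/2}\nabla T_{n,\eps}(t)f\|$ via \eqref{commapproxvett}) enters in an essential way, the factor $\lambda_j^{1/2}$ being what makes $\sum_j(\partial^2\varphi/\partial e_k\partial e_j)\,\partial T(s)f/\partial e_j$ controllable by $\|\varphi\|_{W^{2,2}(X,\nu)}$. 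Your proposal contains neither this weighted gradient estimate nor any substitute for it, so the limit of the right-hand side is not justified; the convexity-based bound $\|\nabla T_n(s)f\|_{\infty}\le\|\nabla f\|_{\infty}$ that you do identify is necessary but not sufficient. As a side remark, your proposed convergence of the approximating semigroups via SDE coupling is a legitimate alternative to the paper's variation-of-constants comparison of $T(t)$ and $T_{n,\eps}(t)$ and would deliver analogues of \eqref{convsemigruppi} and \eqref{convgradienti}.
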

\begin{proof}
The formal derivation of \eqref{commk} is easy: setting $v(t,x):=(T(t)f)(x)$, $v $ satisfies $\partial v/\partial t = Kv$. Taking the derivative along $e_k$ and using \eqref{Kcilindrico} we should get
$$\frac{\partial }{\partial t} \frac{\partial }{\partial e_k} v = \frac{\partial }{\partial e_k}Kv = K\left(  \frac{\partial v}{\partial e_k} \right) - \frac{1}{2\lambda_k}   \frac{\partial v}{\partial e_k}  - \langle D^2Ue_k, \nabla v\rangle$$
which yields \eqref{commk}. However, we do not know whether $\partial T(t)f/\partial e_k$ is time differentiable and belongs to $D(K)$, even for $f\in \mathcal {FC}^{\infty}_b(X)$. To justify  \eqref{commk} we follow a tedious approximation procedure,  already used in \cite[Sect. 3.2.2]{DPL} to prove regularity results for the elements of $D(K)$,  that allows to use regularity results for PDEs in finite dimension.

Let $L$ be such that 
$$\|\nabla U(x)-\nabla U(y)\| \leq L\|x-y\|, \quad x, \;y\in X. $$
We approach $U$ by  $U_n(x) := U(P_nx)$, where $P_n$ is defined by \eqref{Pn}. 

The function  
$$u_n: \R^n\mapsto \R, \quad u_n(\xi): = U_n\left(\sum_{i=1}^n \xi_ie_i\right)$$ 
is such that  $U_n(x) = u_n(  \langle x, e_1\rangle, \ldots  \langle x, e_n\rangle)$,   it is convex and it has Lipschitz continuous gradient as well as $U$. This is not enough for our aims, and we approach it again by 
$$u^{\eps}_{n}(\xi) = \int_{\R^n} u_n(\xi -\eps y)\theta_n(y)dy,  \quad \xi\in \R^n, $$
where $\theta_n: \R^n\mapsto \R$ is any smooth nonnegative compactly supported function with $\int_{\R^n} \theta_n(y)dy =1$. Then  $u^{\eps}_{n}$ is smooth and convex,   $\nabla u^{\eps}_{n}$ is Lipschitz continuous with  Lipschitz constant $\leq L$, so that   $\|D^2 u^{\eps}_{n}(\xi)\|_{{\mathcal L}(\R^n)} \leq L$ for every $n\in \N$,  $\eps>0$, $\xi\in \R^n$. 
We introduce the differential operator $K_{n, \eps}$ defined by 
$$K_{n, \eps}\varphi(\xi) = {\mathcal L}_n \varphi(	xi) -  \langle \nabla u^{\eps}_{n}(\xi), \nabla \varphi (\xi)\rangle, \quad \xi\in \R^n, $$
where $ {\mathcal L}_n$  is the Ornstein-Uhlenbeck operator 
$${\mathcal L}_n \varphi (\xi) =  \frac{1}{2} \sum_{i=1}^n \left( \frac{\partial^2  \varphi}{\partial \xi_i^2}(\xi) -  \lambda_i^{-1}\xi_i  \frac{\partial  \varphi}{\partial \xi_i} (\xi)\right), \quad \xi \in \R^n.$$
and we consider the
Cauchy problem  in $\R^n$, 
\begin{equation}
\label{Tneps(t)}
\left\{ \begin{array}{l}
\ds \frac{\partial}{\partial t}v_{n,  \eps}(t, \xi) =K_{n, \eps}v_{n,  \eps}(t, \xi)\rangle , \quad t>0, 
\\
\\
v_n(0, \xi) = v_0(\xi)
\end{array}\right. 
\end{equation}
where $v_0\in B_b(\R^n)$ (the space of the Borel bounded functions).   Since the  drift coefficients are globally Lipschitz, and the dissipativity condition  
$$- \sum_{i,j=1}^n\left( \delta_{ij} \lambda_i^{-1}+  \frac{\partial  ^2u^{\eps}_{n}}{\partial \xi_i\partial\xi_j}(\xi) \right)\eta_i \eta_j \leq 0, \quad \xi, \;\eta\in \R^n$$
holds, the regularity results of \cite[Ch. 1]{Cerrai} and of \cite{LV1} yield existence of a Markov, strong Feller semigroup $e^{tK_{n, \eps}}$ that maps $B_b(\R^n)$ into
$ C^{2+\alpha}_b(\R^n)$ for every  $t>0$, $\alpha \in (0,1)$. Moreover, for $v_0\in C_b(\R^n)$, $ v_n(t, \xi):= (e^{tK_{n, \eps}}  v_0) (\xi)$ is the unique bounded classical solution 
to \eqref{Tneps(t)}, and since all the coefficients of $K_{n, \eps}$  are smooth,   $(t,\xi) \mapsto (e^{tK_{n, \eps}} v_0)(\xi)\in C^{\infty}((0, +\infty)\times \R^n)$  by the classical local regularity results in parabolic equations. So, for  $t>0$ both sides of the equation in \eqref{Tneps(t)} are differentiable with respect to the space variables, and  
$$  \frac{\partial}{\partial \xi_k}\frac{\partial  v_n}{\partial t} = \frac{\partial}{\partial \xi_k}\bigg( {\mathcal L}_nv(t, \xi) - \langle \nabla u^{\eps}_{n}(\xi), \nabla_{\xi} v_n(t, \xi)\rangle\bigg)$$
namely
$$\frac{\partial }{\partial t}  \frac{\partial}{\partial \xi_k} v_n = \left( {\mathcal L}_n  - \frac{1}{2\lambda_k}\right) \frac{\partial}{\partial \xi_k} v_n  - \langle \nabla u^{\eps}_{n}(\xi), \nabla_{\xi}\frac{\partial}{\partial \xi_k}v_n(t, \xi)\rangle - 
\sum_{i=1}^n \left(  \frac{\partial^2  }{\partial \xi_i\partial \xi_k}u^{\eps}_{n}(\xi) \right) \frac{\partial  }{\partial \xi_i} v_n . $$
Therefore, if  $v_0\in C^1_b(\R^n)$, 
\begin{equation}
\label{commneps}
 \frac{\partial}{\partial \xi_k}(e^{tK_{n, \eps}} v_0) - e^{-t/2\lambda_k}e^{tK_{n, \eps}}  \frac{\partial v_0}{\partial \xi_k}= - \int_0^t e^{-(t-s)/2\lambda_k}e^{(t-s)tK_{n, \eps}}  (   \langle D^2u^{\eps}_{n} \cdot \nabla e^{sK_{n, \eps}} v_0, e_k \rangle )\,ds. 
\end{equation}

Now we go back from  $\R^n$ to $X$, setting $f_n(\xi) = f(\sum_{i=1}^n\xi_i e_i) $  for $f\in B_b(X)$,  and 
$$(T_{n, \eps}(t) f )(x) := (e^{tK_{n, \eps}} f_n)(\langle x, e_1\rangle, \ldots, \langle x, e_n\rangle). $$
Then, $T_{n, \eps}(t) $ is a Markov strong Feller semigroup, and it maps $B_b(X)$ into $\mathcal{FC}^2_b(X)\cap C^{\infty}(X)$. 

We are going to establish estimates on $T_{n, \eps}(t) f $ that yield estimates on $\nabla T(t)f$ and convergence results. 
First of all,  for  $v_0\in C^1_b(\R^n)$ we have (\cite[Sect. 3.2.2]{DPL})  
\begin{equation}
\label{gradgrad}
\|\nabla e^{tK_{n, \eps}}  v_0\|_{\infty} \leq \|\nabla v_0\|_{\infty}, \quad t>0, \; v_0\in C^1_b(\R^n). 
\end{equation}
Taking into account the dissipativity condition, the procedure of \cite[Prop. 2.6]{MS} with $\Lambda = \R^n$, $V\equiv 0$ gives (with the choice $a= 1$)
$$\|\nabla e^{tK_{n, \eps}}  v_0\|_{\infty} \leq    \frac {1}{\sqrt{t}} \|v_0\|_{\infty}, \quad t>0, \; v_0\in C_b(\R^n), $$
so that, by the semigroup law and \eqref{gradgrad}, 
\begin{equation}
\label{funzgrad}
\|\nabla e^{tK_{n, \eps}}  v_0\|_{\infty} \leq  \max\left\{ 1,  \frac {1}{\sqrt{t}} \right\} \|v_0\|_{\infty}, \quad t>0, \; v_0\in B_b(\R^n), \quad t>0. 
 \end{equation}
By \eqref{gradgrad}, for $f\in C^1_b(X)$ we have
\begin{equation}
\label{gradienteTNeps}
\|\nabla T_{n, \eps}(t) f\|_{\infty} \leq \|\nabla f\|_{\infty}, \quad t>0, 
\end{equation}
and by \eqref{funzgrad} for $f\in B_b(X)$ we have
\begin{equation}
\label{funzgradTNeps}
\|\nabla T_{n, \eps}(t) f\|_{\infty} \leq \max\left\{ 1,  \frac {1}{\sqrt{t}} \right\}\|f\|_{\infty}, \quad t>0. 
\end{equation}
Moreover, for $k>n$ we have $\frac{\partial }{\partial e_k}T_{n, \eps}(t) f =0$, while for  $k\leq n$ formula  \eqref{commneps} yields
\begin{equation}
\label{commapprox}
\frac{\partial }{\partial e_k}T_{n, \eps}(t) f - e^{-t/2\lambda_k}T_{n, \eps}(t) \frac{\partial f}{\partial e_k}  = - \int_0^t e^{-(t-s)/2\lambda_k}T_{n, \eps}(t-s) (   \langle D^2U^{\eps}_{n} \cdot\nabla T_{n, \eps}(s)f, e_k \rangle )\,ds 
\end{equation}
where 
$$U^{\eps}_{n} (x) := u^{\eps}_{n}(\langle x, e_1\rangle,   \ldots, \langle x, e_n\rangle). $$
Since   $\|D^2 u^{\eps}_{n}(\xi)\|_{{\mathcal L}(\R^n)} \leq L$ for every $\xi\in \R^n$, 
\begin{equation}
\label{stimaD2U}
\|D^2 U^{\eps}_{n}(x)\|_{{\mathcal L}(X)} \leq L, \quad \eps >0, \; n\in \N, \; x\in X. 
\end{equation}
We rewrite \eqref{commapprox} as 
\begin{equation}
\label{commapproxvett}
P_n( \nabla T_{n, \eps}(t) f )(x) = e^{-tA}({\bf T}_{n, \eps}(t)P_n\nabla f (x) )  - \int_0^t e^{-(t-s)A} ({\bf T}_{n, \eps}(t-s) ( P_n D^2U^{\eps}_{n} \cdot\nabla T_{n, \eps}(s)f ) )(x)\,ds . \end{equation}
At every $x\in X$, both  sides of \eqref{commapproxvett} belong to $D(Q^{-1/2})= D(-2A)^{1/2}$. Using the estimate $\|(-2A)^{1/2} e^{tA}\|_{{\mathcal L}(X)} \leq C(1+ t^{-1/2})$ and \eqref{gradienteTNeps}, \eqref{stimaD2U}, for $f\in C^1_b(X)$  we get 
$$\|Q^{-1/2} e^{-tA}({\bf T}_{n, \eps}(t) P_n \nabla f (x))\|_X \leq C(1+ t^{-1/2})\| {\bf T}_{n, \eps}(t)P_n \nabla f (x)\|_X \leq C(1+ t^{-1/2})  \|\nabla f\|_{\infty}, $$
$$\begin{array}{l}
\ds \| Q^{-1/2}  \int_0^t e^{-(t-s)A} {\bf T}_{n, \eps}(t-s) (  P_n  D^2U^{\eps}_{n} \cdot\nabla T_{n, \eps}(s)f )(x) \,ds \|
\\
\\
\ds \leq  \int_0^t C(1+ (t-s)^{-1/2})  \|  \langle D^2U^{\eps}_{n} \cdot\nabla T_{n, \eps}(s)f\|_{\infty} ds
 \leq  \int_0^t C(1+ (t-s)^{-1/2}) L \|\nabla  f\|_{\infty} ds .
 \end{array}$$
Summing up, and recalling that $(I-P_n) ( \nabla T_{n, \eps}(t) f ) \equiv 0$, we obtain 
\begin{equation}
\label{QgradienteTNeps}
\|Q^{-1/2}\nabla T_{n, \eps}(t) f(x)\|  \leq C_1(t+t^{-1/2}) \|\nabla f\|_{\infty}, \quad t>0, \;x\in X, \; f\in C^1_b(X), 
\end{equation}
with $C_1$ independent of $n$ and $\eps$.

Now we let  $n\to\infty$, $\eps\to0$.  Since,  for every $f\in C_b(X)$ and $t>0$,  $x\mapsto (T_{n, \eps}(t)f)(x)$ is in $\mathcal{FC}^2_b(X)$ and it depends only on $\langle x, e_1\rangle,   \ldots, \langle x, e_n\rangle$, it belongs to $D(K)$ and by \eqref{Kcilindrico} we have
$$\frac{d}{dt} (T(t)f - T_{n, \eps}(t)f) = K(T(t)f - T_{n, \eps}(t)f) - \sum_{i=1}^n \left( \frac{\partial U}{\partial e_i}-\frac{\partial U^{\eps}_{n} }{\partial e_i}\right)\frac{\partial}{\partial e_i} T_{n, \eps}(t)f, \quad t>0$$
so that 
$$ T(t)f - T_{n, \eps}(t)f = -\int_0^t T(t-s)\left( \sum_{i=1}^n \left(\frac{\partial U}{\partial e_i}-\frac{\partial U^{\eps}_{n}}{\partial e_i} \right)\frac{\partial}{\partial e_i} T_{n, \eps}(s)f\right)ds, $$
and 
$$\frac{\partial }{\partial e_k}(  T(t)f - T_{n, \eps}(t)f) = -  \frac{\partial }{\partial e_k}\int_0^tT(t-s)\left(\sum_{i=1}^n \left(\frac{\partial U}{\partial e_i}-\frac{\partial U^{\eps}_{n} }{\partial e_i} \right)\frac{\partial }{\partial e_i}T_{n, \eps}(s)f\right)ds, $$
which we rewrite as 
$$P_n \nabla ( T(t)f - T_{n, \eps}(t)f) =  - P_n \nabla  \int_0^t T(t-s)( \langle P_n (\nabla U - \nabla U^{\eps}_{n} ), \nabla T_{n, \eps}(s)f  ) ds.$$

We recall that  $T(t)$ is a contraction semigroups in $L^2(X, \nu)$. Using  \eqref{funzgradTNeps} we get 
\begin{equation}
\label{stimaconvsgr}
\begin{array}{l}
\| T(t)f - T_{n, \eps}(t)f \|_{L^2(X, \nu)} \leq \ds \int_0^t \| \nabla U -\nabla U^{\eps}_{n} \|_{L^2(X, \nu;X)}\| \nabla T_{n, \eps}(s)f\|_{L^2(X, \nu;X)} ds
\\
\\
\leq C_2(t+t^{1/2}) \|  f\|_{\infty} \| \nabla U -\nabla U^{\eps}_{n} \|_{L^2(X, \nu;X)},  
\end{array}
\end{equation}
with $C_2$ independent of $n$ and $\eps$. Using  \eqref{stimabase} and \eqref{gradienteTNeps} we get 
$$\begin{array}{l}
\ds \|P_n \nabla ( T(t)f - T_{n, \eps}(t)f) \|_{L^2(X, \nu;X)} \leq \ds \int_0^t \frac{C}{\sqrt{t-s}}\| \nabla U -\nabla U^{\eps}_{n} \|_{L^2(X, \nu;X)}\| \nabla T_{n, \eps}(s)f\|_{L^2(X, \nu;X)} ds
\\
\\
\leq C \sqrt{t}  \|\nabla f\|_{\infty} \| \nabla U -\nabla U^{\eps}_{n} \|_{L^2(X, \nu;X)}. 
\end{array}$$
So, we estimate 
$$\begin{array}{l} \| \nabla U -\nabla U^{\eps}_{n} \|_{L^2(X, \nu;X)} \leq  \| \nabla U -\nabla U_{n} \|_{L^2(X, \nu;X)} +  \| \nabla U_n -\nabla U^{\eps}_{n} \|_{L^2(X, \nu;X)}
\\
\\
\leq \ds  \left(\int_X \|\nabla U(x) - \nabla U(P_nx)\|^2d\nu\right)^{1/2} +   \| \nabla U_n -\nabla U^{\eps}_{n} \|_{L^{\infty}(X, \nu;X)}
\\
\\
\leq \ds  \left(\int_X \|\nabla U(x) - \nabla U(P_nx)\|^2d\nu\right)^{1/2} + \eps L . 
\end{array}$$
Since  $\nabla U$ is continuous and it has at most linear growth at infinity, the first term vanishes as  $n\to\infty$ by the Dominated Convergence Theorem. Therefore, 
$\lim_{(n, \eps) \to (0, \infty)}  \| \nabla U -\nabla U^{\eps}_{n} \|_{L^2(X, \nu;X)} =0$, and \eqref{stimaconvsgr} yields
\begin{equation}
\label{convsemigruppi}
\lim_{(n, \eps )\to (\infty, 0)} \| T(t) - T_{n, \eps}(t) \|_{{\mathcal L}(C_b(X), L^2(X, \nu))} = 0, \quad t>0. 
\end{equation}
%
%
%
Moreover,  since 
$$\|  \nabla ( T(t)f - T_{n, \eps}(t)f) \|_{L^2(X, \nu;X)} ^2 = \|P_n \nabla ( T(t)f - T_{n, \eps}(t)f) \|_{L^2(X, \nu;X)} ^2 +   \|(I-P_n) \nabla   T(t)f\|_{L^2(X, \nu;X)} ^2, $$
where $\lim_{n\to\infty} \|(I-P_n) \nabla   T(t)f\|_{L^2(X, \nu;X)}=0$, 
we also have
\begin{equation}
\label{convgradienti}
\lim_{(n, \eps )\to (\infty, 0)} \| \nabla  T(t)f - \nabla T_{n, \eps}(t)f \|_{L^2(X, \nu; X)} =0, \quad t>0, \; f\in C_b(X). 
\end{equation}

Let us use \eqref{convgradienti} to get bounds for $\nabla T(t)f$, when $f\in C^1_b(X)$. 
By \eqref{convgradienti}, there exists a sequence $(\nabla T_{n_k, \eps_k}(t)f(x))$ with $n_k\to \infty$, $\eps_k\to 0$, that converges to $\nabla T(t)f(x)$ for $\nu$-a.e.$x\in X$. For such $x$'s \eqref{gradienteTNeps} yields 
$\| \nabla T(t)f(x)\|\leq \|\nabla f\|_{\infty}$. %
Still for such $x$'s, 
the sequence 
 $(\nabla T_{n_k, \eps_k}(t) f(x))$ is bounded in $D(Q^{-1/2})$ by  \eqref{QgradienteTNeps}, and since $D(Q^{-1/2})$ is a Hilbert space, up to a further subsequence $(\nabla T_{n_k, \eps_k}(t) f(x))$ weakly converges to some  $h\in D(Q^{-1/2})$; since $X-\lim_{k\to\infty} \nabla T_{n_k, \eps_k}(t) f(x) =  \nabla T (t)f(x)$, we have $h= 
 \nabla T (t)f(x) \in D(Q^{-1/2})$ and 
$$\|  \nabla T (t)f(x)\|_{D(Q^{-1/2})}  \leq \liminf_{k\to \infty} \|\nabla T_{n_k, \eps_k}(t) f(x)\|_{D(Q^{-1/2})} \leq \|\nabla f\|_{\infty}+
   C_2(t+t^{-1/2}) \|\nabla f\|_{\infty}. $$
Therefore,  for every $t>0$ and $f\in C^1_b(X)$, 
\begin{equation}
\label{stimagradQ}
\left\{\begin{array}{ll}
(i) &  \|\nabla T(t)f\|_{\infty} \leq \|\nabla f\|_{\infty} , 
\\
\\
 (ii)& \|Q^{-1/2}\nabla T(t)f\|_{\infty} \leq C_3(t + t^{-1/2})\|\nabla f\|_{\infty} ,
 \end{array}\right. 
 \end{equation}
for some $C_3>0$. 

Let us go back to  \eqref{commapprox}. 
Using \eqref{convsemigruppi} and \eqref{convgradienti}, the left hand side of  \eqref{commapprox} converges to the left hand side of \eqref{commk}
in $L^2(X, \nu)$ as $n\to\infty$, $\eps \to 0$. The difference between the respective right hand sides is split as 
$$\begin{array}{l}
\ds  \int_0^t e^{-(t-s)/2\lambda_k}(T_{n, \eps}(t-s)- T (t-s))(  \langle D^2U^{\eps}_{n} \cdot\nabla T_{n, \eps}(s)f, e_k \rangle )\,ds 
\\
\\
\ds +  
\int_0^t e^{-(t-s)/2\lambda_k}T (t-s) (   \langle D^2U^{\eps}_{n}   \cdot( \nabla T_{n, \eps}(s)f -\nabla T(s)f), e_k \rangle )\,ds 
\\
\\
\ds + 
\int_0^t e^{-(t-s)/2\lambda_k}T (t-s) (   \langle (D^2U^{\eps}_{n}   -D^2U)  \cdot \nabla T(s)f, e_k \rangle )\,ds 
\\
\\
:= I^{(1)}_{n, \eps}(t) +  I^{(2)}_{n, \eps} (t) + I^{(3)}_{n, \eps} (t). $$
\end{array}$$
By \eqref{convsemigruppi}, \eqref{stimaD2U}, and \eqref{gradienteTNeps},  for every $s\in (0, t)$ we have 
$$\lim_{(n, \eps) \to (\infty, 0)} \|(T_{n, \eps}(t-s)- T (t-s))(  \langle D^2U^{\eps}_{n} \cdot \nabla T_{n, \eps}(s)f, e_k \rangle )\|_{L^2(X, \nu)} =0. $$
Moreover, for every $n$ and $\eps$ we have, by  \eqref{stimaD2U} and \eqref{gradienteTNeps},  and recalling that $T_{n, \eps}(t)$, $T(t)$ are contraction semigroups in $C_b(X)$, 
$$\|(T_{n, \eps}(t-s)- T (t-s))(  \langle D^2U^{\eps}_{n} \cdot\nabla T_{n, \eps}(s)f, e_k \rangle )\|_{L^{\infty}(X, \nu)} \leq L \|\nabla f\|_{\infty}. $$
By the Dominated Convergence Theorem, 
$$ \lim_{(n, \eps) \to (\infty, 0)}  I^{(1)}_{n, \eps}(t) =0. $$
Similarly, by \eqref{stimaD2U} and \eqref{convgradienti}, for every $s\in (0, t)$ we have 
$$\lim_{(n, \eps) \to (\infty, 0)} \| T (t-s) (   \langle D^2U^{\eps}_{n}   \cdot( \nabla T_{n, \eps}(s)f -\nabla T(s)f), e_k \rangle )\|_{L^2(X, \nu)} =0, $$
while for every $n$ and $\eps$ we have, recalling that $T(t)$ is a contraction semigroup in $L^2(X, \nu)$ and using \eqref{stimaD2U},  \eqref{gradienteTNeps} and \eqref{stimagradQ}(i), 
$$\| T (t-s) (   \langle D^2U^{\eps}_{n}   \cdot( \nabla T_{n, \eps}(s)f -\nabla T(s)f), e_k \rangle )\|_{L^2(X, \nu)} \leq 2L  \|\nabla f\|_{\infty}, $$
and again by the Dominated Convergence Theorem, 
$$ \lim_{(n, \eps) \to (\infty, 0)}  I^{(2)}_{n, \eps} (t)=0. $$

Concerning $  I^{(3)}_{n, \eps}(t)$, we prove that it converges weakly to $0$ in $L^2(X, \nu)$. By \eqref{stimaD2U}, $(U^{\eps}_{n} )$ is bounded in $W^{2,2}(X, \gamma)$,  and therefore in $W^{2,2}(X, \nu)$, by a constant independent of $n$ and $\eps$. A sequence $(U^{\eps_k}_{n_k} )$ converges weakly in $W^{2,2}(X, \nu)$, and since $U^{\eps}_{n} \to U$ as 
$(n, \eps)\to  (\infty, 0)$, the weak limit is $U$ and we have $U^{\eps}_{n} \rightharpoonup U$ in $W^{2,2}(X, \nu)$, as 
$(n, \eps)\to  (\infty, 0)$. For every $\psi\in L^2(X, \nu)$, the functional 
$$\varphi\mapsto F(\varphi) := \int_X \int_0^t e^{-(t-s)/2\lambda_k}T (t-s) \left(   \sum_{j=1}^{\infty}  \frac{\partial ^2\varphi}{\partial e_k \partial e_j}  \frac{\partial T(t)f}{\partial e_j }  \right) \,ds \,\psi\,d\nu $$
belongs to the dual space of $W^{2,2}(X, \nu)$, since 
$$\begin{array}{lll}
|F(\varphi)| & \leq & \ds \int_0^t \|  \sum_{j=1}^{\infty}  \frac{\partial ^2\varphi}{\partial e_k \partial e_j}  \frac{\partial T(s)f}{\partial e_j  }\|_{L^2(X, \nu)} ds \|\psi\|_{L^2(X, \nu)}
\\
\\
& \leq & \ds \int_0^t  \left(\int_X   \sum_{j=1}^{\infty}  \left(\frac{\partial ^2\varphi}{\partial e_k \partial e_j}\right) ^2 \lambda_j\,d\nu\right)^{1/2} \|Q^{-1/2}\nabla T(s)\|_{L^2(X, \nu;X)}
 ds \, \|\psi\|_{L^2(X, \nu)}
\\
\\
& \leq & \ds   \int_0^t  C_3(s+ s^{-1/2})ds\, \|\nabla f\|_{\infty} \|\varphi\|_{W^{2,2}(X, \nu)}  \|\psi\|_{L^2(X, \nu)}
\end{array}$$
where we used estimate  \eqref{stimagradQ}(ii). Therefore, $ \lim_{(n, \eps) \to (\infty, 0)} F(U^{\eps}_{n}) - F(U) =0$, namely for every $\psi\in L^2(X, \nu)$ we have 
$ \lim_{(n, \eps) \to (\infty, 0)} \int_X  I^{(3)}_{n, \eps}(t)\, \psi\, d\nu =0$. So, $ I^{(3)}_{n, \eps}(t)$ weakly converges to $0$ as 
$(n, \eps) \to (\infty, 0)$. Summing up, the right hand side of  \eqref{commapprox}  weakly converges to the left hand side of \eqref{commk}
in $L^2(X, \nu)$ as $(n, \eps) \to (\infty, 0)$, and   \eqref{commk} follows. 
\end{proof}

\end{document}